\newtheorem{theorem}{Theorem}[section] 
\newtheorem{lemma}[theorem]{Lemma} 
\newtheorem{proposition}[theorem]{Proposition} 
\newtheorem{observation}[theorem]{Observation} 
\newtheorem{crlem}[theorem]{Crucial Lemma}
\newtheorem{crcor}[theorem]{Crucial Corollary}
\theoremstyle{definition}
\newtheorem{definition}[theorem]{Definition}
\newtheorem{discussion}[theorem]{Discussion}
\theoremstyle{remark}
\newtheorem{remark}[theorem]{Remark}
\newtheorem{notation}[theorem]{Notation}
\newtheorem{question}[theorem]{Question}
\newcommand{\cA}{{\mathcal A}}
\newcommand{\bA}{{\mathbf A}}
\newcommand{\bB}{{\mathbf B}}
\newcommand{\cB}{{\mathcal B}}
\newcommand{\gc}{{\mathfrak c}}
\newcommand{\gd}{{\mathfrak d}}
\newcommand{\cF}{{\mathcal F}}
\newcommand{\bG}{{\mathbf G}}
\newcommand{\cG}{{\mathcal G}}
\newcommand{\cH}{{\mathcal H}}
\newcommand{\bH}{{\mathbf H}}
\newcommand{\bh}{{\mathbf h}}
\newcommand{\cI}{{\mathcal I}}
\newcommand{\bi}{{\mathbf i}}
\newcommand{\bj}{{\mathbf j}}
\newcommand{\bbL}{{\mathbb L}}
\newcommand{\bbN}{{\mathbb N}}
\newcommand{\cP}{{\mathcal P}}
\newcommand{\bbP}{{\mathbb P}}
\newcommand{\bbQ}{{\mathbb Q}}
\newcommand{\bS}{{\mathbf S}}
\newcommand{\bT}{{\mathbf T}}
\newcommand{\cT}{{\mathcal T}}
\newcommand{\bV}{{\mathbf V}}
\newcommand{\cY}{{\mathcal Y}}
\newcommand{\Th}{{\rm Th}}
\newcommand{\PA}{{\rm PA}}
\newcommand{\pr}{{\rm pr}}
\newcommand{\BA}{{\rm BA}}
\newcommand{\OP}{{\rm OP}}
\newcommand{\OB}{{\rm OB}}
\newcommand{\otp}{{\rm otp}}
\newcommand{\Per}{{\rm Per}}
\newcommand{\arcl}{\mbox{\rm ar-cl}}
\newcommand{\rest}{{\restriction}}
\newcommand{\nor}{{\rm nor}}
\newcommand{\val}{{\rm val}}
\newcommand{\dis}{{\rm dis}}
\newcommand{\set}{{\rm set}}
\newcommand{\pos}{{\rm pos}}
\newcommand{\vpos}{{\rm vpos}}
\newcommand{\wpos}{{\rm wpos}}
\newcommand{\xpos}{{\rm xpos}}
\newcommand{\ypos}{{\rm ypos}}
\newcommand{\suc}{{\rm suc}}
\newcommand{\CR}{{\rm CR}}
\newcommand{\supp}{{\rm supp}}
\newcommand{\lh}{\ell g} 
\newcommand{\conc}{{}^\frown\!}
\newcommand{\proj}{{\rm proj}}
\def\mathunderaccent#1#2 {\let\theaccent#1\skewfactor#2
\mathpalette\putaccentunder}
\def\putaccentunder#1#2{\oalign{$#1#2$\crcr\hidewidth
\vbox to.2ex{\hbox{$#1\skew\skewfactor\theaccent{}$}\vss}\hidewidth}}
\def\name{\mathunderaccent\tilde-3 }
\begin{document}

\keywords{Models of Peano arithmetic, end extensions, forcing with creatures} 
\subjclass[msc2000]{03C62 03E35 03E40}

\title[Models of Expansions of ${\mathbb N}$]{Models of Expansions of
  ${\mathbb N}$ with no end extensions}  
\author{Saharon Shelah}
\address{Einstein Institute of Mathematics\\
Edmond J. Safra Campus, Givat Ram\\
The Hebrew University of Jerusalem\\
Jerusalem, 91904, Israel\\
 and  Department of Mathematics\\
 Rutgers University\\
 New Brunswick, NJ 08854, USA}
\email{shelah@math.huji.ac.il}
\urladdr{http://shelah.logic.at}

\thanks{I would like to thank Alice Leonhardt for the beautiful
typing. The author acknowledges support from the United States-Israel
Binational Science Foundation (Grant no. 2002323). Publication 937.}

\begin{abstract}  
  We deal with models of Peano arithmetic (specifically with a question of
  Ali Enayat). The methods are from creature forcing.  We find an expansion
  of $\bbN$ such that its theory has models with no (elementary) end
  extensions.  In fact there is a Borel uncountable set of subsets of $\bbN$
  such that expanding $\bbN$ by any uncountably many of them suffice. 
  Also we find arithmetically closed $\cA$ with no ultrafilter on it
  with suitable definability demand (related to being Ramsey).
\end{abstract}

\maketitle

\section{Introduction}
Recently, solving a long standing problem on models of Peano arithmetic,
(appearing as Problem 7 in the book \cite{KoSc06}), Ali Enayat proved (and
other results as well): 

\begin{theorem}
\label{q0.1} 
[See \cite{Ena08}]
For some arithmetically closed family $\cA$ of subsets of $\omega$, the
model $\bbN_{\cA}=(\bbN,A)_{A \in {\cA}}$ has no conservative extension
(i.e., one in which the intersection of any definable subset with $\bbN$
belongs to ${\cA}$). 
\end{theorem}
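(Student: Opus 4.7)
The plan is to exploit the classical correspondence between conservative elementary end extensions of $\bbN_\cA$ and sufficiently definable ultrafilters on the Boolean algebra $\cA$, and then to construct $\cA$ carrying no such ultrafilter. Concretely, if $M$ is a conservative elementary end extension of $\bbN_\cA$ and $c$ is any element of $M \setminus \bbN$, then $\cF_c := \{A \in \cA : M \models c \in A\}$ is a nonprincipal ultrafilter on $\cA$ which is \emph{$\cA$-Ramsey}, in the sense that every partition $F:[\bbN]^2 \to 2$ coded in $\cA$ admits a homogeneous set in $\cF_c$ (and likewise for higher-dimensional partitions in $\cA$). Conversely, any such $\cA$-Ramsey ultrafilter yields, via a sky/ultrapower construction, a conservative elementary end extension. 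The task thus reduces to building an arithmetically closed $\cA \subseteq \cP(\bbN)$ admitting no $\cA$-Ramsey ultrafilter.

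I would build $\cA = \bigcup_{\alpha < \omega_1} \cA_\alpha$ by transfinite recursion: start with $\cA_0$ the family of arithmetic subsets of $\bbN$, take unions at limits, and at each successor stage $\alpha+1$ add one set designed to defeat the $\alpha$-th candidate ultrafilter in a prearranged enumeration. Given such a candidate filter-base $\cB \subseteq \cA_\alpha$, choose a partition $F_\alpha : [\bbN]^2 \to 2$ that is, in an appropriate genericity sense over $\cA_\alpha$, non-homogeneous on every member of $\cB$; then let $\cA_{\alpha+1}$ be the arithmetic closure of $\cA_\alpha \cup \{F_\alpha\}$. A bookkeeping argument arranges that every potential ultrafilter on the final $\cA$ gets treated at some stage, and hence fails the Ramsey property witnessed by some $F_\alpha$.

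The main obstacle is the feedback between arithmetic closure and diagonalization: closing under arithmetic definability floods in new sets, and one must ensure that none of them accidentally becomes homogeneous for an earlier $F_\beta$, which would resurrect the $\beta$-th candidate. This forces each $F_\alpha$ to be chosen sufficiently generic over $\cA_\alpha$ so that no set in the arithmetic closure of $\cA_\alpha \cup \{F_\alpha\}$ is monochromatic for any previously added partition. Enayat presumably handles this through a careful coding/genericity argument, effectively using a measure- or category-generic choice of $F_\alpha$ whose arithmetic consequences remain ``combinatorially scattered'' relative to all finitely many earlier partitions simultaneously. Verifying that $\aleph_1$ such steps cohere while exhausting all potential ultrafilters, and that the resulting $\cA$ is still genuinely arithmetically closed rather than a trivial blow-up, is the combinatorial heart of the proof.
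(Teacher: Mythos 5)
The paper does not prove Theorem \ref{q0.1}: it is cited verbatim from Enayat's \cite{Ena08} and used as motivation for Question \ref{q0.4}, which is what the body of the paper actually addresses. So there is no ``paper's own proof'' to compare against; I can only assess the sketch on its own terms.

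The central issue with the sketch is the claimed equivalence. The correct correspondence (essentially Kirby's, and the one this paper itself relies on --- compare Definition~\ref{0z.7}(5) and the way Theorem~\ref{4d.17}(1) is phrased) is between conservative elementary end extensions of $\bbN_\cA$ and \emph{iterable} (= weakly definably closed, = Kirby's ``definable'') ultrafilters on $\cA$, not $\cA$-Ramsey ultrafilters. If $M$ is conservative over $\bbN_\cA$ and $c\in M\setminus\bbN$, then $\cF_c$ is iterable: for a sequence $\langle A_i\rangle$ coded by $B\in\cA$, the set $\{i: A_i\in\cF_c\}=\{i:\pr(c,i)\in B^M\}$ is $M$-definable, so its trace on $\bbN$ lies in $\cA$. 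But there is no reason $\cF_c$ is Ramsey: given $F:[\bbN]^2\to 2$ in $\cA$, conservativity puts $Y_i=\{n\in\bbN: F^M(n,c)=i\}$ into $\cA$, yet nothing forces $Y_i$ to be $F$-homogeneous, nor forces $Y_i\in\cF_c$. Since $2$-Ramsey is strictly stronger than iterable (the paper explicitly separates the two notions in the discussion following Question~\ref{q0.7}), diagonalizing so as to destroy every Ramsey ultrafilter does not rule out an iterable, non-Ramsey ultrafilter, which would still yield a conservative extension via the $\cA$-definable ultrapower. The diagonalization must therefore be aimed at iterability directly, which is a harder target: one has to defeat the $\{i: A_i\in D\}\in\cA$ condition for all coded $\omega$-sequences, not merely produce a single bad $2$-coloring.

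Secondarily, the bookkeeping in a $\bigcup_{\alpha<\omega_1}\cA_\alpha$ construction needs a justification you haven't supplied: ultrafilters on the final $\cA$ are not enumerated in $\omega_1$ steps, so one must argue that any iterable $D$ is already ``reflected'' at some countable stage $\alpha$ (say, via $D\cap\cA_\alpha$) in a way robust to the later arithmetic closures. Without CH even the count of such restricted traces can exceed $\aleph_1$, and one must also guard, as you note but do not resolve, against $\arcl(\cA_\alpha\cup\{F_\alpha\})$ reviving a base that an earlier $F_\beta$ was meant to kill. These are exactly the pressure points that a genericity/independence argument (of the kind the present paper carries out with creature forcing in \S2--\S3) is designed to control; asserting that ``Enayat presumably handles this'' leaves the combinatorial core unproved.
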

Motivated by this result he asked:
\begin{question}
\label{q0.4}
Is there $\cA\subseteq\cP(\omega)$ such that some model of $\Th(\bbN_{\cA})$
has no elementary end extension?
\end{question}
This asks whether the countability demand in the MacDowell-Specker theorem
is necessary. This classical theorem says that if $T$ is a theory in a
countable vocabulary $\tau = \tau_T$ extending $\tau(\bbN)=\{0,1,+,\times\}$
and $T$ contains $\PA(\tau)$, then any model of $T$ has an (elementary) end
extension; Gaifman continues this theorem in several ways, e.g., having
minimal extensions (see \cite{KoSc06} on it). The author \cite{Sh:66}
continues it in another way: we do not need addition and multiplication,
i.e., any model of $T$ has an elementary end extension when $\tau$ is a
countable vocabulary, $\{0,<\} \subseteq \tau$, $T$ is a (first order)
theory in $\bbL(\tau)$, $T$ says that $<$ is a linear order with 0 first,
every element $x$ has a successor $S(x)$, and all cases of the induction
scheme belong to $T$.

Mills \cite{Mil78} prove that there is a countable non-standard model of PA
with uncountable vocabulary such that it has no elementary end extension.

We answer the question \ref{q0.4} positively in \S4, we give a sufficient
condition in \S2 and deal with a relevant forcing in \S3. In fact we get an
uncountable Borel set $\bB\subseteq {\cP}(\bbN)$ such that if $B_\alpha\in
\bB$ for $\alpha<\alpha_*$ are pairwise distinct and $\alpha_*$ is
uncountable, then $\Th(\bbN,B_\alpha)_{\alpha<\alpha_*}$ satisfies the
conclusion. 

Enayat \cite{Ena08} also asked:
\begin{question}
\label{q0.7} 
Can we prove in ZFC that there is an arithmetically closed ${\cA}\subseteq
{\cP}(\omega)$ such that ${\cA}$ carries no minimal ultrafilter?
\end{question}
He proved it for the stronger notion of {\em 2-Ramsey ultrafilter}.  We hope
to deal with the problem later (see \cite{Sh:944}); here we prove that there
is an arithmetically closed Borel set $\bB\subseteq {\cP}(\bbN)$ such that
any expansion $\bbN$ by any uncountably many members of $\bB$ has such a
property, i.e., the family of definable subsets of $\bbN^+$ carry no
{\em 2.5-Ramsey ultrafilter}. 

Note that
\begin{enumerate}
\item[$(*)$]  if $N \ne \bbN$ is a model of PA which has no cofinal minimal
  extension, then on ${\rm StSy}(N)$ there is no  minimal ultrafilter, see
  Definitions \ref{0z.2}, \ref{0z.7}(1). 
\end{enumerate}

Enayat also asks:
\begin{question}
\label{Q2}
For a Borel set $\cA\subseteq {\cP}(\omega)$:
\begin{enumerate}
\item[(a)]  does the model $\bbN_{\cA}$ have a conservative end
  extension?  This is what is answered here (in the light of the previous  
  paragraph). 
\item[(b)] Suppose further that $\cA$ is arithmetically closed. Is $(\cA
  \cap [\omega]^{\aleph_0},\supseteq)$ a proper forcing notion? 
\end{enumerate}
\end{question}
The results here solve \ref{Q2}(a) and the second, \ref{Q2}(b), is solved in 
Enayat-Shelah \cite{EnSh:936}.

Enayat suggests that if we succeed to combine an example for ``${\rm
  StSy}(N)$ has no minimal ultrafilter'' and Kaufman-Schmerl \cite{KaSc84}, 
then we shall solve the ``there is $N$ with no cofinal minimal extension''
(Problem 2 of \cite{KoSc06}).

Note that our claim on the creature forcing gives suitable kinds of Ramsey
theorems.

We thank the audience for comments in the lectures given on it in the 
Rutgers Logic Seminar (October 2007) and later in Jerusalem's Logic Seminar
and we would like to thank the referee for pointing out some gaps, many
corrections and help.

\bigskip
\centerline {$* \qquad * \qquad *$}
\bigskip

\begin{notation}
\label{0z.1} 
\begin{enumerate}
\item As usual in set theory, $\omega$ is the set of natural numbers.
  Let $\pr:\omega\times \omega\longrightarrow \omega$ be the standard
  pairing function (i.e., pr$(n,m) = \binom{n+m}{2} +n$, so one--to--one
  onto two--place function). 
\item Let $\cA$ denote a subset of $\cP(\omega)$.
\item The Boolean algebra generated by $\cA\cup [\omega]^{< \aleph_0}$ will
  be denoted by $\BA(\cA)$.
\item Let $D$ denote a non-principal ultrafilter on $\cA$. When $\cA$ is not
  a sub-Boolean-Algebra of $\cP(\omega)$, this means that $D \subseteq \cA$
  and there is a unique non-principal ultrafilter $D'$ on the Boolean
  algebra $\BA(\cA)$ such that $D=D'\cap\cA$. (In \ref{0z.7} this extension
  makes a difference.)
\item Let $\tau$ denote a vocabulary extending $\tau_{\PA}=\tau_{\bbN} =
  \{0,1,+,\times,<\}$, usually countable. 
\item $\PA_\tau=\PA(\tau)$ is Peano arithmetic for the vocabulary $\tau$. 
\item A model $N$ of $\PA(\tau)$ is {\em ordinary\/} if $N \rest
\tau_{\PA}$ extends $\bbN$; usually our models will be ordinary.
\item $\varphi(N,\bar{a})$ is $\{b:N \models \varphi[b,\bar{a}]\}$, where
  $\varphi(x,\bar{y})\in\bbL(\tau_N)$ and $\bar{a}\in{}^{\ell g(\bar{y})}N$.   
\item $\Per(A)$ is the set (or group) of permutations of $A$. 
\item For sets $u,v$ of ordinals let $\OP_{v,u}$, ``the order preserved
   function from $u$ to $v$'', be defined by: 

$\OP_{v,u}(\alpha) = \beta$ if and only if

$\beta\in v$, $\alpha\in u$ and $\otp(v\cap\beta)=\otp(u\cap\alpha)$. 
\item We say that $u,v \subseteq {\rm Ord}$ form a $\Delta$--system pair
  when $\otp(u)=\otp(v)$ and $\OP_{v,u}$ is the identity on $u\cap v$.
\end{enumerate}
\end{notation}

\begin{definition}
\label{0z.2} 
\begin{enumerate}
\item For $\cA\subseteq\cP(\omega)$ we let 
\[\hspace{-10pt}\arcl(\cA) = \{B \subseteq \omega:B\mbox{ is first order 
definable in }(\bbN,A_1,\ldots,A_n)\mbox{ for some }n<\omega \mbox{ and }
A_1,\ldots,A_n\in\cA\}.\]
The set $\arcl(\cA)$ is called {\em the arithmetic closure of $\cA$}. 
\item For a model $N$ of $\PA(\tau)$ let {\em the standard system of $N$} be  
\[{\rm StSy}(N)=\{\varphi(N',\bar{a})\cap\bbN:\varphi(x,\bar{y})\in
\bbL(\tau) \mbox{ and }\bar{a}\in {}^{\ell g(\bar{y})}N\}\]
for any ordinary model $N'$ isomorphic to $N$. 
\end{enumerate}
\end{definition}

\begin{definition}
\label{0z.7}
\begin{enumerate}
Let $\cA\subseteq \cP(\omega)$.
\item For $h \in {}^\omega \omega$ let ${\rm cd}(h) = \{\pr(n,h(n)):
  n<\omega\}$, where $\pr$ is the standard pairing function of $\omega$, see 
  \ref{0z.1}(1). 
\item An ultrafilter $D$ on $\cA$, is called {\em minimal\/} when: 

if $h \in {}^\omega \omega$ and ${\rm cd}(h)\in\cA$, then for some $X \in D$
we have that $h \rest X$ is either constant or one-to-one.
\item An ultrafilter $D$ on $\cA$ is called {\em Ramsey\/} when: 

if $k<\omega$ and $h:[\omega]^k\longrightarrow \{0,1\}$ and ${\rm cd}(h) \in
\cA$, then for some $X\in D$ we have $h\rest [X]^k$ is constant.  

Similarly we define $k$-Ramsey ultrafilters.
\item $D$ is called {\em 2.5-Ramsey\/} or {\em self-definably closed\/}
  when:  

if $\bar{h}=\langle h_i:i<\omega\rangle$ and $h_i\in {}^\omega(i+1)$ and
${\rm cd}(\bar{h})=\{\pr(i,\pr(n,h_i(n)):i< \omega,n <\omega\}$ belongs to
$\cA$, then for some $g \in {}^\omega \omega$ we have: 

\[{\rm cd}(g)\in\cA\mbox{ and }(\forall i)[g(i)\le i \wedge \{n <
\omega:h_i(n) = g(i)\} \in D];\]
this follows from 3-Ramsey and implies 2-Ramsey. 
\item $D$ is {\em weakly definably closed\/} when: 

if $\langle A_i:i<\omega\rangle$ is a sequence of subsets of $\omega$ and
$\{\pr(n,i):n \in A_i$ and $i <\omega\}\in\cA$, then $\{i:A_i\in D\}\in
\cA$, (follows from 2-Ramsey); Kirby called it ``definable'';  Enayat uses
``iterable''. 
\end{enumerate}
\end{definition}

\begin{definition}
\label{0z.9} 
For $\cA\subseteq \cP(\omega)$ let $\bbN_{\cA}$ be $\bbN$ expanded by a
unary relation $A$ for every $A\in\cA$, so formally it is a
$\tau_{\cA}$--model, $\tau_\cA=\tau_\bbN\cup\{P_A:A\in \cA\}$, but below if
we use $\cA=\{A_t:t\in X\}$, then we actually use $\{P_t:t \in X\}$.
\end{definition}

\begin{definition}
\label{0z.13} 
Let $N$ be a model of $T\supseteq\PA(\tau)$, $\tau = \tau_T$.
\begin{enumerate}
\item We say that $N^+$ is an end extension of $N$ when:
\begin{enumerate}
\item[(a)] $N \prec N^+$, 
\item[(b)] if $a\in N$ and $b\in N^+\setminus N$, then $N^+ \models a < b$. 
\end{enumerate}
\item  We say $N^+$ is a conservative [end] extension of $N$ whenever
(a),(b) hold and
\begin{enumerate}
\item[(c)] if $\varphi(x,\bar{y})\in\bbL(\tau)$, $\bar{b} \in {}^{\ell
    g(\bar{y})} (N^+)$, then $\varphi(N^+,\bar b)\cap N$ is a definable
  subset of $N$. 
\end{enumerate}
\end{enumerate}
\end{definition}

\bigskip
\centerline {$* \qquad * \qquad *$}
\bigskip

\begin{discussion}
\label{0z.19}
We may ask: How is the {\em creature forcing} relevant?  Do we need
Roslanowski--Shelah \cite{RoSh:470}?

The creatures (and creatures forcing) we deal with fit \cite{RoSh:470}, but
instead of CS iteration it suffices for us to use a watered down version of
creature iteration. That is here it is enough to define $\bbQ_u$ for finite
$u \subseteq{\rm Ord}$ such that:
\begin{enumerate}
\item[(a)$_1$]  $\bbQ_u$ is a creature forcing with generic $\langle
  {\name{t}_\alpha}:\alpha \in u\rangle$; this restriction implies that
  cases irrelevant in full forcing where we have to use countable $u$, are
  of interest here; hence we can use creature forcing rather than iterated
  creature forcing. 
\item[(a)$_2$]  In \S3, $\bbQ_u$ is a good enough ${}^\omega
  \omega$--bounding creature forcing, so we have continuous reading of
  names. 
\item[(a)$_3$]  We are used to do it above a countable models $N$ of
  ZFC$^-$, and this seems more transparent. But actually asking on the
  $\Delta_n$--type of the generic over $\bbN$ suffices. That is, we can,
  e.g., by $\Delta_{n+7}$ formula over $\bbN$ find, e.g., a condition
  $p\in\bbQ_u$ such that any $\bar{t}\in\bB_p$, e.g. a branch in the tree
  its $\Delta_n$-type over $\bbN$, i.e. the $\Delta_n$-theory of
  $(\bbN,\bar{t})$, so $t_\ell$ acts as a predicate (we can think of $\bB_u$
  as $\subseteq {}^u({}^\omega 2))$.
\end{enumerate}
Here the construction is by forcing over a countable $N_* \prec
(\cH(\chi),\in)$. Note that there is no problem to add $\cA^*:= N_*\cap
\cP(\omega)$. So we can prove the results for $\cA =$ (countable) $\cup$
(perfect). To improve it to perfect we need to force for PA by induction on
$n$ for $\Sigma_n$ formulas. 
\begin{enumerate}
\item[(a)$_4$] Note: for this it is O.K. if in every $p\in\bbQ_u$ the total
  number of commitments of the form ``$\rho$ is a member of $\varrho_x(i)$''
  is finite.  
\item[(b)$_1$] We can use $u_n = {}^n 2$, just a notational change, we would
  like to choose $p_n$ by induction on $n < \omega$ such that:
\begin{enumerate}
\item[$(\alpha)$]  $p_n\in\bbQ_{u_2}$, 
\item[$(\beta)$]  $p_n$ is such that for $\bar{t}\in\bB_{p_n}$ the
  $\Sigma_n$--theory of $(\bbN,\bar{t})$ can be read continuously on $p$,
\item[$(\gamma)$]  if $h:{}^n2\longrightarrow {}^{n+1}2$ is such that
  $(\forall\rho \in {}^n2)(h(\rho) \rest n = \rho)$, then $h(p_n) = p_n
\rest{\rm Rang}(h)$ both defined naturally (can make one duplicating at a
time). 
\end{enumerate}
\item[(b)$_2$]  In (b)$_1$, the set $\bigcup\{\varrho_x(i):x \in p\}$ grows
  from $p_n$ to $p_{n+1}$, i.e., here we need the major point in the choice
  of $\nor^0_x(C)$; however we do not need to diagonalize over it as in the
  proof about $\bbQ_u$. 
\item[(c)$_1$]  However, in \S3 we can define full creature iterated
  forcing, i.e. using countable support; it is of interest but irrelevant
  here;
\item[(c)$_2$]  but some cases of such creature forcing may look like: look
  at 
\[\bT' =\bigcup\{\prod_{k<n}(i+1):n < \omega\},\]
and the ideal
\[\{A\subseteq\prod_{i<\omega}(i+1):A = \bigcup_{n<\omega} A_n \mbox{ and } 
(\forall n<\omega)(\forall \eta\in\bT')(\exists\nu \in \suc_{\bT'}(\eta))
(\forall \eta \in A_n)[\neg(\nu\triangleleft\eta)]\}.\]
\item[(c)$_3$]  In the cases in which (c)$_2$ is relevant, we get a Borel
  set $\bB$ such that $(\bbN,t)_{t\in\bB}\ldots$, but not ``for every
  $\aleph_1$--members of $\bB$ we have$\ldots$''. 
\item[(d)] Actually, what we use are iterated creature forcing, but as we
  deal only with $\bbQ_u$, $u$ finite, so here we need not rely on the
  theory of creature iteration. 
\end{enumerate}
\end{discussion}

\section{Models of theories of expansions of $\bbN$ with no end extensions}  

\begin{theorem}
\label{4d.1} 
\begin{enumerate}
\item For some $\cA\subseteq \cP(\omega)$ some model of $\Th(\bbN_\cA)$ has
  no end extension. 
\item There is an uncountable Borel set $\cA\subseteq\cP(\omega)$ such that
  for any uncountable $\cA'\subseteq\cA$ the theory $T:=\Th(\bbN_{\cA'})$
  has a model with no end extension. 
\item In fact, any model $N$ of $T$ such that the naturally associated tree
  (set of levels $N$, the set of nodes of level $n \in N$ is $({}^n 2)^N$)
  has no undefinable branch is O.K.; such models exist by \cite{Sh:73}.
\item Moreover, without loss of generality, the set of subsets of $\bbN$
  definable in $\bbN_\cA$ is Borel. 
\end{enumerate}
\end{theorem}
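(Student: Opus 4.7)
The plan is to reduce Theorem \ref{4d.1} to the combinatorial sufficient condition on $\cA$ developed in Section 2, and then to realize that condition by a construction based on the creature forcing of Section 3. Part (1) is a special case of (2), so the main tasks are: to produce the Borel uncountable $\cA$ of (2), to justify the ``no undefinable branch'' characterization in (3), and to establish the Borel complexity of the definable sets in (4).

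For (3), I would argue as follows. Suppose $N$ is an ordinary model of $T=\Th(\bbN_{\cA'})$ whose internal binary tree $\bT^N$ (levels indexed by $N$, with $\bT^N_n = ({}^n 2)^N$) has no undefinable branch, and suppose towards contradiction that $N^+$ is an end extension of $N$. Picking any $c \in N^+ \setminus N$ and any nonstandard $n \in N^+ \setminus N$ with $c < 2^n$, the $n$-digit binary expansion of $c$ is an internal element of $({}^n 2)^{N^+}$ whose restriction to the standard cut $\omega$ determines a branch of $\bT^N$. The sufficient condition of Section 2 is designed so that any such new branch cannot be $N$-definable: being definable would encode in a first-order way a nontrivial Boolean combination of the predicates $P_{A_t}$ for uncountably many $t \in \cA'$, contradicting the independence/partition features of $\cA$. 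This contradicts the no-undefinable-branch hypothesis, and models of $T$ meeting the hypothesis exist by \cite{Sh:73}.

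For (2) and (4), I would construct $\cA$ by creature forcing over a countable $N_* \prec (\cH(\chi),\in)$, following Discussion \ref{0z.19}. The forcing $\bbQ_u$ for finite $u \subseteq {\rm Ord}$ is the ${}^\omega\omega$--bounding creature forcing of Section 3 with continuous reading of names, and its generic $\langle \name{t}_\alpha : \alpha \in u \rangle$ satisfies a strong partition/Ramsey principle. Because this principle has finite character in $u$, it transfers automatically to every uncountable subfamily $\cA' \subseteq \cA$. To upgrade from a single generic real to an uncountable Borel family, I follow the scheme (b)$_1$--(b)$_2$ of \ref{0z.19}: inductively choose $p_n \in \bbQ_{u_n}$ with $u_n = {}^n 2$ such that the $\Sigma_n$-theory of $(\bbN,\bar{t})$ can be read continuously off $p_n$, and such that each duplication map $h : {}^n 2 \to {}^{n+1} 2$ sends $p_{n+1}$ to $p_n \restriction {\rm Rang}(h)$; the resulting fusion produces a Borel set $\bB$ of admissible $\bar t$'s of cardinality $2^{\aleph_0}$, which serves as $\cA$. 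Part (4) then follows by induction on formula complexity: every $\varphi(\bbN,\bar a)$ is a Borel function of the finitely many $A_t$'s appearing in $\bar a$, by construction of $\cA$.

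The main obstacle is the interface between the Section~2 partition principle and the creature norms $\nor^0_x(C)$ of Section 3. One must simultaneously (a) isolate the exact Ramsey-type principle on $\cA$ that forces the restriction to $\omega$ of any end-extension-generated branch of $\bT^N$ to become $N$-definable, and (b) arrange the fusion sequence $\langle p_n \rangle$ so that an uncountable Borel set of its branches all verify this principle. The technical move emphasised in \ref{0z.19}(a)$_3$ --- replacing ``$\bar t$ is generic over $N_*$'' by ``$\bar t$ realizes a prescribed $\Delta_n$-type over $\bbN$'' --- is crucial, since it converts the typically meager genericity requirement into an arithmetical constraint that can be both Borel and uncountably realized.
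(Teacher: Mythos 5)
Your proposal correctly identifies the high-level architecture (reduce to a combinatorial condition proved by creature forcing, use \cite{Sh:73} to get a model with no undefinable branches), but the core mechanism of the Section~2 sufficient condition is different from what you describe, and as written your argument for (3) would not go through.

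In the paper, $\cA$ is not an abstract family; each $\varkappa\in\bT^*_\omega$ is a sequence of permutations of the levels $({}^{n_*(i)}2)_{i<\omega}$, and $A_\varkappa$ codes a \emph{twisted} tree order $<^*_\varkappa$ on $\omega$ (Definition~\ref{4d.3}). When $M^+$ end-extends $M$ and $b^*\in M^+\setminus M$, the single element $b^*$ determines, \emph{for each} $\varkappa$ with $A_\varkappa\in\cA'$, a full branch $\eta_\varkappa$ of the definable tree $(M,(<^*_\varkappa)^M)$. The hypothesis on $M$ from \cite{Sh:73} is used \emph{positively}: each $\eta_\varkappa$ must be definable in $M$. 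The contradiction then comes from comparing two different branches: a $\Delta$-system/pigeonhole step, using that $\cA'$ is uncountable, produces $\varkappa_1\neq\varkappa_2$ with sufficiently aligned defining formulas and parameters that the pbd condition $\otimes^{\text{wk}}_{\bT^*_\omega}$ applies, yielding that $\eta_{\varkappa_1}$ and $\eta_{\varkappa_2}$ have bounded intersection in $M$ --- but both branches pass through $b^*$ in $M^+$, violating $M\prec M^+$.

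Your version goes the other way: you argue that the branch ``cannot be $N$-definable,'' and your stated reason --- that definability ``would encode in a first-order way a nontrivial Boolean combination of $P_{A_t}$ for uncountably many $t$'' --- is not correct, since a first-order definition can mention only finitely many predicates, so this step would fail. You also work with a single binary tree $\bT^N$ rather than the family of $\varkappa$-permuted copies, so the pbd/disjoint-cone condition (which is a statement about two \emph{different} twisted trees) never gets traction. Without the multiple-tree structure and the $\Delta$-system argument to pair up two of them, there is nothing to collide and no contradiction. For (2) and (4), your description of the fusion over $\bbQ_{u_n}$ along a perfect tree is in the right spirit but too vague to check; the relevant content is Propositions~\ref{3c.37} and~\ref{2q.52}, together with Crucial Corollary~\ref{cruccor} and Lemma~\ref{3c.34}, which establish the bounded-intersection property for the generic branches --- precisely the pbd condition your proof of (3) needs but does not use.
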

\bigskip

The proof is broken to a series of definitions and claims finding a
sufficient condition proved in Sections 2, 3. More specifically,
Theorem \ref{4d.13}(b) gives a sufficient condition which is proved in
Proposition \ref{3c.37}.  

\begin{definition}
\label{4d.3}
\begin{enumerate}
\item Let sequences $\bar{n}^*=\langle n^*_i:i<\omega\rangle$ and
  $\bar{k}^*= \langle k^*_i:i<\omega\rangle$ be such that $n_0^* = 0$,
  $n^*_i \ll k^*_{i+1} \ll n^*_{i+1}$ for $i < \omega$. We can demand that
  the ranges of $\bar{n}^*,\bar{k}^*$ are definable in $\bbN$ even by a
  bounded formula. In fact, in our computations later we put $n^*_i =
  \beth(30i+30)$ (for $i>0$) and $k^*_i=\beth(30i+20)$, where $\beth(0)= 
  1$, $\beth(i+1) = 2^{\beth(i)}$.  

We also let $n_*(i) = n^*_i$. 
\item Let $\cY_\ell=\{\pi:\pi$ is a permutation of ${}^{n_*(\ell)}2\}$ and
  $\bT_n = \{\langle\pi_\ell:\ell < n\rangle:\pi_\ell\in \cY_\ell$ for
  $\ell< n\}$ and $\bT= \bigcup\{\bT_n:n < \omega\}$.

For $\varkappa\in \bT_n$ we keep the convention that $\varkappa = \langle 
\pi^\varkappa_\ell:\ell< n\rangle$ (unless otherwise stated).

\item For $\varkappa\in \bT$ let $<_\varkappa$ be the following partial
  order:  
\begin{enumerate}
\item[(a)]  ${\rm Dom}(<_\varkappa) = \bigcup\{{}^{n_*(i)}2:i
  <\lh(\varkappa)\}$; 
\item[(b)]  $\eta<_\varkappa\nu$ if and only if they are from ${\rm
    Dom}(<_\varkappa)$ and for some $i<j$ we have $\eta\in {}^{n_*(i)}2$,
  $\nu \in {}^{n_*(j)}2$ and $\pi^\varkappa_i(\eta)\triangleleft
  \pi^\varkappa_j(\nu)$.  
\end{enumerate}
Let $t_\varkappa = ({\rm Dom}(<_\varkappa),<_\varkappa)$ for
$\varkappa\in\bT$.   
\item Let $\bT_\omega$ be $\lim_\omega(\bT)$, i.e., 
\[\bT_\omega = \{\langle \pi_i:i < \omega\rangle:\pi_i\mbox{ is a
  permutation of }{}^{n_*(i)}2\mbox{ for }i<\omega\}\]
and for $\varkappa\in\bT_\omega$ let $\varkappa\rest n= \langle
\pi^\varkappa_i: i < n\rangle$.
 
We interpret $\varkappa\in\bT_\omega$ as the tree $t_\varkappa:= (\bigcup_{i
  < \omega} {}^{n_*(i)}2,<_\varkappa)$, where $<_\varkappa =\bigcup
\{<_{\varkappa \rest n}:n < \omega\}$, so $t = t_\varkappa$ is $({\rm Dom}(t),<_t)$.
\item Let $F$ be a one--to--one function from $\bigcup\{{}^{n_*(i)}2:i <
  \omega\}$ onto $\omega$, defined in $\bbN$ (i.e., the functions
  $n\mapsto\lh(F^{-1}(n))$ and $(n,i)\mapsto (F^{-1}(n))(i)$ are definable
  in $\bbN$ even by a bounded formula) such that $F$ maps each
  ${}^{n_*(i)}2$ onto an interval. Then clearly $F^{-1}$ is a one--to--one
  function from $\bbN$ onto $\bigcup\{{}^{n_*(i)}2:i < \omega\}$.  If
  $\bar{n}^*,\bar{k}^*$ are not definable in $\bbN$ then we mean definable
  in $(\bbN,\bar{n}^*,\bar{k}^*)$, considering $\bar{n}^*,\bar{k}^*$ as
  unary functions.
\item For $\varkappa\in\bT_\omega$ let $<^*_\varkappa$ be
  $\{(F(\eta),F(\nu)):\eta <_\varkappa \nu\}$ and $A_\varkappa =
  \{\pr(n_1,n_2):n_1 <^*_\varkappa n_2\}$ and let $t^*_\varkappa =
  (\omega,<^*_\varkappa)$; similarly $t^*_\varkappa$ for $\varkappa\in\bT$.  
\item For $\bS\subseteq \bT_\omega$ let $\cA_{\bS}=\{A_\varkappa: \varkappa
  \in \bS\}$ and let $\bA_{\bS}$ be the arithmetic closure of $\cA_{\bS}$
  recalling \ref{0z.2}(1). 
\end{enumerate}
\end{definition}

\begin{proposition}
\label{4d.5}
For $\varkappa\in\bT_\omega$, in $(\bbN,A_\varkappa)$ we can define
$<^*_\varkappa$ and 
\[\begin{array}{ll}
(\bbN,A_\varkappa)\models&\mbox{`` }<^*_\varkappa\mbox{ is a tree
with set of levels $\bbN$, set of elements $\bbN$ and }\\
&\quad \mbox{ each level finite (=bounded in $\bbN$, even an interval) ''.}
\end{array}\]
Of course, $t_\varkappa$ and $t_\varkappa^*=(\omega,<^*_\varkappa)$ are
isomorphic trees. Note that in $\bbN$ we can interpret the finite set theory
$\cH(\aleph_0)$.    
\end{proposition}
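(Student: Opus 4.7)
The proof is essentially a verification that the data packaged into $A_\varkappa$ really does encode the tree $t_\varkappa$. The plan has three parts.

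First, I would argue definability. The pairing function $\pr$ is definable in $\bbN$, so from $A_\varkappa$ one recovers $<^*_\varkappa$ by the quantifier-free formula $n_1<^*_\varkappa n_2 \Leftrightarrow \pr(n_1,n_2)\in A_\varkappa$. The level structure is then obtained through $F$: since $n\mapsto\lh(F^{-1}(n))$ is definable in $\bbN$, the $i$-th level of $(\omega,<^*_\varkappa)$ is the (definable) interval $F({}^{n_*(i)}2)$, and its cardinality $2^{n_*(i)}$ is bounded in $\bbN$.

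Second, I would verify the tree axioms, working in $t_\varkappa=(\mathrm{Dom}(<_\varkappa),<_\varkappa)$ and transporting along $F$. Given $\nu\in {}^{n_*(j)}2$ and any $i<j$, the element $\pi^\varkappa_j(\nu)$ is a string of length $n_*(j)$, so it has a unique initial segment $\sigma_i$ of length $n_*(i)$, and hence a unique candidate predecessor $\eta_i:=(\pi^\varkappa_i)^{-1}(\sigma_i)\in {}^{n_*(i)}2$ with $\eta_i<_\varkappa\nu$. For $i_1<i_2<j$ we have $\pi^\varkappa_{i_1}(\eta_{i_1})=\sigma_{i_1}\triangleleft\sigma_{i_2}=\pi^\varkappa_{i_2}(\eta_{i_2})$, so $\eta_{i_1}<_\varkappa\eta_{i_2}$; this yields both transitivity of $<_\varkappa$ and linearity of the predecessor set of each node. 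So $t_\varkappa$ is a tree whose $i$-th level is exactly ${}^{n_*(i)}2$, and every level is finite. Transporting via $F$, the same holds for $t^*_\varkappa=(\omega,<^*_\varkappa)$.

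Third, for the isomorphism $t_\varkappa\cong t^*_\varkappa$, the definition $<^*_\varkappa=\{(F(\eta),F(\nu)):\eta<_\varkappa\nu\}$ makes $F$ an order isomorphism by fiat, and $F$ is bijective by hypothesis. The last sentence, that $\cH(\aleph_0)$ can be interpreted in $\bbN$, is the classical Gödel coding of hereditarily finite sets by natural numbers (e.g., via Ackermann's membership $n\in_{\mathrm{Ack}} m\Leftrightarrow$ the $n$-th binary digit of $m$ is $1$), which is definable in $\bbN$ by a bounded formula, so nothing more is needed.

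The only step requiring care is the tree verification, and even there the main point is notational: one must keep track of the fact that the comparability of nodes in $t_\varkappa$ is defined through the \emph{image} under $\pi^\varkappa_i,\pi^\varkappa_j$, so transitivity is inherited from the tree $\bigcup_i{}^{n_*(i)}2$ ordered by $\triangleleft$, while the $\pi^\varkappa_i$ are merely a relabeling of each level. There is no real obstacle.
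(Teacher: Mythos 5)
Your proof is correct. The paper states Proposition \ref{4d.5} without giving any proof, treating it as a routine consequence of the definitions in \ref{4d.3}; your argument supplies exactly the verification that is implicitly being taken for granted: $<^*_\varkappa$ is recoverable from $A_\varkappa$ via $\pr$ and the definable coding $F$, the tree axioms hold because each $\pi^\varkappa_i$ is merely a relabeling of the $i$-th level of $\bigcup_i {}^{n_*(i)}2$ under $\triangleleft$, and $F$ is an order-isomorphism by construction. One tiny quibble: calling the defining formula for $<^*_\varkappa$ ``quantifier-free'' is not quite right since $\pr$ is not in $\tau_{\bbN}$; what the paper actually needs (and what holds) is that it is definable by a bounded formula.
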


Our aim is to construct objects with the following properties.

\begin{definition}
\label{4d.7} 
\begin{enumerate}
\item We say $\bT^*_\omega$ is {\em strongly pcd\/} (perfect cone disjoint)
  whenever:\\ 
$\bT^*_\omega$ is a perfect subset of $\bT_\omega$ such that:
\begin{enumerate}
\item[$\boxtimes^{\text{st}}_{\bT^*_\omega}$]  if $n<\omega$ and
  $\varkappa_0, \varkappa_1, \ldots, \varkappa_n \in \bT^*_\omega$ with no
  repetitions and for $\ell=0,1$, $\eta_\ell$ is an $\omega$--branch of
  $t^*_{\varkappa_\ell}$ which is definable in $(\bbN,A_{\varkappa_\ell},
  A_{\varkappa_2}, \ldots, A_{\varkappa_n})$,\\ 
then $\eta_0,\eta_1$ belong to disjoint cones (in their respective trees)
which means that: 
\begin{enumerate}
\item[$(\boxdot)$] for some level $n$ the sets 
\[\{a:a \mbox{ is $<^*_{t_\ell}$--above the member of $\eta_\ell$ of level
}n\} \subseteq \bbN\]
for $\ell=0,1$ are disjoint.
\end{enumerate}
\end{enumerate}
\item We say $\bT^*_\omega$ is {\em weakly pcd\/} (perfect cone disjoint)
  whenever:\\
$\bT^*_\omega$ is a perfect subset of $\bT_\omega$ such that:
\begin{enumerate}
\item[$\boxtimes^{\text{wk}}_{\bT^*_\omega}$] for every $n$ and
  $\varphi(x,\bar{y}_\ell) \in \bbL(\tau_{\PA} + \{P_0,\ldots,P_n\})$ there
  is $i(*)$ such that {\em if}
\begin{itemize}
\item $i \in [i(*),\omega)$ and $\varkappa_{m,\ell} \in \bT^*_\omega$ for $m\le
 n$, $\ell = 0,1$, 
\item $\varkappa_{0,0}\neq\varkappa_{0,1}$ and 
\item $\varkappa_{m_1,\ell_1} \rest i = \varkappa_{m_2,\ell_2} \rest i$\quad
  if and only if\quad $m_1 = m_2$, and 
\item $P_0,\ldots,P_n$ are unary predicates, $\varphi=\varphi(x,\bar{y},
  P_0,\ldots,P_n) \in \bbL(\tau_{\PA}+\{P_0,\ldots,P_n\})$, and
  $\bar{b}_\ell \in {}^{\lh(\bar{y})}\bbN,\varphi(x,\bar{b}_\ell,
  A_{\varkappa_{0,\ell}}, \ldots, A_{\varkappa_{n,\ell}})$ define in
  $(\bbN,A_{\varkappa_{0,\ell}},\ldots,A_{\varkappa_{n,\ell}})$ a branch
  $B_\ell$ of $t^*_{\varkappa_{0,\ell}}$ for $\ell=0,1$ 
\end{itemize}
{\em then\/} the branches $B_0,B_1$ have disjoint cones (in their respective
trees). 
\end{enumerate}
\item Conditions $\otimes^{\text{wk}}_{\bT^*_\omega}$ and
  $\otimes^{\text{st}}_{\bT^*_\omega}$ are defined like
  $\boxtimes^{\text{wk}}_{\bT^*_\omega}$,
  $\boxtimes^{\text{st}}_{\bT^*_\omega}$ above replacing ``have disjoint
  cones'' (i.e., $(\boxdot)$) by ``have bounded intersection'', which means
  that 
\begin{enumerate}
\item[$(\odot)$] for some $a$ the sets $\{b\in\eta_0: b$ is of level $>a\}$
  and $\{b\in\eta_1: b$ is of level $>a\}$ are disjoint.
\end{enumerate}
Then we define {\em weakly pbd\/} and {\em strongly pbd\/} (where {\em
  pbd\/} stands for {\em perfect branch disjoint\/}) in the same manner as
pcd above, replacing   $\boxtimes^{\text{wk}}_{\bT^*_\omega}$,
$\boxtimes^{\text{st}}_{\bT^*_\omega}$ by
$\otimes^{\text{wk}}_{\bT^*_\omega}$ and
$\otimes^{\text{st}}_{\bT^*_\omega}$, respectively.
\item Omitting strongly/weakly means weakly. 
\end{enumerate}
\end{definition}

One may now ask if the existence of pcd/pbd (Definition \ref{4d.7}) can be
proved and if this concept helps us. We shall prove the existence of pbd in
Sections  2 and 3, specifically in \ref{3c.37}. The existence of pcd remains
an open question. Below we argue that objects of this kind are usefull to
prove Theorem \ref{4d.1}. 

\begin{theorem}
\label{4d.13} 
\begin{enumerate}
\item[(a)] If $\bT^*_\omega$ is a pcd, i.e., it is a perfect subset of $\bT_\omega$
satisfying $\boxtimes^{\rm wk}_{\bT^*_\omega}$ from Definition \ref{4d.7},
then $\cA=\cA_{\bT^*_\omega}$ (see Definition \ref{4d.3}(7)) is as required
in \ref{4d.1}.  
\item[(b)] Even if $\bT^*_\omega$ is a pbd then $\cA=\cA_{\bT^*_\omega}$ is as
  required in \ref{4d.1}.
\end{enumerate}
\end{theorem}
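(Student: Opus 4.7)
The plan is to prove (b), since (a) follows from (b) because pcd is stronger than pbd (disjoint cones imply bounded intersection). Fix an uncountable $\cA' \subseteq \cA_{\bT^*_\omega}$, written $\cA' = \{A_\varkappa : \varkappa \in \bS'\}$ with $\bS' \subseteq \bT^*_\omega$ uncountable, and set $T = \Th(\bbN_{\cA'})$. By \cite{Sh:73} (cf.\ \ref{4d.1}(3)) I select a model $N \models T$ in which every external $\omega$-branch of the associated binary tree is $N$-definable with parameters. The task is to show that such $N$ admits no proper elementary end extension.

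Suppose, toward contradiction, that $N \prec N^+$ is a proper end extension and fix $a \in N^+ \setminus N$. Since $a$ exceeds every element of $N$, its binary length in $N^+$ is $N$-nonstandard, so $g:\omega \to N$ defined by $g(n) = (a\rest n)^{N^+}$ is an external $\omega$-branch of the binary tree of $N$ (the values lie in $N$ by end-extensionality). For each $\varkappa \in \bS'$ we obtain an external $\omega$-branch
\[
B(a, \varkappa) = \{F(\pi^\varkappa_i(g(n_*(i)))) : i < \omega\} \subseteq \bbN
\]
of the tree $t^*_\varkappa$. By the hypothesis on $N$, the branch $g$ is $N$-definable, say by $\psi(x, \bar c)$ with $\bar c \in N$ and involving finitely many predicates $A_{\tau_1}, \ldots, A_{\tau_r} \in \cA'$; composing $\psi$ with the $\bbN$-definable maps $F, \pi^\varkappa, n_*$ yields a formula $\Phi(x, \bar c, A_\varkappa, A_{\tau_1}, \ldots, A_{\tau_r})$ defining $B(a, \varkappa)$ in $N$, uniform in $\varkappa$.

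The heart of the proof is the reduction from the $N$-parameter $\bar c$ to a standard parameter $\bar b \in \bbN$, placing each $B(a, \varkappa)$ in $\arcl(\cA')$; this is accomplished by arranging, through a refinement of the \cite{Sh:73} construction, that ${\rm StSy}(N) = \arcl(\cA')$, so every $N$-definable subset of $\omega$ is $\bbN_{\cA'}$-definable from standard parameters. Once this is achieved, let $i(*)$ be the pbd-bound for $\Phi$ given by Definition~\ref{4d.7}(3). By pigeonhole on the finitely many possible values of $\varkappa \rest i(*)$ for $\varkappa \in \bS'$, extract an uncountable $\bS'' \subseteq \bS'$ on which $\varkappa \rest i(*) = \sigma$ is constant, chosen distinct from each $\tau_j \rest i(*)$ (with the $\tau_j \rest i(*)$'s pairwise distinct, by simplifying $\Phi$ if necessary). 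For any two distinct $\varkappa, \varkappa' \in \bS''$ the pbd pattern with $\varkappa_{0,0} = \varkappa$, $\varkappa_{0,1} = \varkappa'$, $\varkappa_{j, \ell} = \tau_j$ for $j \geq 1$ is met at level $i(*)$, so $B(a, \varkappa) \cap B(a, \varkappa')$ is finite. By the perfection of $\bT^*_\omega$ and the uncountability of $\bS''$, pairs $\varkappa \neq \varkappa' \in \bS''$ can be chosen with $\varkappa \rest i' = \varkappa' \rest i'$ for $i'$ arbitrarily large; then $\pi^\varkappa_k = \pi^{\varkappa'}_k$ for all $k < i'$, forcing $|B(a, \varkappa) \cap B(a, \varkappa')| \geq i'$, which contradicts the uniformity of the pbd bound extractable once the formula $\Phi$ and standard parameters are fixed.

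The principal obstacle is the parameter-reduction step, namely arranging ${\rm StSy}(N) = \arcl(\cA')$ together with the \cite{Sh:73} no-undefinable-branch property; this requires a genuine refinement of the model-theoretic construction rather than a direct appeal. A secondary subtlety is the extraction of uniformity from pbd: Definition~\ref{4d.7}(3) states the bound $a$ in $(\odot)$ pointwise, but with $\Phi$ and the standard parameters held fixed across the pairs in $\bS''$ the bound should be made to depend only on them and not on the individual pair, which is essential to close the contradiction.
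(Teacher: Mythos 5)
Your proposal takes a genuinely different route from the paper, and it has a fatal flaw in the closing step beyond the parameter-reduction issue you flag. The paper works with the \emph{full} branch $\eta_\varkappa = \langle b^\varkappa_a : a \in M\rangle$ of $(<^*_\varkappa)^M$, visiting every (possibly nonstandard) level $a \in M$; this is what \cite{Sh:73} controls, so $\eta_\varkappa$ is $M$-definable by $\varphi_\varkappa(-,\bar d_\varkappa)$ with $\bar d_\varkappa \in M$ nonstandard. The key observation you miss is that the pbd hypothesis, since it universally quantifies over the parameter tuples $\bar b_\ell$, already encodes a first-order truth about $\bbN_{\cA'}$ of the form ``$\forall\bar y_1\forall\bar y_2$ [if $\varphi(-,\bar y_\ell)$ defines a branch of $t^*_{\varkappa_\ell}$ for $\ell=1,2$, then the branches have bounded intersection]''. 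By elementarity this transfers to $M$ and applies directly to $\eta_{\varkappa_1},\eta_{\varkappa_2}$ with their $M$-parameters. There is therefore no need to reduce to standard parameters, no need for ${\rm StSy}(N)=\arcl(\cA')$ (which, as you note, is unestablished), and the contradiction is that the nonstandard element $b^*\in M^+\setminus M$ lies on both definable branches at a level past the bound $a_*$, which $M^+$ thinks is impossible.

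Your final contradiction, in contrast, does not go through and never actually uses the element $a\in N^+\setminus N$. The pbd property asserts that each qualifying pair of branches has \emph{some} bounded intersection; the bound $a$ in $(\odot)$ of Definition~\ref{4d.7}(3) depends on the branches and hence on the $\varkappa$'s and parameters, not only on $\Phi$. Uniformity of the bound is not available, and your own observation shows it cannot be: when $\varkappa\rest i'=\varkappa'\rest i'$ the branches $B(a,\varkappa)$ and $B(a,\varkappa')$ necessarily agree on the first $i'$ levels, so a bound depending only on $\Phi$ and the standard parameters would already falsify the pbd property on any perfect $\bT^*_\omega$. Thus what you exhibit is fully consistent with the pbd hypothesis --- the intersections are finite but of unbounded size as $\varkappa,\varkappa'$ approach one another --- and no contradiction is reached. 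The ``secondary subtlety'' you mention is in fact the place the argument breaks: the route to the contradiction must pass through the nonstandard model, as the paper does.
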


\begin{proof}
(a)\qquad We will deal with each part of Theorem \ref{4d.1}. First we give
details for part (3) of \ref{4d.1}.

For $\varkappa\in\bT^*_\omega$ recall 
\[A_\varkappa=\{\pr(F(\eta),F(\nu)):\eta <^*_\varkappa \nu\}\subseteq\bbN\]
and $\cA=\{A_\varkappa:\varkappa\in\bT^*_\omega\}\subseteq\cP(\omega)$.
Assume $\cA'\subseteq\cA$ is uncountable and let $T=T_{\cA'}=
\Th(\bbN_{\cA'})$ and $\tau_{\cA'}$ be its vocabulary. Then by \cite{Sh:73}
the theory $T$ has a model $M$ in which definable trees (we are interested
just in the case the set of levels being $M$ with the order $<^M$) have no
undefinable branches, so, in particular (and this is enough) 

if $\varkappa \in \cA$, then $(<^*_\varkappa)^M$ has no undefinable branch 

\noindent (i.e., as in \cite{Sh:73}, branches mean full branches,
``visiting'' every level).  Note that ``the $a$-th level of
$(M,(<^*_\varkappa)^M)$'' does not depend on $\varkappa$.  

Assume towards contradiction $M^+$ is an (elementary) end-extension of $M$
and let $b^*\in M^+\setminus M$.  Now consider any $A_\varkappa\in \cA$ so
$(<^*_\varkappa)^M$ is naturally definable in $M$ and 
\[\begin{array}{ll}
M \models&\mbox{`` for every element $a$ serving as level,}\\
&\langle\{c:b <_\varkappa c\}:b\mbox{ is of level $a$ in the tree $t_\varkappa$,
i.e. }(M,(<^*_\varkappa)^M)\rangle\\
&\mbox{ is a partition of $\{x:x$ is of $<^*_\varkappa$-level $>a\}$ to
  finitely many sets '',}
\end{array}\]
the finite is in the sense of $M$ of course.

As $M^+$ is an end-extension of $M$ recalling \ref{4d.3}(5) it follows that
the level of $b^*$ in $M^+$ is above $M$ and $b^*$ defines a branch of
$(M,(<^*_\varkappa)^M)$ which we call $\eta_\varkappa = \langle
b^\varkappa_a:a \in M\rangle$. That is $b^\varkappa_a$ is the unique member
of $M$ of level $a$ such that $M^+ \models$`` $b^\varkappa_a \le^*_\varkappa
b^*$ ''. 

By the choice of $M$ the branch $\eta_\varkappa$, i.e., $\{b^\varkappa_a:a
\in M\}$ is a definable subset of $M$, say by $\varphi_\varkappa(x,
\bar{d}_\varkappa)$ where $\varphi_\varkappa(x,\bar{y}_\varkappa) \in
\bbL(\tau_{\cA'})$ and $\bar{d}_\varkappa\in {}^{\lh(\bar{y}_\varkappa)}M$.
Now by the assumptions on $\cA,\cA',T$ there are $s_{\varkappa,1},\ldots,
s_{\varkappa,n_\varkappa}\in \bT^*_\omega\setminus\{\varkappa\}$ with no
repetitions, hence $A_{s_{\varkappa,n}} \in \cA'\setminus\{A_\varkappa\}$
for $n=1,\ldots, n_\varkappa$, and in $\varphi_\varkappa (x,
\bar{y}_\varkappa)$ only $A_{s_{\varkappa,1}},\ldots,A_{s_{\varkappa,
    n_\varkappa}}$ and $A_\varkappa$ appear (i.e., the predicates
$P_{s_{\varkappa,1}},\ldots, P_{s_{\varkappa, n_\varkappa}},P_\varkappa$
corresponding to them and $\tau_{\PA}$, of course). Let $s_{\varkappa,0}=
\varkappa$ and we write $\varphi'_\varkappa=\varphi'_\varkappa(x,
\bar{y}_\varkappa,\bar{P}_\varkappa)$, where $\bar{P}_\varkappa= \langle
P_{s_{\varkappa,\ell}}: \ell\leq n_\varkappa\rangle$ and
$\varphi'_\varkappa$ has non-logical symbols only from $\tau_{\PA}$ and so
$\varphi'_\varkappa=\varphi''_\varkappa(x,\bar{y}_\varkappa)\in
\bbL(\tau_\PA \cup \{P_\ell:\ell\leq n_\varkappa\})$, that is
$\varphi'_\kappa(x,\bar{y}_\varkappa)$ when we substitute $P_\ell$ for
$P_{s_{\varkappa,\ell}}$ for $\ell\leq n_\varkappa$. 

For $A_\varkappa\in\cA$ let 
\[m_\varkappa=\min\{m:s_{\varkappa,\ell} \rest m\mbox{ for } \ell=
0,\ldots, n_\varkappa\mbox{ are pairwise distinct }\}.\]  
Hence for some $\varphi_*(x,\bar{y}_*),n_*,m_*,\bar{s}_*$ the set 
\[\cA_2 = \{A_\varkappa\in\cA:\varphi'_\varkappa=\varphi_*,\
\bar{y}_\varkappa= \bar{y}_*,\ \mbox{ so }\ n_\varkappa = n_*,\ m_\varkappa
= m_*\mbox{ and } \langle s_{\varkappa,\ell} \rest
m_*:\ell=0,\ldots,n_*\rangle = \bar{s}_*\}\]
is uncountable. Let $i(*)\geq m_*$ be as guaranteed by
$\boxtimes^{\text{wk}}_{\bT^*_\omega}$, so for some uncountable $\cA_3
\subseteq \cA_2$ for some $\bar{s}_{**}$ we have that $\langle
s_{\varkappa,\ell} \rest i(*):\ell=1,\ldots,n_*\rangle = \bar{s}_{**}$
whenever $A_\varkappa \in \cA_3$. As $\cA$ is uncountable clearly for some
$A_{\varkappa_1} \ne  A_{\varkappa_2} \in \cA$ we have
$\{\varkappa_1,\varkappa_2\}$  is disjoint to $\{s_{\varkappa_\ell,m}:m = 1,
\ldots,n_{\varkappa_\ell}$ and $\ell=1,2\}$.

So by $\boxtimes^{\text{wk}}_{\bT^*_\omega}$ from Definition \ref{4d.7}
for some $a \in M$ we have  
\begin{enumerate}
\item[$(\boxdot)$] $M \models$`` $\{c:b^{\varkappa_1}_a <^*_{\varkappa_1}
  c\}\cap\{c:b^{\varkappa_2}_a <^*_{\varkappa_2} c\} = \emptyset$ ''.
\end{enumerate}
[Why?  Because $\bbN_{\cA'}\models$`` $(\forall \bar{y}_{\varkappa_1})
(\forall \bar{y}_{\varkappa_2})$ [if $\varphi_{\varkappa_\ell}(-,
\bar{y}_{\varkappa_\ell})$ define a branch of $t^*_{\varkappa_\ell}$ for
$\ell=1,2$, then there are $x_1,x_2$ such that $\varphi_{\varkappa_1}(x_1,
\bar{y}_{\varkappa_1})\wedge\varphi_{\varkappa_2}(x_2,
\bar{y}_{\varkappa_2}) \wedge \neg(\exists z)[x_1 \le_{t^*_{\varkappa_1}} z
\wedge x_2 \le_{t^*_{\varkappa_2}} z]$] ''.]

But in $M^+$ the elements $b^*$ belong to both, contradiction to $M \prec
M^+$.
\medskip

Now, parts (2), (3) of \ref{4d.1} follow and so does part (1).
\medskip

(4) See on this \cite{EnSh:936}. Alternatively, when is $\cB= \{A\subseteq
\bbN:A$ is definable in $\bbN_\cA\}$ Borel? As we can shrink $\bT^*_\omega$,
without loss of generality there is a function $g\in {}^\omega \omega$ such
that for every $f\in {}^\omega \omega$ definable in $\bbN_\cA$, we have $f
<_{J^{\text{bd}}_\omega} g$, i.e., $(\forall^\infty i))(f(i) < g(i))$.  This
suffices (in fact if we prove \ref{4d.7} using forcing notion $\bbQ_u$,
where each $\bbQ_u$ is ${}^\omega \omega$-bounding this will be true for
$\bT^*_\omega$ itself and we do this in \S3; moreover we have continuous
reading for every such $f$ (as a function of $(A_{\varkappa_0}, \ldots,
A_{\varkappa_{n-1}})$ for some $\varkappa_0, \ldots,\varkappa_{n-1} \in
\bT^*_\omega$).    
\medskip

\noindent (b)\qquad We repeat the proof of (a) above untill the choice of
$\{\varkappa_1,\varkappa_2\}$ (right before $(\boxdot)$), but we replace the
rest of the arguments for clause (3) of \ref{4d.1} by the following.

So by $\otimes^{\text{wk}}_{\bT^*_\omega}$ of Definition \ref{4d.7}(3),
for some $a_*\in M$ we have 
\begin{enumerate}
\item[$(\odot)$] $M \models$`` the sets $\{b^{\varkappa_1}_a: a_*<a\}$,
  $\{b^{\varkappa_2}_a: a_*<a\}$ are disjoint''.
\end{enumerate}
(Remember that all the trees we consider have the same levels.) But in $M^+$
the element $b^*$ belongs to both definable branches contrary to $M\prec
M^+$.  
\end{proof}

\begin{theorem}
\label{4d.17}
\begin{enumerate}
\item If $\bT^*_\omega$ is a strong pcd, i.e., it is a perfect subset of
  $\bT_\omega$ satisfying $\boxtimes^{\text{st}}_{\bT^*_\omega}$ from
  \ref{4d.7}, and $\cA \subseteq \{A_\varkappa:\varkappa \in \bT^*_\omega\}$
  is uncountable, then there is no weakly definably closed ultrafilter on
  $\arcl(\cA)$, see Definition \ref{0z.7}(5).
\item Above, we may replace ``pcd'' with ``pbd''.
\item Without loss of generality, $\arcl({\bT}^*_\omega)$ is a Borel set.
\end{enumerate}
\end{theorem}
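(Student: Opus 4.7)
The plan is to mirror the strategy of Theorem~\ref{4d.13}, using a hypothetical weakly definably closed ultrafilter $D$ on $\arcl(\cA)$ as a proxy for the external end-extension element $b^*$. Fix such a $D$ and seek a contradiction. For each $\varkappa \in \bT^*_\omega$ with $A_\varkappa \in \cA$ and each level $a$, the finitely many cones $\{c : b \leq^*_\varkappa c\}$ (for $b$ of level $a$) partition a cofinite subset of $\omega$, and by non-principality of $D$ exactly one of them lies in $D$; call its base $b^\varkappa_a$ and set $\eta_\varkappa = \{b^\varkappa_a : a < \omega\}$. The $\omega$-indexed sequence of these cones is coded by a subset of $\omega$ arithmetic in $A_\varkappa$, hence in $\arcl(\cA)$, so weakly definable closure of $D$ delivers $\eta_\varkappa = \{b : \{c: b \leq^*_\varkappa c\} \in D\} \in \arcl(\cA)$.

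Each $\eta_\varkappa$ is then first-order definable over $\bbN$ by some formula $\varphi_\varkappa$ with additional predicates $A_{s_{\varkappa,\ell}} \in \cA$. A pigeonhole and $\Delta$-system argument, exactly as in the proof of Theorem~\ref{4d.13} (uniformizing the formula, the number of parameters, and the initial segments of the parameter indices up to the level $i(*)$ supplied by $\boxtimes^{\text{st}}_{\bT^*_\omega}$ or $\otimes^{\text{st}}_{\bT^*_\omega}$), produces an uncountable subfamily $\cA'' \subseteq \cA$ from which we pick distinct $\varkappa_1, \varkappa_2 \in \cA''$ with $\{\varkappa_1,\varkappa_2\}$ disjoint from the auxiliary parameter indices of both. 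For part~(1), $\boxtimes^{\text{st}}_{\bT^*_\omega}$ then furnishes a level $a$ at which the two cones $\{c: b^{\varkappa_1}_a \leq^*_{\varkappa_1} c\}$ and $\{c: b^{\varkappa_2}_a \leq^*_{\varkappa_2} c\}$ are disjoint; since both lie in $D$ by construction, we immediately reach the desired filter contradiction.

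For part~(2), $\otimes^{\text{st}}_{\bT^*_\omega}$ only yields that $\eta_{\varkappa_1} \cap \eta_{\varkappa_2}$ is finite, and the main obstacle is that $\eta_\varkappa$ need not itself lie in $D$. The plan is to partition $\cA''$ according to whether $\eta_\varkappa \in D$ or $\omega \setminus \eta_\varkappa \in D$: in the former case an uncountable half allows us to select $\varkappa_1, \varkappa_2$ with both $\eta_{\varkappa_\ell} \in D$, so that their finite intersection belongs to $D$ and contradicts non-principality. The harder subcase, when $\omega \setminus \eta_\varkappa \in D$ for uncountably many $\varkappa$, is expected to be handled by using iterability of $D$ to build an elementary end extension of a model of $\Th(\bbN_\cA)$ admitting none by Theorem~\ref{4d.13}(b). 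Finally, part~(3) follows from the ${}^\omega\omega$-bounding character of the creature forcing $\bbQ_u$ constructed in Section~3: continuous reading of names shows that every subset of $\omega$ in $\arcl(\bT^*_\omega)$ is computable in a Borel way from finitely many parameters, so this family can be arranged Borel as indicated in the remarks after Theorem~\ref{4d.13}(4).
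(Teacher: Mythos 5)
Your argument for part (1) matches the paper's in substance: you derive definability of the branch $\eta_\varkappa = \{b^\varkappa_i : i < \omega\}$ from the weakly-definably-closed property applied to the arithmetically coded sequence of cones, then uniformize by a pigeonhole/$\Delta$-system step and use $\boxtimes^{\text{st}}$ to get two level-$a$ cones, disjoint yet each a member of $D$ (since each contains some $B_{\varkappa_\ell,a}\in D$), a filter contradiction. The one cosmetic difference is that the paper first passes to an $\aleph_2$-saturated elementary extension $M^+$ of $\bbN_\cA$ with an element $b^*$ realizing the $D$-type and then says ``continue as in \ref{4d.13}''; you argue directly from $D$ without invoking $M^+, b^*$, which is cleaner and equivalent for part (1). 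Part (3) also matches the paper's brief remark (continuous reading of names / $^{\omega}\omega$-bounding of $\bbQ_u$).

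The genuine gap is in part (2). You correctly identify the obstacle: under $\otimes^{\text{st}}$ one only gets that the two definable branches $\eta_{\varkappa_1}, \eta_{\varkappa_2}$ have finite intersection, and unlike the cones, there is no reason for $\eta_\varkappa$ itself to lie in $D$ (it is a one-element-per-level set, and an arbitrary nonprincipal ultrafilter on $\arcl(\cA)$ can avoid it). Your Case~1 ($\eta_{\varkappa_\ell}\in D$ for two of the chosen $\varkappa$'s) does yield the contradiction immediately. But your Case~2 -- ``use iterability of $D$ to build an elementary end extension of a model of $\Th(\bbN_\cA)$ admitting none by \ref{4d.13}(b)'' -- does not connect. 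An iterable ultrafilter on $\arcl(\cA)$ produces, via the definable ultrapower, a conservative end extension of the \emph{standard} model $\bbN_\cA$; Theorem \ref{4d.13}(b) instead produces a \emph{nonstandard} model $M\models\Th(\bbN_{\cA'})$ (one whose definable trees have no undefinable branches) with no end extension, and $D$ gives you no ultrafilter over that $M$. There is no transfer from ``$\bbN_\cA$ has a conservative end extension'' to ``every model of $\Th(\bbN_\cA)$ has an end extension,'' so the contradiction you hope for does not follow. The paper's own proof dispatches (2) with the single word ``Similarly,'' which also does not spell out how the $b^*$-on-both-branches argument survives when $\eta_\varkappa\notin D$; you have at least made the difficulty explicit, but your proposed resolution for the hard case is not a proof.
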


\begin{proof}
(1) Assume towards contradiction that a pair $(\cA,D)$ forms a
counterexample.  Let $M =\bbN_\cA$ and let $M^+$ be an $\aleph_2$--saturated
elementary extension of $M$ and let $b^*\in M^+$ realizes the type
\[\begin{array}{ll}
p^* = \{\varphi(x,\bar{a}):&\varphi(x,\bar{y})\in \bbL(\tau_M), \bar{a}\in
{}^{\lh(\bar{y})} M \mbox{ and} \\ 
  &\{b\in M:M \models \varphi[b,\bar{a}]\} \mbox{ includes some member of }
  D\}. 
\end{array}\]
Clearly $p^*$ is a set of formulas over $M$, finitely satisfiable in
$M$ and even a complete type over $M$.

Now, for every $\varkappa$ such that $A_\varkappa \in \cA$ and $i<\omega$ we
consider a function $g_{\varkappa,i}$ definable in $M$ as follows:
\begin{enumerate}
\item[$(*)_1$]  $g_{\varkappa,i}(c)$ is:
\begin{enumerate}
\item[$(\alpha)$]  $b$ if $c$ is of $<^*_{\varkappa}$--level $\ge i$ in
  $(\bbN,<_\varkappa)$ and $b$ is of $<^*_\varkappa$--level $i$ and $b
  \le^*_\varkappa c$;
\item[$(\beta)$]  $c$ if $c$ is of $<^*_\varkappa$-level $<i$ in
  $(\bbN,<_\varkappa)$.  
\end{enumerate}
\end{enumerate}
Clearly $g_{\varkappa,i}$ is definable in $(\bbN,A_\varkappa)$, the range of
$g_{\varkappa,i}$ is finite, so $g_{\varkappa,i}\rest B_{\varkappa,i}$ is
constant for some $B_{\varkappa,i}\in \{g^{-1}_{\varkappa,i}\{x\}:x\in {\rm
  Rang}(g_{\varkappa,i})\}\cap D$.  As all co-finite subsets of $\bbN$
belong to $D$, also $B_{\varkappa,i}$ cannot be a singleton member of
level $\neq i$. Hence for some $b_{\varkappa,i}$ of level $i$ for
$<^*_\varkappa$ we have $B_{\varkappa,i}\subseteq \{c:b_{\varkappa,i}
\le^*_\varkappa c\}$.  Now moreover for some formula $\varphi_\varkappa
(x_0,x_1,x_2) \in\bbL(\tau_{\PA}+ P_\varkappa)$, for each $i\in\bbN$ the
formula $\varphi_\varkappa(x_0,x_1,i)$ defines $g_{\varkappa,i}(x_1) = x_1$. 
By the ``weakly definable closed'' (see Definition \ref{0z.7}(5)),
$\{b_{\varkappa,i}:i <\omega\}$ is definable in $\bbN_\cA$. 

Now we continue as in the proof of \ref{4d.13}.
\medskip

(2) Similarly.
\medskip

(3) As in \ref{4d.13} (for clause (4) of \ref{4d.1}).
\end{proof}

\section{The (iterated) creature forcing}
We continue the previous section, so we use notation as there, see
Definitons \ref{4d.3} and \ref{4d.7}. In particular, $n^*_0=0$,
$n_*(i)=n^*_i = \beth(30i+30)$ (for $i>0$) and $k^*_i=\beth(30i+20)$. We 
also set $\ell^*_i=\beth(30i+10)$.

\begin{definition}
\label{2q.1} 
For $i<\omega$ and a finite set $u$ of ordinals we define:
\begin{enumerate}
\item[(A)]  $\OB^u_i$ is the set of all triples $(f,g,e)$ such that ($\Per(A)$
  stands for the set of permutations of $A$): 
\begin{enumerate}
\item[(a)]  $f,g\in {}^u(\Per({}^{n_*(i)}2))$;
\item[(b)]  if $i-1=j\ge 0$ and $\alpha\in u$, then $(f(\alpha)(\rho)) \rest
  n^*_j = (g(\alpha)(\rho))\rest n^*_j$ for all $\rho \in {}^{n_*(i)}2$,
\item[(c)]  $e$ is a function with domain $u$ such that for each $\alpha\in
  u$ 
\[e(\alpha):\Per({}^{n_*(i-1)}2)\longrightarrow \Per({}^{n_*(i)}2) \times
 \Per({}^{n_*(i)}2).\]
\end{enumerate}
Above, we stipulate $n_*(i-1)=0$ if $i=0$. Also, let us note that some
triples will never be used, only $\bigcup\{\suc(x):x\in\OB^u_i\}$ and we
should iterate. 
\item[(B)] For $x\in\OB^u_i$ we let $x = (f_x,g_x,e_x)$ and $i=\bi(x)$ and 
  $u=\supp(x)$.  
\item[(C)] For $x\in\OB^u_i$ we set 
\[\begin{array}{lr}
\suc(x)=\big\{y\in\OB^u_{i+1}:& \big(\forall\rho \in {}^{n_*(i+1)}2 \big)
\big(\forall\alpha\in u\big) \big(g_x(\alpha)(\rho\rest
n^*_i)=(f_y(\alpha)(\rho)) \rest n^*_i \big)\mbox{ and }\ \ \ \\
& \big(\forall\alpha\in u\big)\big(e_y(\alpha)(g_x(\alpha))=
(f_y(\alpha),g_y(\alpha))\big)\big\}.  \end{array}\]   
\item[(D)]  For $j \le \omega$ let 
\[\bS_{u,j} = \big\{\langle x_\ell:\ell < j\rangle: (\ell< j \Rightarrow
x_\ell\in\OB^u_\ell)\ \mbox{ and }\ (\ell+1 < j \Rightarrow x_{\ell+1} \in 
\suc(x_\ell))\big\}.\] 
\item[(E)]  $\bS_u = \bigcup\{\bS_{u,\ell}:\ell<\omega\}$; we consider it a
  tree, ordered by $\triangleleft$. 
\item[(F)] For $x\in\OB^u_i$ and $w\subseteq u$ let $x\rest w= (f_x\rest w,
  g_x \rest w, e_x\rest w)$. 
\item[(G)] For $i\le \omega$, $w \subseteq u$ and $\bar{x}=\langle x_j:j<
  i\rangle\in\bS_{u,i}$ let $\bar{x} \upharpoonleft w = \langle x_j \rest
  w:j<i\rangle$ and for $\alpha \in u$ let $\varkappa^\alpha_{\bar{x}} =
  \langle f_{x_j}(\alpha):j<i\rangle$. 
\item[(H)] For $\bar{x}\in\bS_{u,\ell}$, $\ell\le\omega$, and $\alpha\in u$
  let $t_{\bar{x},\alpha}=t^\alpha_{\bar{x}}$ be the tree with
  $\lh(\bar{x})$ levels, with the $i$-th level being ${}^{n_*(i)}2$ for
  $i<\lh(\bar{x})$ and the order $<_{t_{\bar{x},\alpha}}$ defined by 

$\eta <_{t_{\bar x,\alpha}} \nu$\qquad if and only if 

for some $i<j<\lh(\bar{x})$ we have $\eta\in {}^{n_*(i)}2$, $\nu\in
{}^{n_*(j)}2$ and $f_{x_i}(\alpha)(\eta) \triangleleft
f_{x_j}(\alpha)(\nu)$.  
\end{enumerate} 
\end{definition}

Since we are interested in getting ``bounded branch intersections'' we will
need the following observation (part (5) is crucial in proving cone
disjointness in some situation later).

\begin{proposition}
\label{2q.3}
Assume $\bar{x}\in\bS_u$ and $\alpha \in u$.
\begin{enumerate}
\item If $\rho\in {}^{n_*(j)}2$ and $j<\lh(\bar{x})$, then $\langle
  g_{x_i}(\alpha)(\rho \rest n_*(i)):i \le j\rangle$ is
  $\triangleleft$--increasing noting $g_{x_i}(\alpha)(\rho\rest n_*(i)) \in
  {}^{n_*(i)}2$. 
\item $\varkappa^\alpha_{\bar{x}}\in\bT_{\lh(\bar{x})}$ and
  $t_{\varkappa^\alpha_{\bar{x}}} = t^\alpha_{\bar{x}}$, on
  $t_{\varkappa^\alpha_{\bar{x}}}$ see \ref{4d.3}(3). 
\item If $i<j<\lh(\bar{x})$ and $\nu\in {}^{n_*(j)}2$, then
  $(f_{x_j}(\alpha)(\nu)) \rest n^*_i$ depends just on $\bar{x}\rest (i+1)$,
  actually just on $g_{x_i}$, i.e., it is equal to $g_{x_i}(\alpha)(\nu\rest
  n^*_i)$.  
\item The sequence $\langle g_{x_j}(\alpha),f_{x_j}(\alpha):j<\lh (\bar{x})
  \rangle$ is fully determined by $\langle e_{x_j}(\alpha):j<\lh(\bar{x})
  \rangle$.  
\item Assume $\alpha_1 \ne \alpha_2$ are from $u$ and $i<\lh(\bar{x})$ and
  $\eta_1,\eta_2 \in {}^{\eta_*(i)}2$ but 
\[(g_{x_i}(\alpha_1))^{-1} \circ f_{x_i}(\alpha_1))(\eta_1) \ne
   ((g_{x_i}(\alpha_2))^{-1} \circ f_{x_i}(\alpha_2))(\eta_2).\]
Then the sets $\{\rho:\eta_1 <_{t_{\bar{x},\alpha_1}} \rho\}$ and 
$\{\rho:\eta_2 <_{t_{\bar{x},\alpha_2}} \rho\}$ are disjoint. 
\end{enumerate}
\end{proposition}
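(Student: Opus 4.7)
My plan is to unfold the definitions carefully, establishing parts (1)--(4) first as preparatory compatibility statements about how $f_{x_j}(\alpha)$ and $g_{x_j}(\alpha)$ interact across successive levels, and then deducing the disjointness in (5) as an easy consequence of (3). The two key algebraic facts I will lean on throughout are clause (b) of Definition \ref{2q.1}(A), which says $f_y(\alpha)(\rho)\rest n^*_{i-1}=g_y(\alpha)(\rho)\rest n^*_{i-1}$ for $y\in\OB^u_i$, and the defining identity of $\suc(x)$, which says $g_x(\alpha)(\rho\rest n^*_i)=(f_y(\alpha)(\rho))\rest n^*_i$ whenever $y\in\suc(x)$ and $\rho\in{}^{n_*(i+1)}2$.

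For part (1), I combine the two facts above: for $\rho\in{}^{n_*(i+1)}2$ we get $g_{x_i}(\alpha)(\rho\rest n^*_i)=(f_{x_{i+1}}(\alpha)(\rho))\rest n^*_i=(g_{x_{i+1}}(\alpha)(\rho))\rest n^*_i$. Iterating this, the sequence $\langle g_{x_i}(\alpha)(\rho\rest n_*(i)):i\leq j\rangle$ is $\triangleleft$--increasing. Part (2) is then essentially by unwinding definitions: each $f_{x_j}(\alpha)$ belongs to $\cY_j$, so $\varkappa^\alpha_{\bar x}\in\bT_{\lh(\bar x)}$, and the order $<_{\varkappa^\alpha_{\bar x}}$ of Definition \ref{4d.3}(3) matches $<_{t^\alpha_{\bar x}}$ of Definition \ref{2q.1}(H) verbatim. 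For part (3), I argue by induction on $j-i$: the base case $j=i+1$ is exactly the $\suc$-identity combined with clause (b); for the inductive step, clause (b) applied at level $j$ gives $(f_{x_j}(\alpha)(\nu))\rest n^*_{j-1}=(g_{x_j}(\alpha)(\nu))\rest n^*_{j-1}$, which by part (1) equals $g_{x_{j-1}}(\alpha)(\nu\rest n^*_{j-1})$, and then restricting further to $n^*_i$ and using the induction hypothesis (or repeatedly applying (1)) gives the claim.

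For part (4), I induct on $j$: when $j=0$ we have $n_*(-1)=0$, so $\Per({}^02)$ is a singleton and the value of $e_{x_0}(\alpha)$ at that unique input determines $(f_{x_0}(\alpha),g_{x_0}(\alpha))$ outright; for the inductive step, the defining equation $e_{x_{j+1}}(\alpha)(g_{x_j}(\alpha))=(f_{x_{j+1}}(\alpha),g_{x_{j+1}}(\alpha))$ of Definition \ref{2q.1}(C) recovers the pair at level $j+1$ from the previous $g$ and the new $e$.

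Part (5) is where (3) pays off and is really the only content of the proposition. For $\rho\in{}^{n_*(j)}2$ with $j>i$, the relation $\eta_\ell<_{t_{\bar x,\alpha_\ell}}\rho$ unfolds to $f_{x_i}(\alpha_\ell)(\eta_\ell)\triangleleft f_{x_j}(\alpha_\ell)(\rho)$, i.e.\ $(f_{x_j}(\alpha_\ell)(\rho))\rest n^*_i=f_{x_i}(\alpha_\ell)(\eta_\ell)$. By part (3) the left-hand side equals $g_{x_i}(\alpha_\ell)(\rho\rest n^*_i)$, so after applying $(g_{x_i}(\alpha_\ell))^{-1}$ the condition becomes
\[\rho\rest n^*_i = \bigl((g_{x_i}(\alpha_\ell))^{-1}\circ f_{x_i}(\alpha_\ell)\bigr)(\eta_\ell).\]
The hypothesis of (5) says the two right-hand sides (for $\ell=1,2$) differ, so no single $\rho$ can be above both $\eta_1$ (in $t_{\bar x,\alpha_1}$) and $\eta_2$ (in $t_{\bar x,\alpha_2}$); the cones are disjoint. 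The main (mild) obstacle is just keeping the level shifts and the $f$/$g$ roles straight while chasing through Definition \ref{2q.1}; once the bookkeeping is set up, every part is a one- or two-line verification.
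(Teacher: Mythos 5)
Your proof is correct and follows essentially the same route as the paper: the paper dismisses (1)--(4) as ``straightforward induction on $j$'' (which your unfolding of clause~(b) of Definition~\ref{2q.1}(A) and the $\suc$-identity fills in), and its proof of (5) is exactly your argument -- unfold $\eta_\ell <_{t_{\bar x,\alpha_\ell}}\rho$ to $f_{x_i}(\alpha_\ell)(\eta_\ell)=(f_{x_j}(\alpha_\ell)(\rho))\rest n^*_i$, apply (3) to turn the right side into $g_{x_i}(\alpha_\ell)(\rho\rest n^*_i)$, and observe that both $\ell=1,2$ would force $\rho\rest n^*_i$ to equal two values assumed distinct. (One small shared loose end, inherited from the paper rather than introduced by you: for $j=0$ in (4), the conclusion that $e_{x_0}(\alpha)$ determines $(f_{x_0}(\alpha),g_{x_0}(\alpha))$ uses the convention made explicit only later, in Definition~\ref{2q.8}(I)($\zeta$), that $e_{x_0}(\alpha)$ evaluated at the unique element of $\Per({}^02)$ yields $(f_{x_0}(\alpha),g_{x_0}(\alpha))$; this is not literally imposed by Definition~\ref{2q.1}(A)(c) on $\OB^u_0$.)
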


\begin{proof}  
(1), (2), (3) and (4) can be shown by straightforward induction on $j$.

(5) Assume towards contradiction that
\begin{enumerate}
\item[$(*)_1$]  $\eta_1<_{t_{\bar{x},\alpha_1}}\rho$ and
  $\eta_2<_{t_{\bar{x}, \alpha_2}} \rho$.
\end{enumerate}
So $\rho\in t_{\bar{x},\alpha_2}$ and hence $\rho \in {}^{n_*(j)}2$ for some
$j<\lh(\bar{x})$. Since $\eta_1<_{t_{\bar{x},\alpha_1}}\rho$, necessarily
$i<j< \lh(\bar{x})$ and by the definition of $<_{t_{\bar{x},\alpha_1}}$ and
$<_{t_{\bar{x},\alpha_2}}$: 
\begin{enumerate}
\item[$(*)_2$]  $f_{x_i}(\alpha_1)(\eta_1)\triangleleft
  f_{x_j}(\alpha_1)(\rho)$ and $f_{x_i}(\alpha_2)(\eta_2) \triangleleft
  f_{x_j}(\alpha_2)(\rho)$.
\end{enumerate} 
This means that
\begin{enumerate}
\item[$(*)_3$]  $f_{x_i}(\alpha_1)(\eta_1) = (f_{x_j}(\alpha_1)(\rho)) \rest
  n^*_i$ and $f_{x_j}(\alpha_2)(\eta_2)= (f_{x_j}(\alpha_2)(\rho)) \rest
  n^*_i$.
\end{enumerate}
Consequently, by part (3), letting $\rho'=\rho\rest n^*_i$:
\begin{enumerate}
\item[$(*)_4$]  $f_{x_i}(\alpha_1)(\eta_1)=g_{x_i}(\alpha_1)(\rho')$ and
  $f_{x_i}(\alpha_2)(\eta_2)= g_{x_i}(\alpha_2)(\rho')$,
\end{enumerate}
and therefore
\begin{enumerate}
\item[$(*)_5$]  $((g_{x_i}(\alpha_1))^{-1} \circ f_{x_i}(\alpha_1))(\eta_1)
  = \rho' = ((g_{x_i}(\alpha_2))^{-1} \circ f_{x_i}(\alpha_2))(\eta_2)$,
\end{enumerate}
contradicting our assumptions. 
\end{proof}

Below we may replace the role of $D^u_i$ by $\{\langle (f_{x_j}(\alpha), 
g_{x_j}(\alpha)):j<i\rangle: \bar{x}\in\bS_{u,i}\}$.

\begin{definition}
\label{2q.8} 
For a finite set $u \subseteq {\rm Ord}$ and an integer $i < \omega$ we let  
\begin{enumerate}
\item[(I)]
\begin{enumerate}  
\item[$(\alpha)$] $D^u_i = \{(\alpha,g):\alpha \in u$ and
  $g\in\Per({}^{n_*(i-1)}2)$ if $i>0$, $g\in\Per({}^0 2)$ if $i=0\}$;

if $\bar x \in\bS_{u,i}$ and $\alpha \in u$, then stipulate
$g_{x_{-1}}(\alpha)$ is the unique $g\in\Per({}^0 2)$.
\item[$(\beta)$] $\pos^u_i$ is the set of all functions $h$ with domain
  $D^u_i$ such that $h(\alpha,g)$ is a pair $(h_1(\alpha,g),h_2(\alpha,g))$ 
  satisfying 
\begin{itemize}
\item $h_1(\alpha,g),h_2(\alpha,g) \in \Per({}^{n_*(i)}2)$, and
\item $(h_\ell(\alpha,g)(\rho))\rest n_*(i-1) = g(\rho\rest n_*(i-1))$
  for $\ell \in \{1,2\}$, $i > 0$ and $\rho \in {}^{n_*(i)}2$.
\end{itemize}
Also, for $h\in\pos^u_i$ and $w\subseteq u$ we let $h\upharpoonleft w=
h\rest D^w_i$.
\item[$(\gamma)$] $\wpos^u_i$ is the family of all functions
  $\cF:\pos^u_i\longrightarrow [0,1]$ which are not constantly zero, and 
\[\vpos^u_i=\Big\{\cF\in\wpos^u_i:{\rm range}(\cF)\subseteq
\big\{\frac{m}{2^{n_*(i)}}:m=0,1,\ldots 2^{n_*(i)}\big\}\Big\}.\]
If above we allow the constantly zero function instead of $\wpos^u_i,
\vpos^u_i$ we get $\ypos^u_i,\xpos^u_i$, respectively. A set
$A\subseteq\pos^u_i$ will be identified with its characteristic function
$\chi_A\in\vpos^u_i$.  
\item[$(\delta)$] For $\cF\in\wpos^u_i$ we let
\[\set(\cF)=\{h\in\pos^u_i:\cF(h)>0\}\quad \mbox{ and }\quad \|\cF\|=
\sum\{\cF(h): h\in\pos^u_i\}.\]
If $|\pos^u_i|\geq \|\cF\|\cdot (k^*_i)^{3^{k^*_i}-1}$, then we put
$\nor^0_i(\cF)=0$; otherwise we let 
\[\nor^0_i(\cF)=k^*_i-\log_3\Big(\log_{k^*_i}\Big(\frac{k^*_i\cdot
  |\pos^u_i|}{\|\cF\|}\Big)\Big).\]
\item[$(\varepsilon)$] For $\cF_1,\cF_2\in\wpos^u_i$ we let 
\begin{itemize}
\item $\cF_1\leq \cF_2$ if and only if $(\forall h\in\pos^u_i)( \cF_1(h)\leq
  \cF_2(h))$;
\item $(\cF_1+\cF_2)(h)=\cF_1(h)+\cF_2(h)$ and $(\cF_1\cdot
  \cF_2)(h)=\cF_1(h) \cdot \cF_2(h)$ for $h\in\pos^u_i$;
\item $[\cF_1]$ is the function from $\pos^u_i$ to $\{\frac{m}{2^{n_*(i)}}:
  m=0,1, \ldots, 2^{n_*(i)}\}$ given by 
\[[\cF_1](h)=\lfloor \cF_1(h)\cdot 2^{n_*(i)}\rfloor\cdot 2^{-n_*(i)}\qquad
\mbox{ for }h\in \pos^u_i.\]
\end{itemize}
\item[$(\zeta)$] For $\bar{x}\in\bS_{u,i}$ and $h\in\pos^u_i$ we let
  $\suc_{\bar{x}}(h)$ be $\bar{x}\conc\langle y \rangle$ where $y\in\OB^u_i$ 
  is defined by: 
\begin{itemize}
\item $(f_y(\alpha),g_y(\alpha))=h(\alpha,g_{x_{i-1}}(\alpha))$ for
$\alpha\in u$,
\item $e_y(\alpha)(\pi)=h(\alpha,\pi)$ for $\alpha\in u$ and $\pi\in 
\Per({}^{n_*(i-1)}2)$.
\end{itemize}
\end{enumerate}
\item[(J)] 
\begin{enumerate}
\item[$(\alpha)$] $\underline{\CR}^u_i$ is the set of all pairs $\gc=(\cF,m)
  = (\cF_{\gc}, m_{\gc})$ such that $m$ is a non-negative real and $\cF\in 
  \wpos^u_i$ and $\nor^0_i(\cF) \ge m$. We also let $\CR^u_i=\{\gc\in
  \underline{\CR}^u_i:\cF_\gc\in \vpos^u_i\}$.
\item[$(\beta)$] For $\gc\in\underline{\CR}^u_i$, we let $\nor^1_i(\gc)= 
  (\nor^0_i(\cF_{\gc}) -m_{\gc})$ and $\nor^2_i(\gc)=\log_{\ell^*_i}( 
  \nor^1_i(\gc))$ if non-negative and well defined, and it is
  zero otherwise. (Remember, $\ell^*_i=\beth(30i+10)$.) We will write
  $\nor_i(\gc)=\nor^2_i(\gc)$.  
\item[$(\gamma)$] For $\gc\in\underline{\CR}^u_i$ let
  $\underline{\Sigma}(\gc)$ be the set of all $\gd\in \CR^u_i$ such that
  $\cF_{\gd}\leq \cF_{\gc}$ and $m_{\gd}\ge m_{\gc}$. For $\gc\in
  \CR^u_i$ we let $\Sigma(\gc)=\underline{\Sigma}(\gc)\cap\CR^u_i$.  
\end{enumerate}
\item[(K)]  $\bbQ_u=(\bbQ_u,\leq_{\bbQ_u})$ is defined by 
\begin{enumerate}
\item[$(\alpha)$] conditions in $\bbQ_u$ are pairs $p=(\bar{x},\bar{\gc}) =
  (\bar{x}_p,\bar{\gc}_p)$ such that 
\begin{enumerate}
\item[(a)] $\bar{x}\in\bS_{u,i}$ for some $i=\bi(p)<\omega$, so $\bar{x}_p =
  \langle x_{p,j}:j<\bi(p)\rangle$, 
\item[(b)] $\bar{\gc}=\langle {\gc}_j:j\in [\bi(p),\omega)\rangle$, so
  ${\gc}_j = {\gc}^p_j$, and ${\gc}_j\in\CR^u_j$,
\item[(c)] the sequence $\langle \nor_j({\gc}_j):j\in [\bi(p),\omega)
  \rangle$ diverges to $\infty$; 
\end{enumerate}
\item[$(\beta)$]  $p \le_{\bbQ_u} q$\quad if and only if\quad (both are
from $\bbQ_u$ and)
\begin{enumerate}
\item[(a)] $\bar{x}_p \trianglelefteq \bar{x}_q$, and 
\item[(b)] if $\bi(p)\le j<\bi(q)$, then for some
  $h\in\set(\cF_{{\gc}^p_j})$ we have $\bar{x}_q\rest (j+1) =
  \suc_{\bar{x}_q \rest j}(h)$ (see clause (I)$(\zeta)$ above), 
\item[(c)] if $i\in [\bi(q),\omega)$, then ${\gc}^q_i\in\Sigma({\gc}^p_i)$.
\end{enumerate}
\end{enumerate}
$\underline{\bbQ}_u=(\underline{\bbQ}_u,\leq_{\underline{\bbQ}_u})$ is defined
similarly, replacing $\CR^u_j$, $\Sigma$ by $\underline{\CR}^u_j$,
$\underline{\Sigma}$, respectively.  
\item[(L)] If $u_1,u_2\subseteq{\rm Ord}$ are finite, $|u_1|=|u_2|$ and
  $h:u_1 \longrightarrow u_2$ is the order preserving bijection, then
  $\hat{h}$ is the isomorphism from $\bbQ_{u_1}$ onto $\bbQ_{u_2}$ induced
  by $h$ in a natural way.
\end{enumerate}
\end{definition}

\begin{proposition}
\label{2q.23}
Let $u\subseteq{\rm Ord}$ be a finite non-empty set, $i \in (1,\omega)$ and
$|u|\le n_*(i-1)$. Then
\begin{enumerate}
\item[(a)] $|\pos^u_{i-1}|<\beth(30i+3)$, $|\vpos^u_{i-1}|<\beth(30i+4)$, 
  $\nor^0_i(\pos^u_i)=k^*_i$ and $\nor_i(\gc_{u,i}^{\max})=
  \beth(30i+19)/\beth(30i+9)$ and $\CR^u_i=\Sigma(\gc_{u,i}^{\max})$,
  where $\gc^{\max}_{u,i}=(\pos^u_i,0)$. 
\item[(b)]  $|\bS_{u,i}|<\ell^*_i$ and if $\bar{x}\in\bS_{u,i}$ and
  $h\in\pos^u_i$, then $\suc_{\bar{x}}(h)\in \bS_{u,i+1}$. 
\item[(c)] If $\cF_1\leq \cF_2$ are from $\wpos^u_i$, then $0\leq
  \nor^0_i(\cF_1) \leq \nor^0_i(\cF_2)$.
\item[(d)] If $\gc\in\underline{\CR}^u_i$ and $\nor^1_i(\gc)\geq 1$, then
  $\gc$ has $k^*_i$--bigness with respect to $\nor^1_i$, which means that:\\ 
if $\cF_\gc=\sum\{\cY_k:k<k^*_i\}$ then $\nor^1_i(\gc)\le \max\{
\nor^1_i(\cY_m, m_\gc)+1:k<k^*_i\}$;\\
moreover, if $\cF'\leq\cF_{\gc}$, $\|\cF'\| \ge \|\cF_\gc\|/k^*_i$ then 
$\nor^0_i(\cF') \ge \nor^0_i(\cF_\gc)-1$. 
\item[(e)]  Both $\CR^u_i$ and $\underline{\CR}^u_i$ have halving with
  respect to $\nor^1_i$, that is  
\begin{enumerate}
\item[$(\alpha)$] if $\gc=(\cF_{\gc},m_{\gc})$, $m_1=(\nor^0_i(\cF_\gc) +
  m_\gc)/2$, $\gd=(\cF_\gc,m_1)$, then $\nor^1_i(\gd)\ge\nor^1_i(\gc)/2$,
  and  
\item[$(\beta)$] if $\gd'\in\Sigma(\gd)$ is such that $\nor^1_i(\gd') \ge
  1$, then $\gd'' := (\cF_{{\gd}'},m_\gc)$ satisfies 
\[\gd''\in \Sigma(\gc),\qquad \nor^1_i(\gd'') \ge\nor^1_i(\gc)/2\quad \mbox{
  and }\quad\cF_{\gd''} = \cF_{\gd'}.\] 
\end{enumerate}
\end{enumerate}
\end{proposition}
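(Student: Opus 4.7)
My plan is to treat the five clauses as independent bookkeeping exercises, each reducing to direct substitution into the definition of $\nor^0_i$ in \ref{2q.8}(I)$(\delta)$ together with the rapid beth-tower growth of $n_*(i),k^*_i,\ell^*_i$. I would proceed in the order (a)$\to$(b)$\to$(c)$\to$(d)$\to$(e), since clauses (c), (d), (e) depend on having the formula for $\nor^0_i$ well in hand.

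For (a), the cardinality bounds $|\pos^u_{i-1}|<\beth(30i+3)$ and $|\vpos^u_{i-1}|<\beth(30i+4)$ come from crude counting: $|D^u_j|\le |u|\cdot|\Per({}^{n_*(j-1)}2)|$, each element of $\pos^u_j$ is a function into pairs of permutations of ${}^{n_*(j)}2$ subject to the coherence condition of \ref{2q.8}(I)$(\beta)$, and then the beth tower absorbs all arithmetic at the relevant indices since $n_*(j)=\beth(30j+30)$ grows much faster than $\beth(30j+\mathrm{const})$. The identity $\nor^0_i(\pos^u_i)=k^*_i$ comes by substituting $\|\chi_{\pos^u_i}\|=|\pos^u_i|$ into the defining formula, which collapses the inner quantity to $k^*_i$, so $\log_{k^*_i}(k^*_i)=1$ and $\log_3(1)=0$; plugging this into $\nor^2_i$ (with $m=0$) yields $\log_{\ell^*_i}(k^*_i)=\beth(30i+19)/\beth(30i+9)$ via $\log_2(\beth(n+1))=\beth(n)$. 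The equality $\CR^u_i=\Sigma(\gc^{\max}_{u,i})$ is then immediate from $\cF_\gc\le\chi_{\pos^u_i}$ and $m_\gc\ge 0$. Clause (b) proceeds by a parallel counting of $\bS_{u,i}$ using $|\suc(x)|\le|\pos^u_i|$ (and Proposition \ref{2q.3}(4), which shows that an element is determined by its $e$-coordinates); the closure statement $\suc_{\bar x}(h)\in\bS_{u,i+1}$ is read off the definitions of $\suc_{\bar x}$ and $\OB^u_{i+1}$, since the coherence condition on $h\in\pos^u_i$ is exactly what clause (C) of \ref{2q.1} demands. Clause (c) is easiest: $\cF_1\le\cF_2$ forces $\|\cF_1\|\le\|\cF_2\|$, so the inner quantity is larger for $\cF_1$ and hence $\nor^0_i(\cF_1)\le\nor^0_i(\cF_2)$; non-negativity is built into the case split.

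Clause (d) is where the calculation gets delicate, and I expect it to be the main obstacle. Writing $A_\cF:=\log_{k^*_i}\!\big(k^*_i|\pos^u_i|/\|\cF\|\big)$, the defining formula reads $\nor^0_i(\cF)=k^*_i-\log_3(A_\cF)$ in the non-trivial regime $A_\cF<3^{k^*_i}$; note that $\|\cF\|\le|\pos^u_i|$ forces $A_\cF\ge 1$. Given a decomposition $\cF_\gc=\sum_{k<k^*_i}\cY_k$, some $k^*$ satisfies $\|\cY_{k^*}\|\ge\|\cF_\gc\|/k^*_i$, which translates into $A_{\cY_{k^*}}\le A_{\cF_\gc}+1$. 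The critical elementary bound is $\log_3(A+1)-\log_3(A)\le 1$ for every $A\ge 1/2$, since $(A+1)/A\le 3$; applied at $A=A_{\cF_\gc}\ge 1$ this yields $\nor^0_i(\cY_{k^*})\ge\nor^0_i(\cF_\gc)-1$, and subtracting $m_\gc$ delivers the bigness inequality. The ``moreover'' sentence is the same computation applied to any single $\cF'\le\cF_\gc$ with $\|\cF'\|\ge\|\cF_\gc\|/k^*_i$.

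Clause (e) is a direct consequence of the definitions. For $(\alpha)$, $\nor^1_i(\gd)=\nor^0_i(\cF_\gc)-m_1=\tfrac12(\nor^0_i(\cF_\gc)-m_\gc)=\tfrac12\nor^1_i(\gc)$. For $(\beta)$, from $\nor^1_i(\gd')\ge 1$ we deduce $\nor^0_i(\cF_{\gd'})\ge m_{\gd'}+1\ge m_1+1$, whence $\gd'':=(\cF_{\gd'},m_\gc)$ lies in $\Sigma(\gc)$ with $\cF_{\gd''}=\cF_{\gd'}$ and $\nor^1_i(\gd'')=\nor^0_i(\cF_{\gd'})-m_\gc\ge m_1+1-m_\gc=\nor^1_i(\gc)/2+1\ge\nor^1_i(\gc)/2$, as required.
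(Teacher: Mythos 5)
Your proposal is correct and follows essentially the same route as the paper: the cardinality bounds in (a), (b) by crude counting and beth-tower absorption, the norm identities by direct substitution, and clause (d) via the inequality $\log_3(A+1)\le\log_3(A)+1$ for $A\ge 1/2$, which is exactly the paper's bound $\log_{k^*_i}(A\cdot k^*_i)\le 3\log_{k^*_i}(A)$ in disguise. The paper dismisses (c) and (e) as ``obvious''; your explicit arithmetic for them is correct, in particular the chain $\nor^1_i(\gd'')=\nor^0_i(\cF_{\gd'})-m_\gc\ge m_1+1-m_\gc=\nor^1_i(\gc)/2+1$ and the verification that $\nor^0_i(\cF_{\gd''})\ge m_\gc$ so that $\gd''\in\Sigma(\gc)$.
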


\begin{proof}
{\em Clause (a)\/}:\quad  Clearly by the definition $\gc^{\max}_{u,i}= 
(\pos^u_i,0)\in\CR^u_i=\Sigma(\gc^{\max}_{u,i})$ and  
\[\nor^0_i(\pos^u_i) =k^*_i-\log_3\big(\log_{k^*_i}(k^*_i)\big)=k^*_i,\]  
so $\nor_i^1(\gc^{\max}_{u,i}) = k^*_i-0 = k^*_i$ and
$\nor_i(\gc^{\max}_{u,i})=\log_{\ell^*_i}(k^*_i)=\log_{\beth(30i+10)}
\big(\beth(30i+20)\big)=\log_2\big(\beth(30i+20)\big)/ \log_2\big(
\beth(30i+10)\big)=\beth(30i+19)/\beth(30i+9)$. Now, for every $j>0$,
letting $A_j=\Per({}^{n_*(j)}2)\times \Per({}^{n_*(j)}2)$ and recalling
\ref{2q.8}(I)($\alpha$), we have 
\[|D^u_j|\le (2^{n_*(j-1)}!) \times |u| \le 2^{(2^{n_*(j-1)})^2} \times
|u|\quad\mbox{ and }\quad |A_j| \le (2^{n_*(j)}!)^2\le 2^{2^{2n_*(j)+1}}\le
2^{2^{3n_*(j)}}.\]  
Since $|u|\leq  n_*(i-1)$, we get $|D^u_j|\leq 2^{2^{2n_*(j-1)}}\times
n_*(i-1)$. Since $2^{2^{2n_*(i-2)}}\leq n_*(i-1)$, $n_*(i-1)^2\leq
2^{n_*(i-1)}$ and $4n_*(i-1)+1\leq 2^{n_*(i-1)}$, we conclude now that   
\[|\pos^u_{i-1}|\leq |A_{i-1}|^{|D^u_{i-1}|} \le
(2^{2^{3n_*(i-1)}})^{|D^u_{i-1}|} \leq 2^{2^{3n_*(i-1)}\times
2^{2^{2n_*(i-2)}}\times n_*(i-1)}\leq 2^{2^{4n_*(i-1)}}<\beth(30i+3)\]
and 
\[|\vpos^u_{i-1}|=(2^{n_*(i-1)}+1)^{|\pos^u_{i-1}|}<2^{(n_*(i-1)+1)\times
  2^{2^{4n_*(i-1)}}} < 2^{2^{2^{4n_*(i-1)+1}}}<\beth(30i+4).\]  
\smallskip

\noindent {\em Clause (b)\/}:\quad Let $B_j$ be the set of all functions
from $\Per({}^{n_*(j-1)}2)$ to $\Per({}^{n_*(j)}2)\times
\Per({}^{n_*(j)}2)$. Then we have
\[|B_j|=\Big(2^{n_*(j)}!\Big)^{2\cdot (2^{n_*(j-1)}!)}\leq
2^{2^{2n_*(j)}\cdot 2\cdot (2^{n_*(j-1)}!)} \leq 2^{2^{4n_*(j)}}\]
and hence for $j<i$:
\[\begin{array}{r}
\displaystyle 
|\OB^u_j|\leq |{}^u\Per({}^{n_*(j)}2)|\cdot |{}^u\Per({}^{n_*(j)}2)|\cdot 
|{}^u B_j|\leq \big(2^{n_*(j)}!)^{2|u|}\cdot 2^{2^{4n_*(j)}\cdot |u|}\leq\\
\ \\
\displaystyle 
2^{2^{2n_*(j)+1}\cdot |u|+2^{4n_*(j)}\cdot |u|}\leq 2^{2^{7n_*(j)}\cdot
  n_*(i-1)}\leq 2^{2^{8n_*(i-1)}}.
\end{array}\] 
Therefore,
\[|\bS_{u,i}|\le\prod\limits_{j<i}|\OB^u_j| \le
(2^{2^{8n_*(i-1)}})^i<2^{2^{9n_*(i-1)}}<\ell^*_i.\]
\smallskip

\noindent {\em Clause (d)\/}:\quad  Assume $\gc\in\underline{\CR}^u_i$ and
$\cF_\gc =\sum\{\cY_k:k < k^*_i\}$, hence $\|\cF_\gc\|=\sum\{\|\cY_k\|:k<
k^*_i\}$. Let $k(*)<k^*_i$ be such that $\|\cY_{k(*)}\|$ is maximal. Plainly 
$\|\cF_\gc\|\le k^*_i\times \|\cY_{k(*)}\|$ and therefore it 
suffices to prove the ``moreover'' part. So assume $\cY\leq \cF_\gc$,
$\|\cF_\gc\|\le k^*_i \times \|\cY\|$. Then  
\[\begin{array}{l}
\displaystyle 
  \nor^0_i(\cY)=k^*_i-\log_3\Big(\log_{k^*_i}\Big(\frac{k^*_i\cdot
  |\pos^u_i|}{\|\cY\|}\Big)\Big) \geq k^*_i-\log_3 \Big(\log_{k^*_i}
  \Big(\frac{k^*_i\cdot |\pos^u_i|}{\|\cF_\gc\|}\cdot k^*_i\Big)\Big)\geq\\
\ \\
\displaystyle 
k^*_i-\log_3\Big(3\log_{k^*_i}\Big(\frac{k^*_i\cdot
  |\pos^u_i|}{\|\cF_\gc\|}\Big)\Big)=\nor^0_i(\cF_\gc)-1,
\end{array}\]
so we are done.
\medskip

\noindent {\em Clauses (c) and (e)\/}:\quad  Obvious. 
\end{proof}

\begin{observation}
\label{2q.9}
\begin{enumerate}
\item $\bbQ_u$, $\underline{\bbQ}_u$ are non-trivial partial orders.
\item $\bbQ_u$ is a dense subset of $\underline{\bbQ}_u$.
\end{enumerate}
\end{observation}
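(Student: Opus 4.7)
The plan is to handle the two parts in order.

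For part (1), non-triviality is witnessed by the condition $p_*:=(\langle\rangle,\langle \gc^{\max}_{u,j}:j<\omega\rangle)$ with $\gc^{\max}_{u,j}=(\pos^u_j,0)$; Proposition \ref{2q.23}(a) gives $\nor_j(\gc^{\max}_{u,j})=\beth(30j+19)/\beth(30j+9)\to\infty$, so $p_*\in\bbQ_u\subseteq\underline{\bbQ}_u$. Reflexivity of both orders is immediate from the definitions. For transitivity, assuming $p\le q\le r$, clauses (a) and (c) of (K)($\beta$) are transparent (the latter is transitivity of $\Sigma$, inherited from pointwise $\le$ on $\wpos$). The one clause to check is (b) at a level $j\in[\bi(p),\bi(r))$: if $j<\bi(q)$ I would take the witness $h$ from $p\le q$, using $\bar x_r\rest(j+1)=\bar x_q\rest(j+1)$; if $j\ge\bi(q)$ I would take the witness $h\in\set(\cF_{\gc^q_j})$ from $q\le r$, which also lies in $\set(\cF_{\gc^p_j})$ because $\cF_{\gc^q_j}\le\cF_{\gc^p_j}$.

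For part (2), given $p\in\underline{\bbQ}_u$, my plan is to produce $q\in\bbQ_u$ with $p\le_{\underline{\bbQ}_u}q$ by rounding each $\cF^p_j$ down to $[\cF^p_j]\in\vpos^u_j$ (see \ref{2q.8}(I)($\varepsilon$)). Concretely, I would choose $N$ so large that $\nor^1_j(\gc^p_j)\ge 1$ for all $j\ge\bi(p)+N$, set $\bi(q)=\bi(p)+N$, extend $\bar x_p$ by arbitrary $h_j\in\set(\cF^p_j)$ (nonempty, since $\cF^p_j\in\wpos^u_j$ is not identically zero) for $\bi(p)\le j<\bi(q)$, and for $j\ge\bi(q)$ set $\gc^q_j:=([\cF^p_j],m^p_j)$.

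The main obstacle is to verify that each $\gc^q_j\in\CR^u_j$ and that $\nor_j(\gc^q_j)\to\infty$. The key estimate is $\|[\cF^p_j]\|\ge\|\cF^p_j\|-|\pos^u_j|\cdot 2^{-n_*(j)}$. When $\nor^0_j(\cF^p_j)\ge 1$, unwinding the definition of $\nor^0_j$ yields $\|\cF^p_j\|\ge|\pos^u_j|\cdot(k^*_j)^{1-3^{k^*_j-1}}$; I would then compare $\beth$-levels using $k^*_j=\beth(30j+20)$, $2^{n_*(j)}=\beth(30j+31)$, and $|\pos^u_j|\le\beth(30j+33)$ from \ref{2q.23}(a) to show this vastly dominates $|\pos^u_j|\cdot 2^{-n_*(j)}$. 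Hence $\|[\cF^p_j]\|\ge\|\cF^p_j\|/2\ge\|\cF^p_j\|/k^*_j$, so $[\cF^p_j]\in\vpos^u_j$; and the bigness clause \ref{2q.23}(d) then gives $\nor^0_j([\cF^p_j])\ge\nor^0_j(\cF^p_j)-1\ge m^p_j$. Thus $\gc^q_j\in\CR^u_j$. Finally, $\nor^1_j(\gc^q_j)\ge\nor^1_j(\gc^p_j)-1$ and the passage through $\log_{\ell^*_j}$ costs only a vanishing additive error, so $\nor_j(\gc^q_j)\to\infty$; this $\beth$-level bookkeeping is precisely where care is needed, but everything else falls out routinely.
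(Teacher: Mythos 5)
Your proposal is correct and takes essentially the same route as the paper: part (1) is dismissed as routine, and for part (2) both you and the paper round each $\cF^p_j$ to $[\cF^p_j]\in\vpos^u_j$ and verify via the estimate $\|[\cF^p_j]\|\ge\|\cF^p_j\|-|\pos^u_j|\cdot 2^{-n_*(j)}$ that $\nor^0_j$ drops by at most $1$. The only small difference is that you explicitly extend the stem to a level where $\nor^1_j\ge 1$ (and spell out the transitivity check in part (1)), whereas the paper compresses these to ``we may assume'' and ``should be clear.''
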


\begin{proof}
(1)\quad Should be clear.

\noindent (2)\quad For $\gc\in\underline{\CR}^u_i$ such that
$\nor^1_i(\gc)>1$ we set $[\gc]=([\cF_\gc],m_\gc)$ (see
\ref{2q.8}(I)($\varepsilon$)). Note that 
$\frac{\|[\cF_\gc]\|}{|\pos^u_i|} \geq\frac{\|\cF_\gc\|}{|\pos^u_i|}- 
\frac{1}{2^{n_*(i)}}$ and hence (as $(k^*_i)^{3^{k^*_i}}<2^{n_*(i)}$ and
$\frac{\|\cF_\gc\|}{|\pos^u_i|}> (k^*_i)^{1-3^{k^*_i}}$) we have
$\frac{\|[\cF_\gc]\|}{|\pos^u_i|}\geq \Big(\frac{\|\cF_\gc\|}{|
  \pos^u_i|}\Big)^3\cdot \frac{1}{k^2}$ and hence easily
$\nor^0_i([\cF_\gc])\geq \nor^0_i(\cF_\gc)-1$. Consequently, $[\gc]\in  
\CR^u_i$ and $\nor^1_i([\gc])\geq \nor^1_i(\gc)-1$. 

Now suppose that $p\in \underline{\bbQ}_u$. We may assume that
$\nor_i(\gc^p_i)>1$ for all $i\geq \bi(p)$. Put $\bi(q)=\bi(p)$,
$\gc^q_i=[\gc^p_i]$ for $i\geq \bi(q)$ and $\bar{x}_q=\bar{x}_p$. Then
$q=(\bar{x}_q,\langle \gc^q_i:i\geq \bi(q)\rangle)\in\bbQ_u$ is a condition 
stronger than $p$.
\end{proof}

\begin{definition}
\label{2q.34}
Let $u \subseteq \text{ Ord}$ be a finite non-empty set.
\begin{enumerate}
\item Let $\name{\bar{x}}$ and $\name{\varkappa}_\alpha,\name{t}_\alpha$ for
  $\alpha\in u$ be the following $\bbQ_u$-names: 
\begin{enumerate}
\item[(a)]  $\name{\bar{x}}=\name{\bar{x}}_u=\bigcup\{ \bar{x}_p: p\in 
  \name{\bG}_{\bbQ_u}\}$ and $\name{\varkappa}_\alpha=\langle
  \name{\pi}_{\alpha,i}:i<\omega\rangle$, where 
\[\name{\pi}_{\alpha,i}[\name{\bG}_{\bbQ_u}]=\pi\quad \mbox{ if and only if
\quad  for some }p\in\name\bG\mbox{ we have }\lh(\bar{x}_p)>i\mbox{ and }
f_{x_{p,i}}(\alpha)=\pi.\]  
\item[(b)] $\name{t}_\alpha=t^*_{\name{\varkappa}_\alpha}$, i.e., it is a
  tree (see \ref{4d.3}(4)).
\end{enumerate}
\item For $p\in\bbQ_u$ let $\pos(p)=\{\bar{x}_q:p \le_{\bbQ_u} q\}$ and for
  $\bar{x}\in\pos(p)$ let $p^{[\bar{x}]}=(\bar{x}, \langle \gc^p_i:i \in
  [\lh(\bar{x}),\omega)\rangle)$. 
\end{enumerate}
\end{definition}

\begin{observation}
\label{2q.37}
Let $u\subseteq\text{\rm Ord}$ be a finite non-empty set, $\alpha \in
u$. Then:  
\begin{enumerate}
\item $\Vdash_{\bbQ_u}$`` $\name{\bar{x}}\in \bS_{u,\omega}$''. 
\item We can reconstruct $\name{\bG}_{\bbQ_u}$ from $\name{\bar{x}}$. As a
  matter of fact, $\langle e_{\name{\bar{x}}_i}:i<\omega\rangle$ determines
  $\langle f_{\name{\bar{x}}_i},g_{\name{\bar{x}}_i}:i<\omega\rangle$ (and
  also $\name{\bG}_{\bbQ_u}$). 
\item  $\name{\varkappa}_\alpha=\bigcup\{\varkappa_{\bar{x}}^\alpha: \bar{x}
  = \bar{x}_p$ and  $p\in\name{\bG}_{\bbQ_u}\}$.
\item $\Vdash_{\bbQ_u}$`` $\name{\varkappa}_\alpha\in\bT_\omega$ ''.
\item If $h:u\longrightarrow {\rm Ord}$ is one-to-one, then $\hat{h}$ (see
  \ref{2q.8}(L)) maps $\name{\bar{x}}_u$ to $\name{\bar{x}}_{h[u]}$,
  $(\name{\bar{x}}_u)_i$ to $(\name{\bar{x}}_{h[u]})_i$, etc. 
\end{enumerate}
\end{observation}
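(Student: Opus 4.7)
The statement splits into the definitional claims (1)--(3) about how $\name{\bar x}$ encodes the generic filter, and the structural corollaries (4)--(5). My plan is to begin with a density argument for (1). Given $p\in\bbQ_u$ and $i\ge\bi(p)$, since $\gc^p_j\in\CR^u_j$ entails $\cF_{\gc^p_j}\in\wpos^u_j$ is not constantly zero, $\set(\cF_{\gc^p_j})\neq\emptyset$ for every $j$; pick any $h_j\in\set(\cF_{\gc^p_j})$ for $j\in[\bi(p),i]$ and iteratively form $\bar x_q\rest(j{+}1)=\suc_{\bar x_q\rest j}(h_j)$. By \ref{2q.23}(b) this yields $\bar x_q\in\bS_{u,i+1}$, and then $q=(\bar x_q,\langle \gc^p_k:k\ge i{+}1\rangle)\ge_{\bbQ_u}p$ with $\bi(q)=i{+}1$. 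Hence the set $\{p:\bi(p)>i\}$ is dense for every $i$, so $\name{\bar x}$ is of length $\omega$; clauses (a),(b) of \ref{2q.8}(K)($\beta$) ensure that the $\bar x_p$'s along the generic are $\trianglelefteq$--compatible and that successive increments come from $\suc$, so indeed $\name{\bar x}\in\bS_{u,\omega}$.

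For (2), the definition of $\suc$ in \ref{2q.1}(C) encodes the identity $e_{x_{j+1}}(\alpha)(g_{x_j}(\alpha))=(f_{x_{j+1}}(\alpha),g_{x_{j+1}}(\alpha))$, and a routine induction on $j$ starting from the base $g_{x_{-1}}(\alpha)$ stipulated in \ref{2q.8}(I)($\alpha$) recovers $(f_{x_j}(\alpha),g_{x_j}(\alpha))$ from $\langle e_{x_k}(\alpha):k\le j\rangle$. To recover $\name\bG_{\bbQ_u}$ from $\name{\bar x}$ I would observe that $p\in\name\bG$ iff $\bar x_p\triangleleft\name{\bar x}$ and, for every $j\in[\bi(p),\omega)$, the unique $h_j\in\pos^u_j$ satisfying $\name{\bar x}\rest(j{+}1)=\suc_{\name{\bar x}\rest j}(h_j)$ lies in $\set(\cF_{\gc^p_j})$: the forward direction is clause (b) of \ref{2q.8}(K)($\beta$), and the reverse holds by genericity, since any such $p$ is compatible with every condition already in $\name\bG$. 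Then (3) is a direct unwinding of \ref{2q.34}(1)(a) through the formula $\varkappa^\alpha_{\bar x}=\langle f_{x_j}(\alpha):j<\lh(\bar x)\rangle$ of \ref{2q.1}(G), and (4) follows by applying \ref{2q.3}(2) to the generic $\name{\bar x}\in\bS_{u,\omega}$ produced in (1).

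For (5) the key remark is that the data defining $\bbQ_u$ in \ref{2q.1} and \ref{2q.8} depends on $u$ only through the labelling of coordinates by $\alpha\in u$, so the order preserving bijection $h:u\to h[u]$ induces a literal isomorphism $\hat h:\bbQ_u\to\bbQ_{h[u]}$ commuting with the constructions $\bar x_p\mapsto\hat h(\bar x_p)$, $\gc^p_j\mapsto\hat h(\gc^p_j)$, $\suc$, and the triples $(f,g,e)$; consequently $\hat h(\name{\bar x}_u)=\name{\bar x}_{h[u]}$ coordinatewise, and the other clauses of (5) follow by the same functoriality. I do not anticipate any genuine obstacle: the whole observation is a matter of unpacking definitions, the only non-trivial input being the density verification in (1), which relies on the norm-divergence requirement \ref{2q.8}(K)($\alpha$)(c) (together with $\cF\in\wpos^u_j$ not being constantly zero) to guarantee $\set(\cF_{\gc^p_j})\neq\emptyset$ throughout.
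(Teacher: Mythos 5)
The paper states this observation without proof, so there is no argument to compare against; I assess yours on its own. Parts (1), (3), (4), (5) are correct: the density-of-long-trunks argument (using that $\cF\in\wpos^u_j$ is never constantly zero, so $\set(\cF_{\gc^p_j})\ne\emptyset$), the unwinding of Definition \ref{2q.34}, the appeal to Proposition \ref{2q.3}(2), and the symmetry under order-preserving bijections all work as you say; the fact that $\langle e_{\name{\bar x}_i}(\alpha)\rangle$ determines $\langle f_{\name{\bar x}_i}(\alpha),g_{\name{\bar x}_i}(\alpha)\rangle$ is precisely Proposition \ref{2q.3}(4).

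Your reconstruction of $\name\bG_{\bbQ_u}$ in part (2) does not work, however. You assert that $p\in\name\bG$ iff $\bar x_p\triangleleft\name{\bar x}$ and each increment $h_j$ lies in $\set(\cF_{\gc^p_j})$, and you justify the reverse inclusion by the claim that any such $p$ ``is compatible with every condition already in $\name\bG$''. That compatibility claim is false: two conditions with identical trunks and with the same increments lying in both $\set$'s can still be incompatible, because $\leq_{\bbQ_u}$ also requires, for each $i\ge\bi(q)$, a common refinement $\gd\in\Sigma(\gc^p_i)\cap\Sigma(\gc^q_i)$ with the norms still diverging, and $\Sigma$ involves the auxiliary component $m_\gc$, not just the support of $\cF_\gc$. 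Concretely, let $p^2$ have empty trunk and $\gc^{p^2}_j=(\pos^u_j, k^*_j/2)$, so $\set(\cF_{\gc^{p^2}_j})=\pos^u_j$ contains every possible increment while $\nor^1_j(\gc^{p^2}_j)=k^*_j/2$ still makes $\nor_j$ diverge; and let $p^1$ have empty trunk and $\gc^{p^1}_j=(A_j,0)$ for some set $A_j\subseteq\pos^u_j$ with $\nor^0_j(A_j)$ about $k^*_j/4$, so $\nor_j(\gc^{p^1}_j)$ diverges too. Both are legitimate conditions, yet $p^1\perp p^2$: any $\gd\in\underline\Sigma(\gc^{p^1}_j)\cap\underline\Sigma(\gc^{p^2}_j)$ would need $\cF_\gd\leq\chi_{A_j}$, hence $\nor^0_j(\cF_\gd)\leq\nor^0_j(A_j)$ by Proposition \ref{2q.23}(c), while $m_\gd\geq k^*_j/2>\nor^0_j(A_j)$, which is impossible. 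Forcing below $p^1$ then gives $p^2\notin\name\bG$, although $p^2$ satisfies your criterion for every $j$ (its $\set$ is everything). So the set you define strictly contains $\name\bG$, and a correct reconstruction of the filter from $\name{\bar x}$ must use more of the condition than whether the increments lie in $\set(\cF_{\gc^p_j})$.
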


\begin{observation}
\label{2q.40}
\begin{enumerate}
\item $p^{[\bar{x}]}\in\bbQ_u$ and $p\le_{\bbQ_u} p^{[\bar{x}]}$ for every
  $\bar{x} \in \pos(p)$. 
\item If $p\in\bbQ_u$ and $i\in [\lh(\bar{x}_p),\omega)$, then the set
$\cI_{p,i}:=\{p^{[\bar{x}]}:\bar{x}\in\pos(p)\cap \bS_{u,i}\}$
is predense above $p$ in $\bbQ_u$.
\end{enumerate}
\end{observation}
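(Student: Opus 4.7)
The plan is to verify part (1) by direct inspection of the definitions of $\bbQ_u$ and its order, and to reduce part (2) to a truncation argument after (if necessary) first extending the given condition to have length at least $i$.

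For (1), I will fix $\bar{x}\in\pos(p)$ witnessed by some $q\ge_{\bbQ_u}p$ with $\bar{x}_q=\bar{x}$, writing $n=\lh(\bar{x})=\bi(q)\ge\bi(p)$. The three clauses of \ref{2q.8}(K)$(\alpha)$ for $p^{[\bar{x}]}=(\bar{x},\langle\gc^p_j:j\in[n,\omega)\rangle)$ will be immediate: (a) since $\bar{x}\in\bS_{u,n}$, and (b)-(c) because $\langle\gc^p_j:j\ge n\rangle$ is a tail of the original sequence in $p$, so the norms still diverge. The three clauses of \ref{2q.8}(K)$(\beta)$ for $p\le_{\bbQ_u}p^{[\bar{x}]}$ will be equally easy: $\bar{x}_p\trianglelefteq\bar{x}$ from $p\le q$; for $j\in[\bi(p),n)$ the $h\in\set(\cF_{\gc^p_j})$ with $\bar{x}\rest(j+1)=\suc_{\bar{x}\rest j}(h)$ required by (b) is exactly the one supplied by $p\le q$; and (c) is trivial since $\gc^p_j\in\Sigma(\gc^p_j)$.

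For (2), given $q\ge_{\bbQ_u}p$, I will produce $r\in\cI_{p,i}$ and $q'\ge_{\bbQ_u}q$ with $r\le_{\bbQ_u}q'$. If $\bi(q)\ge i$, I take $q'=q$. Otherwise, for each $j\in[\bi(q),i)$ I choose $h_j\in\set(\cF_{\gc^q_j})$---nonempty since $\cF_{\gc^q_j}\in\vpos^u_j\subseteq\wpos^u_j$ is not identically zero---define $\bar{x}_{q'}$ inductively by $\bar{x}_{q'}\rest\bi(q)=\bar{x}_q$ and $\bar{x}_{q'}\rest(j+1)=\suc_{\bar{x}_{q'}\rest j}(h_j)$, and set $q'=(\bar{x}_{q'},\langle\gc^q_j:j\ge i\rangle)\in\bbQ_u$, which clearly extends $q$. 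In either case $\bi(q')\ge i$. I then put $\bar{x}^*=\bar{x}_{q'}\rest i$ and $r:=(\bar{x}^*,\langle\gc^p_j:j\ge i\rangle)$. Verifying $p\le_{\bbQ_u}r$ will simultaneously place $\bar{x}^*$ in $\pos(p)$ and identify $r$ with $p^{[\bar{x}^*]}\in\cI_{p,i}$; only clause (b) of \ref{2q.8}(K)$(\beta)$ will require attention, and it holds for $j\in[\bi(p),\min(\bi(q),i))$ by inheritance from $p\le q$, and for $j\in[\bi(q),i)$ (when this range is nonempty) via the chosen $h_j$ together with the inclusion $\set(\cF_{\gc^q_j})\subseteq\set(\cF_{\gc^p_j})$ coming from $\gc^q_j\in\Sigma(\gc^p_j)$. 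Finally $r\le_{\bbQ_u}q'$ is routine: (a) by construction, (b) vacuous when $\bi(q')=i$ and otherwise (i.e., $q'=q$ and $\bi(q)\ge i$) supplied directly by $p\le q$ since $\gc^r_j=\gc^p_j$, and (c) because $\gc^{q'}_j\in\Sigma(\gc^p_j)=\Sigma(\gc^r_j)$ for $j\ge i$.

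The only real content beyond unpacking definitions is the inclusion $\set(\cF_{\gc^q_j})\subseteq\set(\cF_{\gc^p_j})$, which is immediate from $\cF_{\gc^q_j}\le\cF_{\gc^p_j}$ in \ref{2q.8}(J)$(\gamma)$, together with the nonemptiness of $\set(\cF_{\gc^q_j})$ that allows us to extend $q$ to length exactly $i$; the main (and essentially only) obstacle is keeping the two cases $\bi(q)\ge i$ and $\bi(q)<i$ straight throughout the verification.
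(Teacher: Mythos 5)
Your proof is correct, and since the paper states Observation \ref{2q.40} without any explicit argument (as an ``Observation''), your direct verification from Definitions \ref{2q.8} and \ref{2q.34} is exactly the intended one. The only substantive step beyond bookkeeping is indeed the inclusion $\set(\cF_{\gc^q_j})\subseteq\set(\cF_{\gc^p_j})$ (from $\cF_{\gc^q_j}\le\cF_{\gc^p_j}$ in \ref{2q.8}(J)($\gamma$)), together with the nonemptiness of $\set(\cF)$ for $\cF\in\vpos^u_j$ and the closure of $\bS_u$ under $\suc$ from \ref{2q.23}(b), all of which you invoke correctly.
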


\begin{proposition}
\label{2q.43}
$\bbQ_u$ is a proper ${}^\omega\omega$--bounding forcing notion with rapid
continuous reading of names, i.e., if $p\in\bbQ_u$ and $p\Vdash$``
$\name{h}$ is a function from $\omega$ to $\bV$ '', then for some
$q\in\bbQ_u$ we have: 
\begin{enumerate}
\item[(a)] $p \le q$ and $\bi(p) = \bi(q)$, 
\item[(b)] for every $i<\omega$ the set $\{y:q\nVdash_{\bbQ_u}$``
  $\name{h}(i) \ne y$ ''$\}$ is finite, moreover, for some $j\in
  [\lh(\bar{x}_q),\omega)$, for each $\bar{x}\in\pos(q)\cap\bS_{u,j}$ the
  condition $q^{[\bar{x}]}$ forces a value to $\name{h}(i)$, 
\item[(c)] if $p\Vdash_{\bbQ_u}$`` $(\forall i<\omega)(\name{h}(i)< k^*_i)$
  '', then:  
\begin{enumerate}
\item[$(\circledast)$] if $\bar{x}\in\pos(q)$ has length $i>\bi(q)$, then 
  $q^{[\bar{x}]}$ forces a value to $\name{h}(i)$. 
\end{enumerate}
\end{enumerate}
\end{proposition}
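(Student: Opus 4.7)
The plan is a standard creature-forcing fusion argument, exploiting the halving (\ref{2q.23}(e)) and $k^*_i$-bigness (\ref{2q.23}(d)) of the norms. I will set $\bi(q)=\bi(p)$ and $\bar{x}_q=\bar{x}_p$, and build $\gc^q_i\in\Sigma(\gc^p_i)$ for $i\ge\bi(p)$ by induction on $i$, driven by a bookkeeping enumeration of pairs $(\bar{x},n)$ with $\bar{x}\in\bigcup_{j\ge\bi(p)}\bS_{u,j}$ and $n<\omega$, arranged so that each pair is scheduled for some stage at which $\bar{x}$ is a length-$i$ tuple belonging to the current $\pos$ of the partial condition.

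At stage $i$, with $q^{\rm pre}:=(\bar{x}_p,\langle\gc^q_j:j<i\rangle\conc\langle\gc^p_j:j\ge i\rangle)$ and $S_i:=\pos(q^{\rm pre})\cap\bS_{u,i}$ (finite, of size $<\ell^*_i$ by \ref{2q.23}(b)), if the scheduled pair is $(\bar{x}_*,n_*)$ with $\bar{x}_*\in S_i$, the task is to refine $\gc^p_i$ so that $(q^{\rm pre})^{[\bar{x}_*]}$ (with the new creature in slot $i$) decides $\name{h}(n_*)$. I halve via \ref{2q.23}(e)($\alpha$) to get $\gd$ with $\nor^1_i(\gd)\ge\nor^1_i(\gc^p_i)/2$, then strengthen $(q^{\rm pre})^{[\bar{x}_*]}$ (with $\gd$ in slot $i$) by density to a condition $r_{\bar{x}_*}$ deciding $\name{h}(n_*)$ at some level $j_{\bar{x}_*}>i$; the $i$-th creature of $r_{\bar{x}_*}$ lies in $\underline{\Sigma}(\gd)$ with $\nor^1_i\ge 1$, so unhalving (\ref{2q.23}(e)($\beta$)) yields $\gc^q_i\in\Sigma(\gc^p_i)$ with the same $\cF$-data (hence the same forced decision) and $\nor^1_i(\gc^q_i)\ge\nor^1_i(\gc^p_i)/2$. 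Other $\bar{x}\in S_i\setminus\{\bar{x}_*\}$ are not touched at this stage; the bookkeeping will reach them, as well as their descendants in $\bS_{u,j}$ for $j>i$, at later stages, so ultimately every length-$j$ tuple $\bar{x}\in\pos(q)\cap\bS_{u,j}$, for a suitable $j\ge i$, will have $q^{[\bar{x}]}$ deciding $\name{h}(n_*)$. This delivers clause (b).

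For clause (c), assume additionally $p\Vdash(\forall i)(\name{h}(i)<k^*_i)$. At the stage $i$ that processes the pair $(\bar{x}_*,i)$ with $\bar{x}_*\in S_i$, I insert one extra step: partition $\set(\cF_{\gc^q_i})$ according to the value of $\name{h}(i)$ forced by $(q^{\rm pre})^{[\bar{x}_*\conc\suc_{\bar{x}_*}(h)]}$ for $h\in\pos^u_i$. Because $\name{h}(i)$ takes at most $k^*_i$ values, this is at most a $k^*_i$-way partition, and the ``moreover'' clause of \ref{2q.23}(d) supplies one class $\cY$ with $\nor^0_i(\cY)\ge\nor^0_i(\cF_{\gc^q_i})-1$ on which the forced value is constant. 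Restricting $\cF_{\gc^q_i}$ to $\cY$ makes every length-$(i+1)$ extension of $\bar{x}_*$ force $\name{h}(i)$ to this constant value, so $(q^{\rm pre})^{[\bar{x}_*]}$ already forces it, delivering ($\circledast$) at $\bar{x}_*$; the other $\bar{x}\in S_i$ are handled at later stages via the bookkeeping.

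Properness will follow from the continuous reading by the standard argument: for a countable $N\prec(\cH(\chi),\in)$ with $p,\bbQ_u\in N$, run the above fusion inside $N$ against an enumeration of the dense open sets of $\bbQ_u$ lying in $N$, producing an $(N,\bbQ_u)$-generic master condition. The chief obstacle throughout is the norm bookkeeping — halving, unhalving, and bigness at each stage incur norm losses that must not accumulate faster than $\nor_i(\gc^p_i)\to\infty$ can absorb — but this is precisely what the exponential separations $n^*_i\gg k^*_i\gg\ell^*_i$ built into \ref{2q.1}, combined with the smoothing $\nor^2_i=\log_{\ell^*_i}\nor^1_i$ in the definition of $\nor_i$, are designed to handle: performing only $O(1)$ halving/bigness operations per stage ensures $\nor_i(\gc^q_i)\to\infty$ and hence $q\in\bbQ_u$.
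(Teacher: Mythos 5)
Your plan has the right ingredients --- halving, bigness, bookkeeping --- and it is legitimate to try to unwind the argument rather than cite \cite{RoSh:470} as the paper does, but the central step is elided and, as written, cannot be carried out. You write that you ``strengthen $(q^{\rm pre})^{[\bar{x}_*]}$ (with $\gd$ in slot $i$) by density to a condition $r_{\bar{x}_*}$ deciding $\name{h}(n_*)$ at some level $j_{\bar{x}_*}>i$; the $i$-th creature of $r_{\bar{x}_*}$ lies in $\underline{\Sigma}(\gd)$.'' No density argument produces such an $r_{\bar{x}_*}$: the set of conditions \emph{deciding} $\name{h}(n_*)$ is dense, but strengthening $(q^{\rm pre})^{[\bar{x}_*]}$ to one of them extends the stem past level $i$, so there is no ``$i$-th creature'' left to unhalve. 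If instead you intend $r_{\bar{x}_*}$ to keep the stem $\bar{x}_*$ while continuously reading $\name{h}(n_*)$ at level $j$, that is exactly the continuous-reading statement you are trying to prove, so the step is circular. In general no modification of $\gc^p_i$ alone can make $(q^{\rm pre})^{[\bar{x}_*]}$ decide $\name{h}(n_*)$: the decision has to be propagated down from higher levels, and that propagation --- amalgamating over all $\bar{y}\in\pos\cap\bS_{u,j}$ at each intermediate $j$ and paying for it by halving and bigness --- is the whole content of the argument. Relatedly, your scheme fixes only $\gc^q_i$ at stage $i$ and leaves $\gc^p_j$ for $j>i$ untouched, so even if $r_{\bar{x}_*}$ existed, the decision it witnesses depends on its slot-$(>i)$ creatures, which the final $q$ does not contain.

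The paper handles all of this by a reduction: it shows that $\bbQ_u$ above $p$ is (essentially) $\bbQ^*_f(K^*,\Sigma^*)$ for a local finitary big creating pair $(K^*,\Sigma^*)$ with the Halving Property, checks that $f(j,i)=(\ell^*_i)^{j+1}$ is $\bH$-fast (the inequality $2^{\varphi_\bH(i)}\big(f(j,i)+\varphi_\bH(i)+2\big)\le f(j+1,i)$, verified from the $\beth$-tower estimates of Proposition \ref{2q.23}(a)), and then invokes \cite[Theorem 2.2.11]{RoSh:470} for clauses (a)+(b) and \cite[Theorem 5.1.12]{RoSh:470} together with the bigness \ref{2q.23}(d) for clause (c). The $\bH$-fastness check is the quantitative heart of the matter, and your last paragraph underestimates it: at level $i$ there can be up to $\ell^*_i$ many $\bar{x}\in S_i$ to service, and each ``decide'' subroutine, unwound, amalgamates over $|\pos^u_j|$-many successors at each level it touches, which is far from $O(1)$ per stage. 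If you want a self-contained proof, the right intermediate target is a decide-or-halve dichotomy at a single slot (either decide $\name{h}(n)$ continuously above a fixed stem, or halve the creature at the current slot), followed by a fusion over slots and bookkeeping indices with the $\bH$-fast $f$ controlling the norm loss; that is precisely what \cite[Theorem 2.2.11]{RoSh:470} packages.
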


\begin{proof}
It is a consequence of \cite{RoSh:470}, so in the proof below we
will follow definitions and notation as there. First note that we may
assume $|u|<\bi(p)$ (as otherwise we fix $i>|u|$ and we carry out the
construction successively for all $\bar{x}\in\pos(p)$ of length $i$). 

For $i<\bi(p)$ let $\bH(i)=\{x_{p,i}\}$ and for $i\geq \bi(p)$ let
$\bH(i)=\pos^u_i$. Let $K^*$ consists of all creatures
$t=(\nor[t],\val[t],\dis[t])$ such that
\begin{itemize}
\item for some $i\geq \bi(p)$ and $\gc\in\CR^u_i$ we have $\dis[t]=(\gc,i)$
  and $\nor[t]=\nor^1_i(\gc)$, and
\item $\val[t]=\{(\bar{w},\bar{w}\conc\langle h\rangle):\bar{w}\in
  \prod\limits_{j<i} \bH(j)\ \&\ h\in \set(\cF_{\gc})\}$.
\end{itemize} 
(Note the use of $\nor^1_i$ and not $\nor^2_i$ above.) For $t\in K^*$ with 
$\dis[t]=(\gc,i)$ we let 
\[\Sigma^*(t)=\{s\in K: \dis[s]=(\gd,i)\ \&\ \gd\in\Sigma(\gc)\}.\]
Then $(K^*,\Sigma^*)$ is a local finitary big creating pair (for $\bH$) with
the Halving Property (remember \ref{2q.23}(d,e)). Now define
$f:\bbN\times\bbN\longrightarrow \bbN$ by $f(j,i)=(\ell^*_i)^{j+1}$. Let
$p^*\in\bbQ^*_f(K^*,\Sigma^*)$ be a condition such that $w^{p^*}=\bar{x}_p$
and $\dis[t^{p^*}_i]=(\gc^p_{i+\bi(p)},i+\bi(p))$ for $i<\omega$. Note that
$\bbQ_u$ above $p$ is essentially the same as $\bbQ^*_f(K^*,\Sigma^*)$ above
$p^*$ (compare \ref{2q.37}(2)). It should be clear that it is enough to find
a condition $q^*\geq p^*$ with the properties (a)--(c) restated for
$\bbQ^*_f(K,\Sigma)$. 

Let  $\varphi_\bH(i)=|\prod\limits_{j<i}\bH(j)|$. It follows from
\ref{2q.23}(a) that $\varphi_\bH(i)\leq |\pos^u_{i-1}|^i<(\beth(30i+3))^i
<\beth(30i+4)$ and $2^{\varphi_\bH(i)}<\beth(30i+5)$. Therefore, 
\[\begin{array}{l}
2^{\varphi_\bH(i)}\cdot (f(j,i)+\varphi_\bH(i)+2)\leq
\beth(30i+5)\cdot \big(\big(\beth(30i+10)\big)^{j+1}+\beth(30i+4)+ 2\big)<\\ 
\beth(30i+7)\cdot\big(\beth(30i+10)\big)^{j+1}<\big(\beth(30i+10)
\big)^{j+2} =f(j+1,i). 
\end{array}\]
Since plainly $f(j,i)\leq f(j,i+1)$, we conclude that the function $f$ is
$\bH$--fast. Therefore \cite[Theorem 2.2.11]{RoSh:470} gives us a condition
$q^*$ satisfying (a)+(b) (restated for $\bbQ^*_f(K^*,\Sigma^*)$). Proceeding
as in \cite[Theorem 5.1.12]{RoSh:470} but using the large amount of bigness
here (see \ref{2q.23}(d)) we may find a stronger condition saisfying also
demand (c).  

Note that to claim just properness of $\bbQ_u$ one could use the quite
strong halving of $\nor_i$ and \cite{RShS:941}.
\end{proof}

\begin{observation}
\label{2q.16}
\begin{enumerate}
\item $D^{u_1 \cup u_2}_i=D^{u_1}_i\cup D^{u_2}_i$.
\item $h\in\pos^{u_1 \cup u_2}_i$\quad if and only if\quad $h$ is a function
  with domain $D^{u_1 \cup u_2}_i$ and $h\rest D^{u_\ell}_i\in
  \pos^{u_\ell}_i$ for $\ell=1,2$.   
\end{enumerate}
\end{observation}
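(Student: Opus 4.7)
The plan is to unwind the definitions from \ref{2q.8}(I)$(\alpha)$--$(\beta)$ and observe that both clauses reduce to a set-theoretic identity about unions of indexing sets.

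For part (1), recall that $D^u_i$ is defined as $\{(\alpha,g):\alpha\in u\text{ and }g\in\Per({}^{n_*(i-1)}2)\}$ (with the convention at $i=0$). Since the second coordinate is quantified independently of $u$, the operation $u\mapsto D^u_i$ is simply a Cartesian product with a fixed set $\Per({}^{n_*(i-1)}2)$, and any such operation commutes with unions of the first factor. Thus $D^{u_1\cup u_2}_i=(u_1\cup u_2)\times\Per({}^{n_*(i-1)}2)=D^{u_1}_i\cup D^{u_2}_i$.

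For part (2), observe from \ref{2q.8}(I)$(\beta)$ that the conditions defining membership in $\pos^u_i$ are purely local: $h$ is required to be a function on $D^u_i$, and for each pair $(\alpha,g)\in D^u_i$ the value $h(\alpha,g)$ must be a pair of permutations of ${}^{n_*(i)}2$ whose projections both extend $g$ in the prescribed sense. No requirement couples values at distinct elements of $D^u_i$. Using part (1), if $h\in\pos^{u_1\cup u_2}_i$ then for each $\ell\in\{1,2\}$ the restriction $h\rest D^{u_\ell}_i$ has the correct domain and satisfies the pointwise requirements (inherited from those verified on the larger domain), so it belongs to $\pos^{u_\ell}_i$. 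Conversely, given $h$ with domain $D^{u_1\cup u_2}_i$ such that $h\rest D^{u_\ell}_i\in\pos^{u_\ell}_i$ for $\ell=1,2$, every $(\alpha,g)\in D^{u_1\cup u_2}_i=D^{u_1}_i\cup D^{u_2}_i$ lies in at least one $D^{u_\ell}_i$, hence the pointwise condition at $(\alpha,g)$ is satisfied, and therefore $h\in\pos^{u_1\cup u_2}_i$.

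There is no genuine obstacle here; the only point to check is that the permutation constraint in the definition of $\pos^u_i$ depends on $(\alpha,g)$ alone (and on $i$), not on $u$. This is immediate from the formula $(h_\ell(\alpha,g)(\rho))\rest n_*(i-1)=g(\rho\rest n_*(i-1))$, which involves only the single pair $(\alpha,g)$. Thus the observation is a direct consequence of the pointwise character of the definition together with \ref{2q.8}(I)$(\alpha)$.
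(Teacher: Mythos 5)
Your proof is correct and is exactly the intended argument: the paper states this as an Observation with no proof precisely because it reduces to unwinding \ref{2q.8}(I)$(\alpha)$--$(\beta)$ and noting that the defining conditions on $\pos^u_i$ are pointwise in $(\alpha,g)$, which is what you verify.
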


\begin{definition}
\label{restrdef}
Assume that $\emptyset\neq w\subseteq u\subseteq {\rm Ord}$ are finite,
$v=u\setminus w\neq \emptyset$. Let $\cF\in\wpos^u_i$. We define
$\cF\upharpoonleft w:\pos^w_i\longrightarrow [0,1]$ by
\[(\cF\upharpoonleft w)(h)=\frac{\sum\{\cF(e):h\subseteq
  e\in\pos^u_i\}}{|\pos^v_i|}\qquad \mbox{ for }h\in \pos^w_i.\]
We will also keep the convention that if $u\subseteq {\rm Ord}$ and $\cF\in
\pos^u_i$, then $\cF\upharpoonleft u=\cF$. 
\end{definition}

\begin{proposition}
\label{3.10A}
Assume that $\emptyset\neq u_0\subseteq u_1\subseteq {\rm Ord}$ are finite,
$u_0\neq u_1$ and $\cF_1\in\wpos^{u_1}_i$. Let $\cF_0:=\cF_1\upharpoonleft
u_0$. Then
\begin{enumerate}
\item $\cF_0\in\wpos^{u_0}_i$ and $\frac{\|\cF_0\|}{|\pos^{u_0}_i|}=
  \frac{\|\cF_1\|}{|\pos^{u_1}_i|}$. 
\item If $\cF_2\in\wpos^{u_0}_i$, $\cF_2\leq \cF_0$, then there is
  $\cF_3\in\wpos^{u_1}_i$ such that $\cF_3\leq\cF_1$ and
  $\cF_3\upharpoonleft u_0=\cF_2$.
\end{enumerate}
\end{proposition}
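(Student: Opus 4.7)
The plan is to exploit the natural product decomposition of $\pos^{u_1}_i$ inherited from Observation~\ref{2q.16}(2). Writing $v=u_1\setminus u_0\neq\emptyset$, the map $e\mapsto(e\rest D^{u_0}_i,\,e\rest D^v_i)$ is a bijection of $\pos^{u_1}_i$ onto $\pos^{u_0}_i\times\pos^v_i$; in particular $|\pos^{u_1}_i|=|\pos^{u_0}_i|\cdot|\pos^v_i|$, and each $h\in\pos^{u_0}_i$ has exactly $|\pos^v_i|$ extensions in $\pos^{u_1}_i$.

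For part (1) I would interchange the order of summation using the bijection above:
\[
\|\cF_0\|=\sum_{h\in\pos^{u_0}_i}\frac{\sum\{\cF_1(e):h\subseteq e\in\pos^{u_1}_i\}}{|\pos^v_i|}=\frac{\|\cF_1\|}{|\pos^v_i|}.
\]
Non-vanishing of $\cF_0$ follows because at least one term $\cF_1(e)$ is positive, and dividing both sides by $|\pos^{u_0}_i|$ and using the cardinality identity yields the claimed ratio equality.

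For part (2) I would define, for $e\in\pos^{u_1}_i$,
\[
\cF_3(e)=\begin{cases}\cF_1(e)\cdot\dfrac{\cF_2(e\rest D^{u_0}_i)}{\cF_0(e\rest D^{u_0}_i)}&\text{if }\cF_0(e\rest D^{u_0}_i)>0,\\[1mm] 0&\text{otherwise.}\end{cases}
\]
Since $\cF_2\le\cF_0$ the ratio lies in $[0,1]$, so $\cF_3\le\cF_1$ pointwise. To check the restriction equality, fix $h\in\pos^{u_0}_i$. If $\cF_0(h)>0$, then the constant $\cF_2(h)/\cF_0(h)$ factors out of the inner sum and
\[
(\cF_3\upharpoonleft u_0)(h)=\frac{\cF_2(h)}{\cF_0(h)}\cdot\frac{\sum\{\cF_1(e):h\subseteq e\}}{|\pos^v_i|}=\frac{\cF_2(h)}{\cF_0(h)}\cdot\cF_0(h)=\cF_2(h).
\]
If $\cF_0(h)=0$, then $\cF_1(e)=0$ for every $e\supseteq h$ and $\cF_2(h)=0$ as well, so both sides are zero. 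Finally, $\cF_3\not\equiv 0$: any $h$ with $\cF_2(h)>0$ has $\cF_0(h)>0$, hence some $e\supseteq h$ satisfies $\cF_1(e)>0$, whence $\cF_3(e)>0$.

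The only subtlety is the handling of points where $\cF_0$ vanishes, where the defining ratio is formally undefined; but the inequality $\cF_2\le\cF_0$ forces $\cF_2$ to vanish on the same set, so setting $\cF_3=0$ on all the corresponding extensions is consistent with both $\cF_3\le\cF_1$ and $\cF_3\upharpoonleft u_0=\cF_2$. No deeper obstacle is expected; everything else is bookkeeping with the product decomposition.
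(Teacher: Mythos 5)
Your proof is correct and follows essentially the same route as the paper: part~(1) by Fubini-type reordering of the double sum via the product decomposition $\pos^{u_1}_i\cong\pos^{u_0}_i\times\pos^{v}_i$, and part~(2) by defining $\cF_3$ as $\cF_1$ rescaled by the ratio $\cF_2/\cF_0$ on each fibre. Your explicit treatment of the degenerate case $\cF_0(h)=0$ and the verification that $\cF_3$ is not identically zero make the argument slightly more careful than the paper's ``clearly,'' but the underlying idea is identical.
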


\begin{proof}
Let $v=u_1\setminus u_0$.

\noindent (1)\quad Plainly, $\cF_0\in\wpos^{u_0}_i$. Also
\[\|\cF_0\|=\frac{1}{|\pos^v_i|}\sum\big\{\sum\{\cF_1(e):h\subseteq e \in
\pos^{u_1}_i\}:h\in\pos^{u_0}_i\big\}= \frac{\|\cF_1\|}{|\pos^v_i|}=
\frac{|\pos^{u_0}_i|}{|\pos^{u_1}_i|}\cdot\|\cF_1\|.\] 
\medskip

\noindent (2)\quad Suppose $\cF_2\in\wpos^{u_0}_i$, $\cF_2\leq \cF_0$. For
$e\in \pos^{u_1}_i$ such that $\cF_0(e\upharpoonleft u_0)>0$ we put
\[\cF_3(e)= \cF_1(e)\cdot\frac{\cF_2(e\upharpoonleft u_0)}{\cF_0(e
  \upharpoonleft u_0)},\]
and for $e\in\pos^{u_1}_i$ such that $\cF_0(e\upharpoonleft u_0)=0$ we let
$\cF_3(e)=0$. Then clearly $\cF_3\in\wpos^{u_1}_i$, $\cF_3\leq \cF_1$ and
for $h\in \pos^{u_0}_i$ we have:
\[(\cF_3\upharpoonleft u_0)(h)=\frac{\sum\{\cF_3(e):h\subseteq
  e\in\pos^{u_1}_i\}}{|\pos^v_i|}=\frac{\cF_2(h)}{\cF_0(h)}\cdot
\frac{\sum\{\cF_1(e): h\subseteq e\in\pos^{u_1}_i\}}{|\pos^v_i|}=\cF_2(h).\] 
\end{proof}

\begin{definition}
\label{2q.11}
\begin{enumerate}
\item We say that a pair $(\cF_1,\cF_2)$ is {\em balanced\/} when for some 
$i<\omega$ and finite non-empty sets $u_1,u_2\subseteq {\rm Ord}$ we have
$\cF_\ell\in\wpos^{u_\ell}_i$ for $\ell=1,2$ and $\|\cF_1\|/|\pos^{u_1}_i|=
\|\cF_2\|/|\pos^{u_2}_i|$ and, moreover, if $u_1\cap u_2\neq \emptyset$ then
also $\cF_1\upharpoonleft (u_1\cap u_2)=\cF_2 \upharpoonleft (u_1\cap
u_2)$. 
\item A pair $(\cF_1,\cF_2)$ is {\em strongly balanced\/} if it is balanced
  and $0\neq |u_1\setminus u_2| = |u_2\setminus u_1|$ (where $\cF_\ell\in
  \wpos^{u_\ell}_i$ for $\ell=1,2$).
\item Assume $\cF_\ell\in \wpos^{u_\ell}_i$ (for $\ell=1,2$). Let $u=u_1\cup
  u_2$. We define $\cF=\cF_1*\cF_2\in\ypos^{u_1\cup u_2}_i$ (see
  \ref{2q.8}(I)($\gamma$)) by putting for $h\in\pos^{u_1\cup u_2}_i$
\[\cF(h)=\cF_1(h\upharpoonleft u_1)\cdot \cF_2(h\upharpoonleft u_2).\]
\end{enumerate}
\end{definition}

\begin{remark}
\label{remarkx}
\begin{enumerate}
\item Note that $\cF_1*\cF_2$ can be constantly zero, so it does not have to
  be a member of $\wpos$. However, below we will apply to it our notation
  and definitions formulated for $\wpos$.
\item If $\cF_\ell\in\wpos^{u_\ell}_i$ ($\ell=1,2$), $u_0= u_1\cap u_2\neq
  \emptyset$, and $\cF_3=\cF_1 *\cF_2$, then $\cF_3\upharpoonleft u_0=
  (\cF_1\upharpoonleft u_0)\cdot (\cF_2\upharpoonleft u_0)$. 
\item If $u_1\cap u_2=\emptyset$, $\cF_\ell\in\wpos^{u_\ell}_i$, then
  $\|\cF_1*\cF_2\| =\|\cF_1\|\cdot \|\cF_2\|$. 
\item Suppose $(\cF_1,\cF_2)$ is balanced, $\cF_\ell\in\wpos^{u_\ell}_i$
  (for $\ell=1,2$). Choose finite $u'_1,u'_2\subseteq {\rm Ord}$ such that
  $u_1\subseteq u'_1$, $u_2\subseteq u'_2$, $u_1\cap u_2=u'_1\cap u'_2$ and
  $|u'_1\setminus u'_2|=|u'_2\setminus u'_1|\neq 0$. For $\ell=1,2$ and
  $h\in\pos^{u'_\ell}_i$ put $\cF'_\ell(h)=\cF_\ell(h\upharpoonleft
  u_\ell)$. Then $(\cF'_1,\cF'_2)$ is strongly balanced and $\cF'_\ell
  \upharpoonleft u_\ell=\cF_\ell$. 
\end{enumerate}
\end{remark}

\begin{proposition}
\label{x38}
\begin{enumerate}
\item If $(u_1,u_2)$ is a $\Delta$--system pair, $u_1\neq
  u_2\neq\emptyset$, $\cF_\ell\in\wpos^{u_\ell}_i$ for $\ell=1,2$, and
  $\cF_2=\OP_{u_2,u_1}(\cF_1)$, then the pair $(\cF_1,\cF_2)$ is strongly
  balanced.  
\item If $\cF_\ell\in\wpos^{u_\ell}_i$ for $\ell=1,2$ and
  $\|\cF_\ell\|/|\pos^{u_\ell}_i| \ge a > 0$, the pair $(\cF_1,\cF_2)$ is
  balanced, $u_3=u_1\cup u_2$ and $\cF=:\cF_1*\cF_2$, then
  $\|\cF\|/|\pos^{u_3}_i| \ge \frac{a^3}{8}$.  
\end{enumerate}
\end{proposition}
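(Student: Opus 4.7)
The plan for part (1) is a direct verification of the three defining clauses of ``strongly balanced'' in Definition~\ref{2q.11}. Since $(u_1,u_2)$ is a $\Delta$-system pair, $\otp(u_1)=\otp(u_2)$ (so in particular $|u_1|=|u_2|$) and $\OP_{u_2,u_1}$ is the identity on $u_1\cap u_2$. Transporting $\cF_1$ along this order-preserving bijection is purely a relabeling of the indexing sets $D^{u_1}_i$ and $\pos^{u_1}_i$, so $|\pos^{u_1}_i|=|\pos^{u_2}_i|$ and $\|\cF_1\|=\|\cF_2\|$, yielding the ratio condition. Because the bijection fixes $u_1\cap u_2$ pointwise, the restrictions $\cF_\ell\upharpoonleft(u_1\cap u_2)$ agree. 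Finally $|u_1\setminus u_2|=|u_2\setminus u_1|=|u_1|-|u_1\cap u_2|\neq 0$ since $u_1\neq u_2$, so the ``strongly'' part holds.

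For part (2), set $u_0=u_1\cap u_2$ and $v_\ell=u_\ell\setminus u_0$. I would first observe that from Definition~\ref{2q.8}(I)$(\alpha)$--$(\beta)$, the set $D^u_i$ is a disjoint union over $\alpha\in u$ and the conditions defining $\pos^u_i$ are coordinate-wise, whence there is a natural bijection $\pos^{u_3}_i\cong\pos^{u_0}_i\times\pos^{v_1}_i\times\pos^{v_2}_i$. Writing each $h^*\in\pos^{u_3}_i$ as a triple $(h,h_1,h_2)$ and separating the sums (using $\cF(h^*)=\cF_1(h\cup h_1)\cdot\cF_2(h\cup h_2)$) gives
\begin{equation*}
\|\cF\|\;=\;\sum_{h,h_1,h_2}\cF_1(h\cup h_1)\cdot\cF_2(h\cup h_2)\;=\;|\pos^{v_1}_i|\cdot|\pos^{v_2}_i|\cdot\sum_{h\in\pos^{u_0}_i}\cF_0(h)^2,
\end{equation*}
where $\cF_0:=\cF_1\upharpoonleft u_0=\cF_2\upharpoonleft u_0$ by the balancedness hypothesis. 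Proposition~\ref{3.10A}(1) gives $\|\cF_0\|/|\pos^{u_0}_i|=\|\cF_1\|/|\pos^{u_1}_i|\geq a$, so $\|\cF_0\|\geq a\cdot|\pos^{u_0}_i|$. The Cauchy--Schwarz inequality then yields
\begin{equation*}
\sum_{h}\cF_0(h)^2\;\geq\;\frac{\|\cF_0\|^2}{|\pos^{u_0}_i|}\;\geq\;a^2\cdot|\pos^{u_0}_i|,
\end{equation*}
and multiplying through gives $\|\cF\|/|\pos^{u_3}_i|\geq a^2$, which (since $a\leq 1$, as $\cF_\ell$ takes values in $[0,1]$) is strictly stronger than the required bound $a^3/8$.

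The edge case $u_0=\emptyset$ is even easier: Remark~\ref{remarkx}(3) gives $\|\cF\|=\|\cF_1\|\cdot\|\cF_2\|$ and disjointness forces $|\pos^{u_3}_i|=|\pos^{u_1}_i|\cdot|\pos^{u_2}_i|$, so $\|\cF\|/|\pos^{u_3}_i|=(\|\cF_1\|/|\pos^{u_1}_i|)(\|\cF_2\|/|\pos^{u_2}_i|)\geq a^2$. I do not anticipate any real obstacle here; the only point worth double-checking is the product decomposition of $\pos^{u_3}_i$, which falls out of a careful unpacking of the definitions. The generous slack between $a^2$ and $a^3/8$ suggests the proposition has been stated loosely to leave room for later reformulations, but the Cauchy--Schwarz estimate delivers the stated bound comfortably.
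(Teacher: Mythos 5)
Your part (1) matches the paper's (which simply says ``Straightforward''). For part (2), your argument is correct and takes a genuinely cleaner route than the paper's. Both proofs begin from the same decomposition: writing $u_3 = u_0 \sqcup v_1 \sqcup v_2$ with $u_0 = u_1 \cap u_2$, one factors $\pos^{u_3}_i \cong \pos^{u_0}_i \times \pos^{v_1}_i \times \pos^{v_2}_i$ via \ref{2q.16} and arrives at $\|\cF\| = |\pos^{v_1}_i|\cdot|\pos^{v_2}_i|\cdot\sum_{h\in\pos^{u_0}_i}\cF_0(h)^2$, where $\cF_0 = \cF_1\upharpoonleft u_0 = \cF_2\upharpoonleft u_0$ (this is exactly the paper's $(*)_1$--$(*)_3$ with $\|\cF^{[h]}_\ell\| = \cF_0(h)\cdot k_\ell$). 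From there the routes diverge: you apply Cauchy--Schwarz to $\sum_h \cF_0(h)^2$ and get $\|\cF\|/|\pos^{u_3}_i|\ge a^2$ directly; the paper instead introduces the threshold set $A_0=\{h:\cF_0(h)\ge a/2\}$, proves a Markov-type density lower bound $|A_0|\ge\frac{a}{2-a}|\pos^{u_0}_i|$ via $(*)_5$, and only sums over $A_0$, obtaining $\frac{a^3}{4(2-a)}\ge\frac{a^3}{8}$. Your Cauchy--Schwarz estimate is both shorter and gives the optimal constant $a^2$ (sharp when $\cF_0$ is constant), whereas the paper's thresholding discards information and lands on the weaker stated bound. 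The extra slack does not matter for any of the applications in \S2--\S3, which only need a fixed polynomial lower bound in $a$, so the paper's looser claim was presumably chosen because the Markov argument is more in keeping with the norm-calculus style used elsewhere in \ref{2q.23}; but your version would be a strictly better replacement.
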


\begin{proof}
(1)\quad  Straightforward.

\noindent (2)\quad Let $u_0=u_1\cap u_2$. We may assume $u_0\neq \emptyset$
(see \ref{remarkx}(3)). Let $\cF_3:=\cF$ and $\cF_0=\cF_1\upharpoonleft u_0
=\cF_2 \upharpoonleft u_0$. For $h\in\pos^{u_0}_i$ and $\ell \le 3$ let
$\cF^{[h]}_\ell:\pos^{u_\ell}_i\longrightarrow [0,1]$ be defined by 
\[\cF^{[h]}_\ell(e)=\left\{\begin{array}{ll}
\cF_\ell(e)&\mbox{ if }h\subseteq e,\\
0 &\mbox{ otherwise.}
\end{array}\right.\]
Note that
\begin{enumerate}
\item[$(*)_0$]  $k_\ell = |\{e\in\pos^{u_\ell}_i: h\subseteq e\}|$ for
  $h\in\pos^{u_0}_i$, $\ell=1,2$, i.e., this number does not depend on $h$.  
\end{enumerate}
[Why? By the definition of $\pos^{u_\ell}_i$ and \ref{2q.16}.]  
\begin{enumerate}
\item[$(*)_1$]  $\cF_\ell$ is the disjoint sum of $\langle \cF^{[h]}_\ell:h
  \in\pos^{u_0}_i \rangle$ for $\ell=1,2,3$; the ``disjoint'' means that
  $\langle \set(\cF^{[h]}_\ell):h\in\pos^{u_0}_i\rangle$ are pairwise
  disjoint. Hence $\|\cF_\ell\|=\sum\{\|\cF^{[h]}_\ell\|:
  h\in\pos^{u_0}_i\}$. 
\end{enumerate}
[Why?  By the definition of $\pos^{u_\ell}_i$ and $\cF^{[h]}_\ell$.] 
\begin{enumerate}
\item[$(*)_2$]  $k_\ell\geq \|\cF^{[h]}_\ell\|=\cF_0(h)\cdot k_\ell$ for
  $\ell=1,2$.   
\end{enumerate}
[Why? By Defintion \ref{restrdef}.] 
\begin{enumerate}
\item[$(*)_3$]  $\|\cF^{[h]}_3\|=\|\cF^{[h]}_2\|\times\|\cF^{[h]}_1\|$. 
\end{enumerate}
[Why?  By the choice of $\cF^{[h]}_3$.]  

Let (noting that $0 < a \le 1$)
\begin{enumerate}
\item[$(*)_4$]  $A_0=\{h\in\pos^{u_0}_i:\cF_0(h)\geq\frac{a}{2}\}$. 
\end{enumerate}
Now
\begin{enumerate}
\item[$(*)_5$] $|A_0|\geq \frac{a}{2-a}\times |\pos^{u_0}_i|$.
\end{enumerate}
[Why?  Letting $d =|A_0|/|\pos^{u_0}_i|$ and $b =\frac{a}{2}$ (so $0<b\leq
\frac{1}{2}$) we have 
\[h\in \pos^{u_0}_i \setminus A_0\quad \Rightarrow\quad
\|\cF^{[h]}_1\| \le \frac{a}{2} k_1 =bk_1\] 
(remember $(*)_2$). Also $\|\cF^{[h]}_1\|\leq k_1$ for all
$h\in\pos^{u_0}_i$ and $k_1\cdot |\pos^{u_0}_i|=|\pos^{u_1}_i|$. Hence 
\[\begin{array}{l}
a\times |\pos^{u_1}_i|\leq\|\cF_1\|=\sum\{\|\cF^{[h]}_1\|:h\in\pos^{u_0}_i
\}= \\  
\sum\{\|\cF^{[h]}_1\|:h\in\pos^{u_0}_i\setminus A_0\}+\sum\{\|
\cF^{[h]}_1\|:h \in A_0\}\leq bk_1\cdot (|\pos^{u_0}_i|-|A_0|)+
k_1|A_0|= \\ 
bk_1(1-d)|\pos^{u_0}_i| + k_1 d|\pos^{u_0}_i|= k_1\cdot |\pos^{u_0}_i|\cdot
(b(1-d) + d) = |\pos^{u_1}_i|(b + (1-b)d). 
\end{array}\]
Hence $a \le b + (1-b)d$ and $\frac{a-b}{1-b} \le d$. So, as $b = a/2$, we
have $d \ge \frac{a/2}{1-a/2} = \frac{a}{2-a}$. By the choice of $d$ we
conclude $|A_0| = d \times |\pos^{u_0}_i|\geq \frac{a}{2-a}\times
|\pos^{u_0}_i|$, i.e., $(*)_5$ holds.]

Now
\begin{enumerate}
\item[$(*)_6$] $\|\cF_3\|\ge\frac{a^2}{4}\times k_1 \times k_2\times |A_0|$.   
\end{enumerate}
[Why? By $(*)_3$, $\|\cF^{[h]}_3\|=\|\cF^{[h]}_1\|\times \|\cF_2^{[h]}\|$
for all $h\in \pos^{u_0}_i$ and hence  
\[\begin{array}{l}
\|\cF_3\|=\sum\{\|\cF^{[h]}_3\|:h\in\pos^{u_0}_i\}=
\sum\{\|\cF^{[h]}_1\|\times \|\cF_2^{[h]}\|:h\in \pos^{u_0}_i\}\geq\\
\sum\{\|\cF^{[h]}_1\|\times \|\cF_2^{[h]}\|:h\in A_0\} \ge 
\sum\{\frac{a^2}{4}\cdot k_1\cdot k_2:h\in A_0\} =\frac{a^2}{4}\cdot
k_1\cdot k_2 \cdot |A_0|.  
\end{array}\]
So $(*)_6$ holds.]

Lastly,
\begin{enumerate}
\item[$(*)_7$] $\|\cF_3\|\ge\frac{a^3}{8}|\pos^{u_3}_i|$.
\end{enumerate}
Why? Note that $k_1\cdot k_2\cdot |\pos^{u_0}_i|=|\pos^{u_3}_i|$ and hence 
\[\begin{array}{l}
\|\cF_3\|\ge\frac{a^2}{4}\times k_1\times k_2 \times |A_0| =
\frac{a^2}{4}(|A_0|/|\pos^{u_0}_i|)(k_1\times k_2\times |\pos^{u_0}_i|)=\\  
\frac{a^2}{4}\times(|A_0|/|\pos^{u_0}_i|)\times |\pos^{u_3}_i|\geq 
\frac{a^2}{4}\times\frac{a}{2-a}\times |\pos^{u_3}_i|\geq \frac{a^3}{8}
|\pos^{u_3}_i|.
\end{array}\]
So $(*)_7$ holds and we are done. 
\end{proof}

\begin{remark} 
  In \ref{x38}(2) we can get a better bound, the proof gives
  $\frac{a^4}{4(2-a)^2}$ and we can point out the minimal value, gotten when
  all are equal.
\end{remark}

\begin{definition}
\label{projdef}
Let $\bbP,\bbQ$ be forcing notions. 
\begin{enumerate}
\item A mapping $\bj:\bbP\longrightarrow \bbQ$ is called {\em a projection
    of $\bbP$ onto $\bbQ$\/} when: 
\begin{enumerate}
\item[(a)] $\bj$ is ``onto'' $\bbQ$ and 
\item[(b)] $p_1 \le_{\bbP} p_2\quad \Rightarrow\quad \bj(p_1)\le_{\bbQ}
  \bj(p_2)$. 
\end{enumerate}
\item A projection $\bj:\bbP\longrightarrow\bbQ$ is {\em
    $\lessdot$--complete\/} if (in addition to (a), (b) above):
\begin{enumerate}
\item[(c)]  if $\bbQ\models$`` $\bj(p)\le q$ '', then some $p_1$ satisfies
  $p\le_{\bbP} p_1$ and $q \le_{\bbQ}\bj(p_1)$. 
\end{enumerate}
\end{enumerate}
\end{definition}

\begin{definition}
\label{2q.48}
If $\emptyset\neq u\subseteq v\subset\text{Ord}$ are finite, then
$\bj_{u,v}$ is a function from $\underline{\bbQ}_v$ onto $\underline{\bbQ}_u$
defined by:  

for $q\in\underline{\bbQ}_v$ we have $\bj_{u,v}(q)=p\in\underline{\bbQ}_u$ if
and only if 
\begin{enumerate}
\item[$(\alpha)$]  $\bi(p)=\bi(q)$ and $\bar{x}_p=\bar{x}_q \upharpoonleft 
  u$, and 
\item[$(\beta)$] for $i\in [\bi(p),\omega)$ we have $\gc^p_i
  :=\proj_u(\gc^q_i)$ which means $\gc^p_i =(\cF_{\gc^q_i}\upharpoonleft u,
  m_{\gc^p_i})$.   
\end{enumerate}
\end{definition}

\begin{proposition}
\label{2q.45}
If $u\subseteq v\in\mbox{\rm Ord}^{<\aleph_0}$, then $\bj_{u,v}$ is a (well
defined) $\lessdot$--complete projection from $\underline{\bbQ}_v$ onto
$\underline{\bbQ}_u$. 
\end{proposition}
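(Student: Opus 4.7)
The plan is to verify four assertions in turn: (i) for each $q\in\underline{\bbQ}_v$, the object $\bj_{u,v}(q)$ really is a condition in $\underline{\bbQ}_u$; (ii) $\bj_{u,v}$ is order-preserving; (iii) $\bj_{u,v}$ is onto; and (iv) the lifting property (c) of Definition \ref{projdef} holds. The main tools are Proposition \ref{3.10A} --- which guarantees that restriction preserves the ratio $\|\cF\|/|\pos^u_i|$ and that any $\cF_2\leq \cF_1\upharpoonleft u$ lifts to some $\cF_3\leq \cF_1$ with $\cF_3\upharpoonleft u=\cF_2$ --- together with the observation that the operation $\suc$ of Definitions \ref{2q.1}(C) and \ref{2q.8}(I)($\zeta$) decouples over the coordinates $\alpha\in u$.

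For well-definedness (i), the restriction $\bar{x}_q\upharpoonleft u$ lies in $\bS_{u,\bi(q)}$ because each defining clause of $\OB^u_i$ and of $\suc$ is stated coordinatewise over $\alpha\in u$ (Definition \ref{2q.1}(A, C, F, G)). The key point is that $\nor^0_i(\cF)$ depends on $\cF$ only via the ratio $\|\cF\|/|\pos^u_i|$, and by Proposition \ref{3.10A}(1) this ratio is preserved under $\upharpoonleft u$. Hence $\nor^0_i(\cF_{\gc^q_i}\upharpoonleft u)=\nor^0_i(\cF_{\gc^q_i})$, and consequently $\nor^1_i$ and $\nor_i$ are also preserved, so $\proj_u(\gc^q_i)\in\underline{\CR}^u_i$ and the divergence of the norms is inherited from $q$. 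Order-preservation (ii) follows clause-by-clause of Definition \ref{2q.8}(K)($\beta$): $\trianglelefteq$ restricts coordinatewise; if $h\in\set(\cF_{\gc^{q_1}_j})$ witnesses $\bar{x}_{q_2}\rest(j+1)=\suc_{\bar{x}_{q_2}\rest j}(h)$, then $h\upharpoonleft u\in\set(\cF_{\gc^{q_1}_j}\upharpoonleft u)$ by the definition of the restricted function, and $\suc$ commutes with $\upharpoonleft u$; finally $\cF\leq\cF'\Rightarrow\cF\upharpoonleft u\leq \cF'\upharpoonleft u$, while $m_\gd\geq m_\gc$ is unaffected by restriction.

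Surjectivity (iii) and the lifting property (iv) are handled by a single construction. Given $q\in\underline{\bbQ}_v$ and $p'\geq\bj_{u,v}(q)=:p$ in $\underline{\bbQ}_u$, I build $q'\geq q$ with $\bj_{u,v}(q')=p'$; surjectivity is the special case where one starts from a trivial $q$ obtained by extending $\bar{x}_p$ arbitrarily to some $\bar{x}_q\in\bS_{v,\bi(p)}$ (possible coordinatewise on $v\setminus u$) and taking $\cF$-coordinates equal to the constant function $1$ on $\pos^v_i$. For each $j\in[\bi(p),\bi(p'))$, the step $\bar{x}_{p'}\rest(j+1)=\suc_{\bar{x}_{p'}\rest j}(h_j)$ is witnessed by some $h_j\in\set(\cF_{\gc^p_j})=\set(\cF_{\gc^q_j}\upharpoonleft u)$; by the very definition of $\upharpoonleft u$ the inequality $(\cF_{\gc^q_j}\upharpoonleft u)(h_j)>0$ yields some $e_j\in\pos^v_j$ with $e_j\upharpoonleft u=h_j$ and $\cF_{\gc^q_j}(e_j)>0$, so I set $\bar{x}_{q'}\rest(j+1):=\suc_{\bar{x}_{q'}\rest j}(e_j)$ inductively. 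For $i\geq\bi(p')$, Proposition \ref{3.10A}(2) supplies $\cF''_i\in\wpos^v_i$ with $\cF''_i\leq\cF_{\gc^q_i}$ and $\cF''_i\upharpoonleft u=\cF_{\gc^{p'}_i}$; set $\gc^{q'}_i:=(\cF''_i,m_{\gc^{p'}_i})$.

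The main obstacle is to check that the resulting $q'$ is actually a condition in $\underline{\bbQ}_v$: we need $\nor^0_i(\cF''_i)\geq m_{\gc^{p'}_i}$ and $\nor_i(\gc^{q'}_i)\to\infty$. Both are obtained once more from the norm-preservation identity $\nor^0_i(\cF''_i)=\nor^0_i(\cF''_i\upharpoonleft u)=\nor^0_i(\cF_{\gc^{p'}_i})$ combined with the fact that $p'\in\underline{\bbQ}_u$. The relations $q\leq_{\underline{\bbQ}_v}q'$ and $\bj_{u,v}(q')=p'$ are then immediate from the construction, completing the proof.
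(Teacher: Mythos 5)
Your proof is correct and follows essentially the same route as the paper's: the four facts $(*)_1$--$(*)_4$ that the paper extracts from Proposition \ref{3.10A} (norm preservation under $\proj_u$, compatibility with $\underline{\Sigma}$, constant-extension lifting of a single creature, and the completeness step for creatures) are exactly what you use, with the minor organizational difference that you fold surjectivity into the lifting argument via a maximal condition $q$ with $\cF$-coordinates $\equiv 1$ rather than invoking the constant-extension construction $\cF(h)=\cF_\gd(h\upharpoonleft u)$ directly. The paper leaves the passage from these creature-level facts to the condition-level statement as "the rest should be clear," while you spell out the inductive choice of $e_j$ with $e_j\upharpoonleft u=h_j$ and the norm/divergence checks, which is a faithful unwinding of the same argument.
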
 

\begin{proof}
It follows from \ref{3.10A} that
\begin{enumerate}
\item[$(*)_1$] if $\gc\in\underline{\CR}^v_i$, then $\proj_u(\gc)\in
  \underline{\CR}^u_i$ and $\nor_i(\proj_u(\gc))=\nor_i(\gc)$.
\end{enumerate}
Also, by the definition of $\proj_u$ and \ref{restrdef}, easily 
\begin{enumerate}
\item[$(*)_2$] if $\gc\in\underline{\CR}^v_i$,
  $\gd\in\underline{\Sigma}(\gc)$, then $\proj_u(\gd)\in
  \underline{\Sigma}(\proj_u(\gc))$, and 
\item[$(*)_3$] if $\gd\in\underline{\CR}^u_i$, $\cF:\pos^v_i\longrightarrow
  [0,1]$ is defined by $\cF(h)=\cF_\gd(h\upharpoonleft u)$, then
  $(\cF,m_\gd)\in \underline{\CR}^v_i$,
  $\nor_i\big((\cF,m_\gd)\big)=\nor_i(\gd)$ and
  $\proj_u\big((\cF,m_\gd)\big)=\gd.$ 
\end{enumerate}
Therefore $\bj_{u,v}$ is a projection from $\underline{\bbQ}_v$ onto
$\underline{\bbQ}_u$. To show that it is $\lessdot$--complete we note that,
by \ref{3.10A}(2),
\begin{enumerate}
\item[$(*)_4$] if $\gc_1\in\underline{\CR}^v_i$, $\gc_0=\proj_u(\gc_1)$ and
  $\gc_2\in \underline{\Sigma}(\gc_0)$, then some $\gc_3\in
  \underline{\CR}^v_i$ satisfies $\gc_3\in \underline{\Sigma}(\gc_1)$ and
  $\proj_u(\gc_3)=\gc_2$. 
\end{enumerate}
The rest should be clear.
\end{proof}

\begin{proposition}
\label{2q.52}
Assume $(u_1,u_2)$ is a $\Delta$--system pair, i.e.,  $u_1,u_2 \subseteq
\mbox{\rm Ord}$, $|u_1| = |u_2| < \aleph_0$ and so $\OP_{u_2,u_1}$ (the
order isomorphism from $u_1$ onto $u_2$, see \ref{0z.1}(10)) is the identity
on $u_1 \cap u_2$. Let $u= u_1 \cup u_2$. Further assume that $p_\ell \in 
\underline{\bbQ}_{u_\ell}$ for $\ell=1,2$, $\nor^1_i(\gc^{p_\ell}_i) \geq 1$
for all $i\geq \bi(p_\ell)$ and $\OP_{u_1,u_2}$ maps $p_1$ to $p_2$. Then
there is a condition $q\in\underline{\bbQ}_u$ such that:  
\begin{enumerate}
\item[(a)] $\bi(q)=\bi(p_1)$ and $p_\ell\leq_{\underline{\bbQ}_{u_\ell}}
  \bj_{u_\ell,u}(q)$ for $\ell=1,2$, and 
\item[(b)] $\nor^1_i(\gc^q_i)\ge \nor^1_i(\gc^{p_1}_i)-1$ for $i\in
  [\bi(q),\omega)$. 
\end{enumerate}
\end{proposition}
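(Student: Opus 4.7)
The plan is to amalgamate $p_1, p_2$ into a single condition $q\in\underline{\bbQ}_u$ by gluing the trunks and taking the convolution $\cF_{\gc^{p_1}_i} * \cF_{\gc^{p_2}_i}$ (Definition~\ref{2q.11}(3)) at each creature level $i\ge\bi(p_1)$. The $\Delta$-system structure ensures that $p_1$ and $p_2$ agree on $u_0:=u_1\cap u_2$, making the gluing well-defined, and Proposition~\ref{x38} supplies the balance and density bound needed to control norms.

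For the trunk, I define $x_{q,j}\in\OB^u_j$ for each $j<\bi(p_1)$ by requiring $f_{x_{q,j}}\rest u_\ell = f_{x_{p_\ell,j}}$, $g_{x_{q,j}}\rest u_\ell = g_{x_{p_\ell,j}}$ and $e_{x_{q,j}}\rest u_\ell = e_{x_{p_\ell,j}}$ for $\ell=1,2$. Since $\OP_{u_1,u_2}$ maps $p_1$ to $p_2$ and is the identity on $u_0$, the two recipes agree on $u_0$; the clauses of Definition~\ref{2q.1}(A),(C) and the successor condition in (C) are pointwise in $\alpha$, so they transfer from the $p_\ell$'s to $q$. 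For $i\in [\bi(p_1),\omega)$ I set $\gc^q_i := (\cF^q_i,\ m_{\gc^{p_1}_i})$ with $\cF^q_i := \cF_{\gc^{p_1}_i} * \cF_{\gc^{p_2}_i}$.

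The main obstacle is the norm estimate $\nor^0_i(\cF^q_i)\ge\nor^0_i(\cF_{\gc^{p_1}_i})-1$ (the case $u_1=u_2$ is trivial with $q:=p_1=p_2$, so assume $u_1\neq u_2$). By Proposition~\ref{x38}(1) the pair $(\cF_{\gc^{p_1}_i},\cF_{\gc^{p_2}_i})$ is strongly balanced, and writing $a_i := \|\cF_{\gc^{p_1}_i}\|/|\pos^{u_1}_i|$, Proposition~\ref{x38}(2) yields $\|\cF^q_i\|/|\pos^u_i|\ge a_i^3/8$. Setting $x:=\log_{k^*_i}(1/a_i)\ge 0$, the desired drop of at most $1$ in $\nor^0_i$ reduces to the elementary inequality
\[\frac{1+3x+\log_{k^*_i} 8}{1+x}\le 3,\]
which holds because $\log_{k^*_i} 8\le 2$ (recall $k^*_i=\beth(30i+20)$ is enormous). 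Taking $\log_3$ and substituting into the definition of $\nor^0_i$ gives the estimate, whence $\nor^1_i(\gc^q_i)\ge\nor^1_i(\gc^{p_1}_i)-1$, which is clause (b); divergence of $\nor_i(\gc^q_i)=\log_{\ell^*_i}(\nor^1_i(\gc^q_i))$ then follows from that of $\nor_i(\gc^{p_1}_i)$, so $q\in\underline{\bbQ}_u$.

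It remains to check $p_\ell \le_{\underline{\bbQ}_{u_\ell}} \bj_{u_\ell,u}(q)$ for $\ell=1,2$. Trunks match on $u_\ell$ by construction and $\bi$'s agree. For the tails, since $\cF^q_i(e)=\cF_{\gc^{p_1}_i}(e\rest u_1)\cdot\cF_{\gc^{p_2}_i}(e\rest u_2)\le \cF_{\gc^{p_\ell}_i}(e\rest u_\ell)$ pointwise (both factors lie in $[0,1]$), averaging over the $|\pos^{u\setminus u_\ell}_i|$ extensions of each $h\in\pos^{u_\ell}_i$ yields $\cF^q_i\upharpoonleft u_\ell \le \cF_{\gc^{p_\ell}_i}$; together with $m_{\proj_{u_\ell}(\gc^q_i)}=m_{\gc^{p_\ell}_i}$ this gives $\proj_{u_\ell}(\gc^q_i)\in\underline{\Sigma}(\gc^{p_\ell}_i)$, completing clause (a). The only substantive computation is the norm drop; everything else is bookkeeping delivered by the pointwise nature of the structures and by Proposition~\ref{x38}.
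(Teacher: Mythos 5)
Your proof is correct and follows essentially the same route as the paper: glue the trunks coordinatewise (well-defined by the $\Delta$-system hypothesis and $\OP_{u_1,u_2}(p_1)=p_2$), take the convolution $\cF_{\gc^{p_1}_i}*\cF_{\gc^{p_2}_i}$ for the creatures, and use Proposition~\ref{x38}(1)--(2) to pass from strong balance to the density bound $a^3/8$ and thence to the norm drop of at most~$1$. Your change of variable $x=\log_{k^*_i}(1/a_i)$ is just a repackaging of the paper's direct manipulation $\log_{k^*_i}(8k^*_i/a^3)\le 3\log_{k^*_i}(k^*_i/a)$ (both reduce to $k^*_i\ge 3$), and your explicit check that $\cF^q_i\upharpoonleft u_\ell\le\cF_{\gc^{p_\ell}_i}$ (which the paper leaves as ``clearly $q$ is as required'') together with the separate treatment of the degenerate case $u_1=u_2$ are harmless small additions to the same argument.
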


\begin{proof}  
We shall mainly use clause (2) of \ref{x38}.  

First, we set $\bi(q)=\bi(p_1)$ and we let $\bar{x}=\langle
x_i:i<\bi(q)\rangle$, where $x_i = (f_{x_i},g_{x_i},e_{x_i})$ is defined by 
\begin{enumerate}
\item[$(\bullet_1)$] $f_{x_i} = f_{x^{p_1}_i}\cup f_{x^{p_2}_i}$, it is well
  defined function because $f_{x^{p_\ell}_i}\in {}^{u_\ell}(
  \Per({}^{n_*(i)}2))$ for $\ell=1,2$ are well defined functions, with the 
  same restriction to $u_0 = u_1 \cap u_2$; 
\item[$(\bullet_2)$] $g_{x_i}=g_{x^{p_1}_i}\cup g_{x^{p_2}_i}$ (similarly
  well defined);
\item[$(\bullet_3)$] $e_{x_i}=e_{x^{p_1}_i}\cup e_{x^{p_2}_i}$ (again, it is
  well defined). 
\end{enumerate}
Easily,
\begin{enumerate}
\item[$(\bullet_4)$] $\bar{x}\in\bS_{u,\bi(q)}$.
\end{enumerate}
Second, we let $\bar{\gc}=\langle \gc_i:i\in [\bi(q),\omega)\rangle$ where
for $i\in [\bi(q),\omega)$ we let $\gc_i= (\cF_i,m_i)$, where
\begin{enumerate}
\item[$(\bullet_5)$] $\cF_i=\cF_{\gc^{p_1}_i}*\cF_{\gc^{p_2}_i}$,
\item[$(\bullet_6)$] $m_i=m_{\gc^{p_\ell}_i}$ for $\ell=1,2$.
\end{enumerate}
Let $i\in [\bi(q),\omega)$. By Proposition \ref{x38}(1) we know that the
pair $(\cF_{\gc^{p_1}_i},\cF_{\gc^{p_2}_i})$ is (strongly) balanced. Let
$a=\frac{\|\cF_{\gc^{p_1}_i}\|}{|\pos^{u_1}_i|}=\frac{\|\cF_{\gc^{p_2}_i}\|}{|
  \pos^{u_2}_i|}$. Then, by \ref{x38}(2) we have $\|\cF_i\|\geq
\frac{a^3}{8}\times |\pos^u_i|$. Hence, recalling $k^*_i\geq 3$,
\[\hspace{-5pt}\begin{array}{l}
\nor^0_i(\cF_i)=k^*_i-\log_3\big(\log_{k^*_i}\big(\frac{k^*_i\cdot
  |\pos^u_i|}{\|\cF_i\|}\big)\big)\geq k^*_i-
\log_3\big(\log_{k^*_i}\big(\frac{8k^*_i}{a^3}\big)\big)\geq
k^*_i-\log_3\big(3\log_{k^*_i}\big(\frac{k^*_i}{a}\big)\big) =\\
k^*_i-\log_3\big(\log_{k^*_i}\big(\frac{k^*_i\cdot
  |\pos^{u_1}_i|}{\|\cF_{\gc^{p_1}_i}\|}\big)\big)-1=\nor_i^0(\cF_{\gc^{p_1}_i})-1
  = \nor_i^0(\cF_{\gc^{p_2}_i})-1.
\end{array}\] 
Now clearly $q:=(\bar{x},\bar{\gc})$ is as required. 
\end{proof}

\section{Definable branches and disjoint cones} 
Now we come to the claim on creatures specifically to deal with the bounded
intersection of branches.  We think below of $H_\ell$ as part of a name of a
branch of the $\alpha$-th tree. 

\begin{lemma}
\label{3c.30}
Assume that $u = u_1\cup u_2$ are finite non-empty sets of ordinals, $|u_2
\setminus u_1|=|u_1\setminus u_2|\neq 0$, $w=u_1\cap u_2$. Suppose also that 
$i=j+1<\omega$, $\cF_\ell\in\wpos^{u_\ell}_i$ (for $\ell=1,2$) and the pair
$(\cF_1,\cF_2)$ is balanced. Let $S$ be a finite set (e.g., ${}^{n_*(i)}2$)
and $H_\ell:\pos^{u_\ell}_i\longrightarrow S$. Then there are
$\cF'_1,\cF'_2,\cF$ such that:  
\begin{enumerate}
\item[(a)]  $\cF\in\wpos^u_i$,
\item[(b)]  $\cF'_\ell\leq\cF_\ell$ for $\ell=1,2$ and $\cF=\cF'_1*\cF'_2$,
\item[(c)]  the pair $(\cF'_1,\cF'_2)$ is balanced, 
\item[(d)]  $\|\cF'_\ell\|\ge\frac{1}{8}\|\cF_\ell\|$ for $\ell=1,2$, 
\item[(e)] one of the following occurs:
\begin{enumerate}
\item[$(\alpha)$] if $h\in\set(\cF)$ then $H_1(h\upharpoonleft u_1) \ne H_2(h
  \upharpoonleft u_2)$, 
\item[$(\beta)$] {\em (Case 1)\/}  $u_1 \cap u_2 = \emptyset$: for some
  $s\in S$ we have $h\in\set(\cF) \Rightarrow H_1(h\upharpoonleft u_1)=s= 
  H_2(h\upharpoonleft u_2)$;\\ 
{\em (Case 2)\/} general: for some function $H'$ from  $\pos^w_i$ to $S$ we
have:  
\[h\in\set(\cF)\quad\Rightarrow\quad H_1(h\upharpoonleft u_1)=
H'(h\upharpoonleft (u_1\cap u_2))=H_2(h\upharpoonleft u_2).\]
\end{enumerate}
\end{enumerate}
\end{lemma}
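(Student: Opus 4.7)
The plan is to reduce everything to the base case $w := u_1 \cap u_2 = \emptyset$ (i.e., Case~1 of clause~(e)$(\beta)$), and then lift to the general case by working fiberwise over $\pos^w_i$.

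For the base case $w = \emptyset$, I push the weights of $\cF_\ell$ forward along $H_\ell$, obtaining measures $\mu_\ell$ on $S$ with $\mu_\ell(s) := \|\cF_\ell \cdot \chi_{H_\ell^{-1}(s)}\|$. Since $(\cF_1, \cF_2)$ is balanced, $\mu_1$ and $\mu_2$ have equal \emph{normalized} totals. I then set up a dichotomy on whether a single element of $S$ carries significant mass for both sides: \textbf{(i)} if some $s^* \in S$ satisfies $\mu_\ell(s^*) \ge \|\cF_\ell\|/4$ for both $\ell$, I take $\cF'_\ell := \cF_\ell \cdot \chi_{H_\ell^{-1}(s^*)}$ and rescale the side with larger normalized mass down to restore balance; this gives case~(e)$(\beta)$ Case~1 with $\|\cF'_\ell\| \ge \|\cF_\ell\|/4$. \textbf{(ii)} Otherwise, the sets $U_\ell := \{s : \mu_\ell(s) \ge \|\cF_\ell\|/4\}$ are disjoint, and a short case analysis on their (non)emptyness produces disjoint $T_1, T_2 \subseteq S$ with $\mu_\ell(T_\ell) \ge \|\cF_\ell\|/4$; then $\cF'_\ell := \cF_\ell \cdot \chi_{H_\ell^{-1}(T_\ell)}$, rescaled, realizes case~(e)$(\alpha)$.

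For the general case $w \ne \emptyset$, I apply the base-case dichotomy fiberwise. For each $h_0 \in \pos^w_i$, the fiber restrictions $\cF_\ell^{h_0} := \cF_\ell \cdot \chi_{\{h : h \upharpoonleft w = h_0\}}$ satisfy $\|\cF_1^{h_0}\|/|\pos^{u_1 \setminus w}_i| = \|\cF_2^{h_0}\|/|\pos^{u_2 \setminus w}_i|$ (since $\cF_1 \upharpoonleft w = \cF_2 \upharpoonleft w$ by the balancing assumption), so the base-case dichotomy applies within each fiber and assigns to $h_0$ either ``local~$(\alpha)$'' or ``local~$(\beta)$.'' Since clause~(e) demands a uniform global choice, I restrict to the subcollection $M \subseteq \pos^w_i$ of fibers picking the majority case, losing at most a further factor of $1/2$; crucially, this majority is the same for $\ell = 1, 2$ because $\cF_1 \upharpoonleft w = \cF_2 \upharpoonleft w$ forces the fibers to contribute equal normalized masses on both sides. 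In global case~$(\beta)$ the pointwise choices $s^*_{h_0}$ combine into the required function $H'(h_0) := s^*_{h_0}$. The fiberwise rescaling preserves the identity $\cF'_1 \upharpoonleft w = \cF'_2 \upharpoonleft w$ automatically, hence preserves the balanced-pair condition for $(\cF'_1, \cF'_2)$.

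The main obstacle is clause~(ii) of the base-case dichotomy in its extremal subcase, where both $U_1$ and $U_2$ are empty (so every $\mu_\ell(s) < \|\cF_\ell\|/4$). Here I order $S$ by the signed difference of the normalized masses $\mu_1(s)/|\pos^{u_1}_i| - \mu_2(s)/|\pos^{u_2}_i|$ and take a ``top slice'' for $T_1$ and a ``bottom slice'' for $T_2$, checking the required mass bounds via the facts that the total signed difference is zero and that each per-element mass is bounded by $\|\cF_\ell\|/4$. The final bound $1/8 = 1/4 \cdot 1/2$ comes from combining the dichotomy threshold ($1/4$) with the fiber-majority selection ($1/2$). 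Non-triviality $\cF = \cF'_1 * \cF'_2 \in \wpos^u_i$ is automatic in case~$(\beta)$ from the common-value construction, and follows in case~$(\alpha)$ from the positive per-element masses of $\cF'_\ell$ together with the compatible $w$-marginal structure ensured by balancing.
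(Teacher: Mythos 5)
Your proof is correct, and it has the same architecture as the paper's: decompose fiberwise over $\pos^w_i$, establish a per-fiber dichotomy that shrinks each $\cF_\ell^{h_0}$ to a "matching'' or "disjoint'' piece with a factor $\tfrac14$ loss of (normalized) mass, and then select the majority outcome over fibers (weighted by original fiber mass) to get a uniform global outcome with a further factor $\tfrac12$, yielding the claimed $\tfrac18$. The only genuine divergence is inside the per-fiber step. The paper sorts $S$ once and for all, locates for each $\ell$ a "median'' index $m_{\ell,g}$ of the pushforward $\mu_\ell$, and runs a five-way case analysis driven by whether the median atoms are heavy ($\ge k_g/4$) and by the relative order of $m_{1,g}$ and $m_{2,g}$; the less-than/greater-or-equal split around the medians is what produces disjointness in the hard cases. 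You instead work with the threshold sets $U_\ell=\{s:\mu_\ell(s)\ge\|\cF_\ell\|/4\}$ and treat the "both empty'' subcase by sorting $S$ by the signed difference $\mu_1-\mu_2$ and cutting at a prefix; this works because with zero total signed difference and decreasing order the prefix sums are nonnegative, so the complement still carries $\ge\|\cF_2\|/4$ of $\mu_2$. Your version makes the heavy-atom dichotomy explicit and avoids tracking two median indices, at the cost of the extra rearrangement-inequality step; the two are of comparable length. One small thing worth flagging: you correctly note the need to rescale within each fiber to restore $\cF'_1\upharpoonleft w=\cF'_2\upharpoonleft w$ — this corresponds to the paper's clause $(*)_9$(a) ($\|\cF^{**}_{1,g}\|=\|\cF^{**}_{2,g}\|$) — and the majority selection must be weighted by the \emph{original} fiber masses (the paper's $k_g$), not the post-shrink masses, for the $\tfrac12$ factor to combine cleanly with the $\tfrac14$. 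You got both of these right.
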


\begin{proof}  
Let $\langle s_m:m<m_*\rangle$ list of all members of $S$. Let $g\in\cG :=
\pos^w_i$. Now for every $m \le m_*$ we define 
\begin{enumerate}
\item[$(\oplus_1)$]
\begin{enumerate}
\item[(a)] $\cF_{\ell,g}:\pos^{u_\ell}_i\longrightarrow [0,1]$ is given by
  $\cF_{\ell,g}(h)=\cF_\ell(h)$ if $g\subseteq h$ and $\cF_{\ell,g}(h)=0$ otherwise, 
\item[(b)] $k_{\ell,g}:= \|\cF_{\ell,g}\|$,
\item[(c)] $k^<_{\ell,m,g}:=\sum\big\{\cF_{\ell,g}(h):g\subseteq
  h\in\pos^{u_\ell}_i\ \&\ H_\ell(h)\in\{s_{m_1}: m_1 <m\}\big\}$, 
\item[(d)] $k^=_{\ell,m,g}:=\sum\big\{\cF_{\ell,g}(h):g\subseteq
  h\in\pos^{u_\ell}_i\ \&\ H_\ell(h)=s_m\big\}$, 
\item[(e)] $k^\ge_{\ell,m,g}:=\sum\big\{\cF_{\ell,g}(h):g\subseteq
  h\in\pos^{u_\ell}_i\ \&\ H_\ell(h)\in\{s_{m_1}: m\leq m_1<m_*\}\big\}$. 
\end{enumerate}
\end{enumerate}
Since we are assuming that $(\cF_1,\cF_2)$ is strongly balanced, we have 
\begin{enumerate}
\item[$(\oplus_2)$] $k_{1,g}=k_{2,g}$, call it $k_g$.
\end{enumerate}
Plainly, $k^<_{\ell,m,g},k^=_{\ell,m,g},k^\geq_{\ell,m,g},k_g$ are
non-negative reals and 
\begin{enumerate}
\item[$(*)_1$]  $k^<_{\ell,m,g} + k^\ge_{\ell,m,g} = k_g$.
\end{enumerate}
Hence
\begin{enumerate}
\item[$(*)_2$]  $\max\{k^<_{\ell,m,g},k^\ge_{\ell,m,g}\} \ge k_g/2$.
\end{enumerate}
Also,
\begin{enumerate}
\item[$(*)_3$]  $k^<_{\ell,m,g} \le k^<_{\ell,m+1,g}$ and $k^\ge_{\ell,m,g}
  \ge k^\ge_{\ell,m+1,g}$, in fact $k^<_{\ell,m,g}+k^=_{\ell,m,g}=
  k^<_{\ell,m+1,g}$ and $k^\geq_{\ell,m+1,g}+k^=_{\ell,m,g}=
  k^\geq_{\ell,m,g}$, and 
\item[$(*)_4$] $k^<_{\ell,0,g}=0=k^\ge_{\ell,m_*,g}$.
\end{enumerate}
Hence for some $m_{\ell,g}$ we have
\begin{enumerate}
\item[$(*)_5$] $k^<_{\ell,m_{\ell,g} +1,g}\ge k_g/2$ and
  $k^\ge_{\ell,m_{\ell,g},g}\ge k_g/2$.  
\end{enumerate}
Therefore:
\begin{enumerate}
\item[$(*)_6$]  one of the following possibilities holds:
\begin{enumerate}
\item[(a)]  both $k^<_{\ell,m_{\ell,g},g}$ and $k^\ge_{\ell,m_{\ell,g}
    +1,g}$ are greater than or equal to $k_g/4$, or 
\item[(b)] $k^=_{\ell,m_{\ell,g},g}\ge k_g/4$. 
\end{enumerate}
\end{enumerate}
[Why?  If clause (b) fails then by $(*)_5$ we get clause (a).]

Choose $(\iota_g,\cF^*_{1,g},\cF^*_{2,g})$ as follows.
\begin{enumerate}
\item[$(*)_7$] {\em Case 1:\/}  $k^=_{1,m_{1,g},g}\ge k_g/4$ and
  $k^=_{2,m_{2,g},g} \ge k_g/4$.

Let $\iota_g=1$, and $\cF^*_{\ell,g}:\pos^{u_\ell}_i\longrightarrow [0,1]$
be such that $\cF^*_{\ell,g}(h)=\cF_{\ell,g}(h)$ if $g\subseteq h$ and
$H_\ell(h)=s_{m_{\ell,g}}$, and $\cF^*_{\ell,g}(h)=0$ otherwise (for
$\ell=1,2$). 

{\em Case 2:\/} $k^=_{1,m_{1,g},g}\ge k_g/4$ and $k^=_{2,m_{2,g},g}<k_g/4$. 

Let $\iota_g=2$ and $\cF^*_{\ell,g}:\pos^{u_\ell}_i\longrightarrow [0,1]$
(for $\ell=1,2$) be defined by:\\
$\cF^*_{1,g}(h)=\cF_{1,g}(h)$ if $g\subseteq h$ and
$H_1(h)=s_{m_{1,g}}$, and $\cF^*_{1,g}(h)=0$ otherwise;\\
$\cF^*_{2,g}(h)=\cF_{2,g}(h)$ if $g\subseteq h$ and
$H_2(h)\neq s_{m_{1,g}}$, and $\cF^*_{2,g}(h)=0$ otherwise.

{\em Case 3:\/} $k^=_{1,m_{1,g},g}<k_g/4$ and $k^=_{2,m_{2,g},g}\ge k_g/4$.  

Let $\iota_g=3$ and $\cF^*_{\ell,g}:\pos^{u_\ell}_i\longrightarrow [0,1]$
(for $\ell=1,2$) be defined by:\\
$\cF^*_{1,g}(h)=\cF_{1,g}(h)$ if $g\subseteq h$ and
$H_1(h)\neq s_{m_{2,g}}$, and $\cF^*_{1,g}(h)=0$ otherwise;\\
$\cF^*_{2,g}(h)=\cF_{2,g}(h)$ if $g\subseteq h$ and
$H_2(h)=s_{m_{2,g}}$, and $\cF^*_{2,g}(h)=0$ otherwise.

{\em Case 4:\/}  $k^=_{1,m_{1,g},g}<k_g/4$, $k^=_{2,m_{2,g},g}<k_g/4$ and
$m_{1,g} \le m_{2,g}$. 

Let $\iota_g = 4$ and $\cF^*_{\ell,g}:\pos^{u_\ell}_i\longrightarrow [0,1]$
(for $\ell=1,2$) be defined by:\\
$\cF^*_{1,g}(h)=\cF_{1,g}(h)$ if $g\subseteq h$ and $H_1(h) \in\{s_0,\ldots,
s_{m_{1,g}-1}\}$, and $\cF^*_{1,g}(h)=0$ otherwise;\\  
$\cF^*_{2,g}(h)=\cF_{2,g}(h)$ if $g\subseteq h$ and
$H_2(h)\in\{s_{m_{1,g}},\ldots,s_{m_*-1}\}$, and $\cF^*_{2,g}(h)=0$ otherwise.

{\em Case 5:\/} $k^=_{1,m_{1,g},g}<k_g/4$, $k^=_{2,m_{2,g},g}<k_g/4$ and
$m_{1,g} > m_{2,g}$. 

Let $\iota_g=5$ and $\cF^*_{\ell,g}:\pos^{u_\ell}_i\longrightarrow [0,1]$
(for $\ell=1,2$) be defined by:\\
$\cF^*_{1,g}(h)=\cF_{1,g}(h)$ if $g\subseteq h$ and $H_1(h)
\in\{s_{m_{2,g}},\ldots,s_{m_*-1}\}$, and $\cF^*_{1,g}(h)=0$ otherwise;\\   
$\cF^*_{2,g}(h)=\cF_{2,g}(h)$ if $g\subseteq h$ and
$H_2(h)\in\{s_0,\ldots,s_{m_{2,g}-1}\}$, and $\cF^*_{2,g}(h)=0$ otherwise. 
\end{enumerate}
Now:
\begin{enumerate}
\item[$(*)_8$]  $\|\cF^*_{\ell,g}\|\ge\frac{1}{4}\|\cF_{\ell,g}\|=\frac{1}{4}
  k_g$ for $\ell=1,2$. 
\end{enumerate}
[Why? By $(\oplus_2)$ and $(*)_7$ - check each case.]

Finally choose $\cF^{**}_{\ell,g}$ (for $\ell=1,2$ and $g\in \cG$) such that: 
\begin{enumerate}
\item[$(*)_9$] 
\begin{enumerate}
\item[(a)] $\cF^{**}_{\ell,g}\leq \cF^*_{\ell,g}$, $\|\cF^{**}_{\ell,g}\|
  \ge \frac{1}{4}k_g$, and $\|\cF^{**}_{1,g}\|=\|\cF^{**}_{2,g}\|$,  
\item[(b)] {\em if\/} $(\iota_g=1\wedge m_{1,g} = m_{2,g})$ {\em then\/} for
  some $s=s(g)\in S$  
\[h_1\in\set(\cF^{**}_{1,g})\wedge h_2\in\set(\cF^{**}_{2,g})\quad
\Rightarrow \quad H_1(h_1) = H_2(h_2)=s,\]
\item[(c)] {\em if\/} $(\iota_g\ne 1\vee m_{1,g}\ne m_{2,g})$ {\em then\/} 
\[h_1\in\set(\cF^{**}_{1,g})\wedge h_2\in\set(\cF^{**}_{2,g})\quad
\Rightarrow \quad H_1(h_1) \ne H_2(h_2).\]  
\end{enumerate}
\end{enumerate}
[Why possible?  We can choose them to satisfy clause (a) by $(*)_8$ and
clauses (b),(c) follow - look at the choices inside $(*)_7$.]   

Now we stop fixing $g\in \cG$. Put
\[\cG^1=\{g\in\cG:\iota_g=1\mbox{ and }m_{1,g} = m_{2,g}\}\quad \mbox{ and } 
\quad \cG^2=\{g\in\cG:\iota_g\ne 1\mbox{ or }m_{1,g}\ne m_{2,g}\}.\] 
When we vary $g\in\cG$, obviously
\begin{enumerate}
\item[$(\circledast_1)$] $\cF_\ell$ is the disjoint sum of $\langle
  \cF_{\ell,g}:g \in \cG\rangle$,
\end{enumerate} 
and hence
\begin{enumerate}
\item[$(\circledast_2)$] $\|\cF_\ell\|=\sum\{k_g:g\in\cG\}$.
\end{enumerate}
As $\cG=\pos^w_i$ is the disjoint union of $\cG^1,\cG^2$, plainly
\begin{enumerate}
\item[$(\circledast_3)$] for some $\cG'\in\{\cG^1,\cG^2\}$ the following
  occurs: 
\[\sum\{k_g:g\in\cG'\}\ge\|\cF_1\|/2=\|\cF_2\|/2.\]
\end{enumerate}
Lastly, we put $\cF'_\ell=\sum\{\cF^{**}_{\ell,g}: g\in \cG'\}$ (for
$\ell=1,2$). We note that  
\[\|\cF'_\ell\|=\sum\{\|\cF^{**}_{\ell,g}\|:g\in \cG'\}\ge \sum\{\frac{1}{4}
k_g:g\in \cG'\}\ge\frac{1}{4}(\|\cF_\ell\|/2)= \frac{1}{8}\|\cF_\ell\|.\] 
Now it should be clear that $\cF'_1,\cF'_2$ and $\cF=\cF'_1*\cF'_2$ are as
required. 
\end{proof}

\begin{crlem}
\label{3c.26}
Assume that
\begin{enumerate}
\item[(a)] $u_1,u_2$ are finite subsets of {\rm Ord}, $|u_1\setminus
  u_2|=|u_2\setminus u_1|\neq 0$,
\item[(b)] $\cF_\ell\in\wpos^{u_\ell}_i$, $i<\omega$ and $\|\cF_\ell\|\geq a 
  \times |\pos^{u_\ell}_i|>0$, 
\item[(c)] $H_\ell$ is a function from $\bS_{u_\ell,i+1}$ to ${}^{n_*(i)}2$, 
\item[(d)] the pair $(\cF_1,\cF_2)$ is balanced.
\end{enumerate}
Let $u = u_1 \cup u_2$ and $w = u_1 \cap u_2$ and $|u|<n_*(i-1)$. Then we
can find $\cF'_\ell\in\wpos^{u_\ell}_i$ and partial functions $\bh_\ell$
from $\bS_{u_\ell,i} \times\bS_{w,i+1}$ into ${}^{n_*(i)}2$ for $\ell=1,2$
and $\cF\in\wpos^u_i$ such that:  
\begin{enumerate}
\item[$(\alpha)$] $\cF'_\ell\leq\cF_\ell$, $\|\cF'_\ell\|\geq
  8^{-k_*}\|\cF_\ell\|$, where $k_*=|\bS_{u,i}|<\ell^*_i$, and the pair
  $(\cF'_1,\cF'_2)$ is balanced, 
\item[$(\beta)$]  $\cF=\cF'_1*\cF'_2$ and so $\cF\upharpoonleft
  u_\ell\leq\cF_\ell$ for $\ell=1,2$ and $\|\cF\|/|\pos^u_i|\ge 
  \frac{a^3}{2^{9k_*+3}}$,  
\item[$(\gamma)$] if $h\in\set(\cF)$, $\bar{x}\in\bS_{u,i}$ (so
  $\lh(\bar{x})= i$) and $\bar{y}=\suc_{\bar{x}}(h)\in \bS_{u,i+1}$, then 
\[H_1(\bar{y}\upharpoonleft u_1)=H_2(\bar{y}\upharpoonleft u_2)\quad
\Rightarrow \quad \bh_1(\bar{x}\upharpoonleft u_1,\bar{y} \upharpoonleft w)
= \bh_2(\bar{x}\upharpoonleft u_2,\bar{y} \upharpoonleft w)=
H_1(\bar{y}\upharpoonleft u_1)=H_2(\bar{y}\upharpoonleft u_2).\]
\item[$(\delta)$] moreover, for each $\bar{x} \in \bS_{u,i}$ the truth value 
  of the equality $H_1(\bar{y}\upharpoonleft u_1)=H_2(\bar{y}\upharpoonleft
  u_2)$ in clause $(\gamma)$ is the same for all $h\in\set(\cF)$.
\end{enumerate}
\end{crlem}

\begin{proof} 
Let $\langle\bar{x}_k:k<k_*\rangle$ list $\bS_{u,i}$ (without
repetitions). We choose $(\cF_k,\cF_{1,k},\cF_{2,k})$ by induction on
$k\le k_*$ such that: 
\begin{enumerate}
\item[(i)] $\cF_{\ell,k}\in\wpos^{u_\ell}_i$ for $\ell=1,2$, 
\item[(ii)]  if $k=0$, then $\cF_{\ell,k} = \cF_\ell$,
\item[(iii)] $\cF_{\ell,k}$ is $\leq$--decreasing with $k$, i.e.,
  $\cF_{\ell,k+1} \leq \cF_{\ell,k}$, 
\item[(iv)] $\|\cF_{\ell,k}\|\ge\frac{1}{8^k}\|\cF_\ell\|$, 
\item[(v)]  $(\cF_{1,k},\cF_{2,k})$ is balanced,
\item[(vi)]  $\cF_k=\cF_{1,k}*\cF_{2,k}$, so also $\leq$--decreasing with
  $k$,  
\item[(vii)] for each $k$ one of the following occurs:
\begin{enumerate}
\item[$(\alpha)$] {\em if\/} $h\in\set(\cF_{k+1})$ and
  $\bar{y}=\suc_{\bar{x}_k}(h) \in \bS_{u,i+1}$, {\em then\/}
  $H_1(\bar{y}\upharpoonleft u_1)\ne H_2(\bar{y}\upharpoonleft u_2)$;
\item[$(\beta)$] {\em if\/} $h',h''\in\set(\cF_{k+1})$ and $h'\upharpoonleft
  w = h''\upharpoonleft w$, $\bar{y}'=\suc_{\bar{x}_k}(h')$, $\bar{y}''=
  \suc_{\bar{x}_k}(h'')$, {\em then\/} 
\[H_1(\bar{y}'\upharpoonleft u_1)=H_1(\bar{y}''\upharpoonleft u_1)= 
H_2(\bar{y}'\upharpoonleft u_2)=H_2(\bar{y}''\upharpoonleft u_2).\]  
\end{enumerate}
\end{enumerate}
If we carry out the definition then $\cF=\cF_{k_*}$ is as required.  Note
that $\|\cF_{\ell,k_*}\|\geq\frac{\|\cF_\ell\|}{8^{k_*}}$, hence the bound on
$\|\cF\|$, i.e. clause $(\beta)$ of \ref{3c.26} holds by \ref{x38}; that is
we choose $8^{-k_*}a$ here for $a$ there and $\frac{a^3}{8}$ there means
$\frac{(8^{-k_*}a)^3}{8}= \frac{a^3}{2^{9k_*+3}}$ here. 

The initial step of $k=0$ is obvious. For the inductive step, for $k+1$ we
define $H_{\ell,k}$ as follows: for $h \in\pos^{u_\ell}_i$ we put
$H_{\ell,k}(h)=H_\ell(\suc_{\bar{x}_k\upharpoonleft u_\ell}(h))$ and we
apply Lemma \ref{3c.30} to $\cF_{1,k},\cF_{2,k},H_{1,k},H_{2,k}$ here 
standing for $\cF_1,\cF_2,H_1,H_2$ there. This way we obtain
$\cF_{1,k+1},\cF_{2,k+1}$ and we set $\cF_{k+1}=\cF_{1,k+1}*\cF_{2,k+1}$. If
in clause \ref{3c.30}(e) subclause $(\alpha)$ holds, then the demand in
(vii)($\alpha$) is satisfied. Otherwise, we get a function $H'$ such that
for each $h\in\set(\cF_{k+1})$ we have
\[H_{1,k}(h\upharpoonleft u_1)=H'(h\upharpoonleft w)=H_{2,k}(h
\upharpoonleft u_2).\] 
Consequently, the demand in (vii)($\beta$) is fulfilled. Moreover this
choice is O.K. for any $\cF'\subseteq \cF_{k+1}$, so we are done.  
\end{proof}

\begin{lemma}
\label{2q.29}
\begin{enumerate}
\item Assume that $u \subseteq \text{\rm Ord}$ is finite, $\alpha\in u$ and 
$\gc\in\underline{\CR}^u_i$, $i>0$. Suppose also that there are
$\bar{x}\in\bS_{u,i}$ and functions $\bh_1,\bh_2$ such that
\begin{enumerate}
\item[if] $h\in\set(\cF_\gc)$ and $\bar{y}=\suc_{\bar{x}}(h)=
  \bar{x}\conc\langle y\rangle$ (see \ref{2q.8}(I)$(\zeta)$), 
\item[then] $\eta_\ell:=\bh_\ell(h\upharpoonleft (u
  \setminus\{\alpha\}))\in {}^{n_*(i)}2$ is well defined for $\ell=1,2$ and 
  $(g_y(\alpha)^{-1} \circ f_y(\alpha))(\eta_1)=\eta_2$. 
\end{enumerate}
Then $\nor^0_i(\cF_{\gc})=0$.

\item Assume that $w\subseteq u\subseteq \text{\rm Ord}$ are finite,
  $\alpha_1,\alpha_2\in u\setminus w$, $\alpha_1\neq\alpha_2$ and  
$\gc\in\underline{\CR}^u_i$, $i>0$. Suppose also that $\bar{x}\in\bS_{u,i}$
and there are functions $\bh_1,\bh_2$ such that
\begin{enumerate}
\item[if] $h\in\set(\cF_\gc)$ and $\bar{y}=\suc_{\bar{x}}(h)=
  \bar{x}\conc\langle y\rangle$, 
\item[then] $\eta_\ell:=\bh_\ell(\bar{x},\bar{y}\upharpoonleft w)\in
  {}^{n_*(i)}2$ is well defined for $\ell=1,2$ and  
\[(g_y(\alpha_1)^{-1} \circ f_y(\alpha_1))(\eta_1)= (g_y(\alpha_2)^{-1}
\circ f_y(\alpha_2))(\eta_2).\] 
\end{enumerate}
Then $\nor^0_i(\cF_{\gc})=0$.
\end{enumerate}
\end{lemma}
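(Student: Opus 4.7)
The plan is to reduce both parts of the lemma to the single density estimate
$\|\cF_\gc\|/|\pos^u_i|\leq 1/2^{n_*(i)-n_*(i-1)}$. Once this is established, I would compare it with the threshold in Definition~\ref{2q.8}(I)($\delta$) by a routine $\beth$-hierarchy calculation: $n_*(i-1) = \beth(30i)\ll n_*(i)/2$ gives $n_*(i)-n_*(i-1) \geq \beth(30i+29)$, while iterated exponential estimates starting from $k^*_i = \beth(30i+20)$ yield $\log_2\!\big((k^*_i)^{3^{k^*_i}-1}\big) < \beth(30i+23)$. Hence $2^{n_*(i)-n_*(i-1)} \geq (k^*_i)^{3^{k^*_i}-1}$ by a wide margin, so the inequality $|\pos^u_i|\geq \|\cF_\gc\|\cdot (k^*_i)^{3^{k^*_i}-1}$ holds and $\nor^0_i(\cF_\gc) = 0$ follows immediately. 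Note also that since $\cF_\gc$ is $[0,1]$-valued, $\|\cF_\gc\|\leq |\set(\cF_\gc)|$, so it suffices to bound the density of $\set(\cF_\gc)$ inside $\pos^u_i$.

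The counting exploits the product structure of $\pos^u_i$ over the index set $D^u_i = u\times \Per({}^{n_*(i-1)}2)$: the value of $h\in\pos^u_i$ at each $(\beta,g)$ is an independently chosen pair of permutations of ${}^{n_*(i)}2$ whose projection to ${}^{n_*(i-1)}2$ equals $g$. Let $S = (2^{n_*(i)-n_*(i-1)}!)^{2^{n_*(i-1)}}$ count such permutations, so each coordinate contributes $S^2$ options. For part~(1), I would fix $h$ outside the single coordinate $(\alpha, g_{x_{i-1}}(\alpha))$; this determines $\eta_1,\eta_2$ (as they depend only on $h\upharpoonleft (u\setminus\{\alpha\})$) and leaves only the pair $(f_y(\alpha),g_y(\alpha)) = h(\alpha,g_{x_{i-1}}(\alpha))$ to vary. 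Since both permutations have the common projection $g_{x_{i-1}}(\alpha)$, their composition $\pi := g_y(\alpha)^{-1}\circ f_y(\alpha)$ is fiber-preserving for the projection ${}^{n_*(i)}2 \to {}^{n_*(i-1)}2$. The constraint $\pi(\eta_1)=\eta_2$ forces $\eta_1,\eta_2$ into a common fiber (else the constraint is unsatisfiable and $\set(\cF_\gc)=\emptyset$, so we are trivially done), and then restricts $\pi$ to a fraction $1/2^{n_*(i)-n_*(i-1)}$ of fiber-preserving permutations. Since $f_y(\alpha) = g_y(\alpha)\pi$ is recovered from $g_y(\alpha)$ (arbitrary) and $\pi$, the constraint cuts the available $S^2$ pairs by exactly this factor; multiplying across all the free coordinates yields the required density bound.

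Part~(2) proceeds identically except that the constraint couples two coordinates. Since $\bh_1,\bh_2$ read only $h\upharpoonleft w$ and $w\subseteq u\setminus\{\alpha_1,\alpha_2\}$, fixing $h$ outside $\{\alpha_1,\alpha_2\}\times \Per({}^{n_*(i-1)}2)$ determines $\eta_1,\eta_2$. The equation $(g_y(\alpha_1)^{-1}f_y(\alpha_1))(\eta_1) = (g_y(\alpha_2)^{-1}f_y(\alpha_2))(\eta_2)$ introduces a common value $\zeta$; I would sum over the at most $2^{n_*(i)-n_*(i-1)}$ valid $\zeta$'s (those in the common fiber of $\eta_1$ and $\eta_2$) and apply the part~(1) count $S^2/2^{n_*(i)-n_*(i-1)}$ at each of $\alpha_1,\alpha_2$ separately, obtaining at most $S^4/2^{n_*(i)-n_*(i-1)}$ valid pairs out of the total $S^4$. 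This gives the same density bound $1/2^{n_*(i)-n_*(i-1)}$ and the same conclusion. The only mild obstacle throughout is the bookkeeping for fiber-preserving permutations and the $\beth$-estimates; neither is deep, but both must be handled carefully to ensure the factor of $2^{n_*(i)-n_*(i-1)}$ really appears.
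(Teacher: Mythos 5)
Your proof is correct and, for part (1), is essentially the argument in the paper: fix $h\upharpoonleft(u\setminus\{\alpha\})$, observe that $g_y(\alpha)^{-1}\circ f_y(\alpha)$ is fiber-preserving (because $f_y(\alpha)$ and $g_y(\alpha)$ share the projection $g_{x_{i-1}}(\alpha)$), and conclude that the constraint $\pi(\eta_1)=\eta_2$ cuts the density by at most $1/2^{n_*(i)-n_*(i-1)}$, which is below the threshold $(k^*_i)^{1-3^{k^*_i}}$. Your $\beth$-hierarchy bookkeeping is also fine.

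Where you diverge from the paper is part (2). You do a direct two-coordinate count: fix $h$ outside $\{\alpha_1,\alpha_2\}$ (which pins down $\eta_1,\eta_2$ since $\bh_\ell$ reads only $\bar x$ and $\bar y\upharpoonleft w$ with $w\subseteq u\setminus\{\alpha_1,\alpha_2\}$), sum over the common value $\zeta$ in the shared fiber, and multiply the two independent $1/2^{n_*(i)-n_*(i-1)}$ factors against the $|F|=2^{n_*(i)-n_*(i-1)}$ choices of $\zeta$, getting the same net density $1/2^{n_*(i)-n_*(i-1)}$. The paper instead reduces (2) to (1): it defines auxiliary functions on $\pos^{u\setminus\{\alpha_1\}}_i$ by $\bh'_1(e)=\bh_1(\bar x,\bar y_e\upharpoonleft w)$ and $\bh'_2(e)=\big(g_{y_e}(\alpha_2)^{-1}\circ f_{y_e}(\alpha_2)\big)\big(\bh_2(\bar x,\bar y_e\upharpoonleft w)\big)$ — absorbing the $\alpha_2$-twist into $\bh'_2$ — and then invokes part (1) with $\alpha=\alpha_1$. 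Both routes give the identical bound; the paper's is a one-line reduction and avoids repeating the fiber-counting, while yours makes the combinatorics explicit and is arguably more transparent about \emph{why} the factor $2^{n_*(i)-n_*(i-1)}$ does not compound. Either is acceptable; there is no gap.
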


\begin{proof}  
(1)\quad First we try to give an upper bound to
$|\set(\cF_{\gc})|/|\pos^u_i|$. Thinking of ``randomly drawing'' $h_0 \in
\pos^{u \setminus\{\alpha\}}_i$ with equal probability, we get an upper
bound to the fraction of $h\in\pos^u_i$, $h\upharpoonleft (u \setminus
\{\alpha\}) = h_0$ such that if $\suc_{\bar{x}}(h)=\bar{x}\conc \langle
y\rangle$, then   

$\eta_\ell:=\bh_\ell(h\upharpoonleft (u\setminus\{\alpha\}))\in
{}^{n_*(i)}2$ is well defined for $\ell=1,2$ and  $(g^{-1}_y(\alpha) \circ
f_y(\alpha))(\eta_1)=\eta_2$.  

\noindent Since
\[g_y(\alpha)(\nu)\rest n_*(i-1) = g_{x_{i-1}}(\alpha)(\nu\rest n_*(i-1)) =
f_y(\alpha)(\nu) \rest n_*(i-1))\quad\mbox{ for all }\nu \in {}^{n_*(i)}2,\] 
clearly it is $\leq 1/2^{n_*(i)-n_*(i-1)}$. So $\|\cF_\gc\|/|\pos^u_i|\leq 
|\set(\cF_\gc)|/|\pos^u_i| \le 1/2^{n_*(i)-n_*(i-1)}<(k^*_i)^{1-3^{k^*_i}}$
and consequently $\nor^0_i(\cF_{\gc})=0$. 
\medskip

\noindent (2)\quad For $e\in\pos^{u\setminus\{\alpha_1\}}_i$ let $\bar{y}_e=
\suc_{\bar{x}\upharpoonleft (u\setminus\{\alpha_1\})}(e)=
(\bar{x}\upharpoonleft (u\setminus\{\alpha_1\}))\conc \langle y_e\rangle$,
$\bh'_1(e)=\bh_1(\bar{x},\bar{y}_e\upharpoonleft w)$ and $\bh'_2(e)= \big(
g_{y_e}(\alpha_2)^{-1}\circ f_{y_e}(\alpha_2)\big)\big(\bh_2(\bar{x},
\bar{y}_e \upharpoonleft w)\big)$. Since $\alpha_1,\alpha_2\notin w$ and
$\alpha_2\in u\setminus\{\alpha_1\}$, for each $h\in \set(\cF_\gc)$ the
values $\bh'_1\big(h\upharpoonleft (u\setminus\{\alpha_1\})\big),
\bh'_2\big(h\upharpoonleft (u\setminus\{\alpha_1\})\big)$ are well defined
and, letting $\bar{y}=\suc_{\bar{x}}(h)=\bar{x}\conc\langle y\rangle$,
\[\big(g_y(\alpha_1)^{-1}\circ f_y(\alpha_1)\big)\big(\bh'_1(h
\upharpoonleft (u\setminus\{\alpha_1\}))\big) = \bh'_2(h\upharpoonleft
(u\setminus\{\alpha_1\})).\]
Therefore clause (1) applies and $\nor^0_i(\cF_\gc)=0$. 
\end{proof}

Before we state the main corollary to Crucial Lemma \ref{3c.26}, let us
recall that if $\emptyset\neq w\subseteq u$, $\gc\in\underline{\CR}^u_i$,
then $\proj_w(\gc)=(\cF_{\gc}\upharpoonleft w,m_{\gc})\in
\underline{\CR}^w_i$ (see Definition \ref{2q.48}($\beta$)). Also, if
$\emptyset=w =u_1\cap u_2$ and $\gc_\ell\in\underline{\CR}^{u_\ell}_i$, then
$\proj_w(\gc_1)=\proj_w(\gc_2)$ will mean that $\nor_i(\gc_1)=\nor_i(\gc_2)$
and $m_{\gc_1}=m_{\gc_2}$.

\begin{crcor}
\label{cruccor}
Assume that
\begin{enumerate}
\item[(a)] $u_1,u_2$ are finite subsets of {\rm Ord}, $|u_1\setminus
  u_2|=|u_2\setminus u_1|$, $u = u_1 \cup u_2$, $w = u_1 \cap u_2$,
  $\alpha_1\in u_1\setminus u_2$ and $\alpha_2\in u_2\setminus u_1$,
  $1<i<\omega$, $|u|<n_*(i-1)$,  
\item[(b)] $\gc_\ell\in\underline{\CR}^{u_\ell}_i$ and $\nor_i(\gc_\ell)>2$
  (for $\ell=1,2$), and $\proj_w(\gc_1)=\proj_w(\gc_2)$,  
\item[(c)] $H_\ell:\bS_{u_\ell,i+1}\longrightarrow {}^{n_*(i)}2$. 
\end{enumerate}
Then we can find $\gd_\ell\in\underline{\Sigma}(\gc_\ell)$, $\ell=1,2$, such
that:   
\begin{enumerate}
\item[$(\alpha)$] $\proj_w(\gd_1)=\proj_w(\gd_2)$,
\item[$(\beta)$]  $\nor_i(\gd_\ell)\geq \nor_i(\gc_\ell)-1$,
\item[$(\gamma)$] if $h\in\set(\cF_{\gd_1}*\cF_{\gd_2})$,
  $\bar{x}\in\bS_{u,i}$ and $\bar{y}=\suc_{\bar{x}}(h)\in \bS_{u,i+1}$, and 
  $\eta_\ell=H_\ell(\bar{y}\upharpoonleft u_\ell)\in {}^{n_*(i)}2$ (for
  $\ell=1,2$), then 
\[\eta_1=\eta_2\quad \Rightarrow \quad \big(g_{y_i}(\alpha_1)^{-1}\circ
f_{y_i}(\alpha_1)\big)(\eta_1)\neq \big(g_{y_i}(\alpha_2)^{-1}\circ
f_{y_i}(\alpha_2)\big)(\eta_2).\] 
\end{enumerate}
\end{crcor}

\begin{proof}
Let $\cF_\ell=\cF_{\gc_\ell}$. By assumptions (a,b), the pair
$(\cF_1,\cF_2)$ is strongly balanced and $\nor^0_i(\cF_\ell)>
(\ell^*_i)^2$. Apply Crucial Lemma \ref{3c.26} to choose
$\cF'_1,\cF'_2,\bh_1,\bh_2$ such that  
\begin{enumerate}
\item[$(*)_1$] $\cF'_\ell\in\wpos^{u_\ell}_i$, $\cF'_\ell\leq \cF_\ell$,
  $\|\cF'_\ell\|\geq 8^{-k_*}\cdot \|\cF_\ell\|$ (where $k_*=|\bS_{u,i}|$),
  and the pair $(\cF'_1,\cF'_2)$ is balanced,
\item[$(*)_2$] $\bh_\ell:\bS_{u_\ell,i}\times\bS_{w,i+1}\longrightarrow
  {}^{n_*(i)}2$, 
\item[$(*)_3$] if $h\in\set(\cF'_1*\cF'_2)$, $\bar{x}\in \bS_{u,i}$ and
  $\bar{y}=\suc_{\bar{x}}(h)\in\bS_{u,i+1}$, then 
\[H_1(\bar{y}\upharpoonleft u_1)=H_2(\bar{y}\upharpoonleft u_2)\quad
\Rightarrow\quad \bh_1(\bar{x}\upharpoonleft u_1,\bar{y}\upharpoonleft w)=
\bh_2(\bar{x}\upharpoonleft u_2,\bar{y}\upharpoonleft w)=
H_1(\bar{y}\upharpoonleft u_1) =H_2(\bar{y}\upharpoonleft u_2).\]
\end{enumerate}
Next, for $\bar{y}\in\bS_{u_\ell,i+1}$, $\ell=1,2$, put
\[H'_\ell(\bar{y})=\big(g_{y_i}(\alpha_\ell)^{-1}\circ f_{y_i}(\alpha_\ell)
\big)\big(\bh_\ell(\bar{y}\rest i,\bar{y}\upharpoonleft w)\big)\in
{}^{n_*(i)}2.\]  
Apply \ref{3c.26} again (this time using clause ($\delta$) there too) to
choose $\cF''_1,\cF''_2,\bh_1'',\bh_2''$ such that 
\begin{enumerate}
\item[$(*)_4$] $\cF''_\ell\in\wpos^{u_\ell}_i$, $\cF''_\ell\leq \cF_\ell'$, 
  $\|\cF''_\ell\|\geq 8^{-k_*}\cdot \|\cF_\ell'\|$, and the pair
  $(\cF'_1,\cF'_2)$ is balanced, 
\item[$(*)_5$] $\bh''_\ell:\bS_{u_\ell,i}\times\bS_{w,i+1}\longrightarrow
  {}^{n_*(i)}2$, 
\item[$(*)_6$] for each $\bar{x}\in\bS_{u,i}$ one of the following occurs:
\begin{enumerate}
\item[$(\alpha)_{\bar{x}}$] if $h\in\set(\cF''_1*\cF''_2)$ and $\bar{y}=
  \suc_{\bar{x}}(h)\in\bS_{u,i+1}$, then $H'_1(\bar{y}\upharpoonleft u_1)
  \neq H_2'(\bar{y}\upharpoonleft u_2)$, or
\item[$(\beta)_{\bar{x}}$] if $h\in\set(\cF''_1*\cF''_2)$ and $\bar{y}=
  \suc_{\bar{x}}(h)\in\bS_{u,i+1}$, then
\[\bh''_1(\bar{x}\upharpoonleft u_1,\bar{y}\upharpoonleft w)=
\bh''_2(\bar{x}\upharpoonleft u_2,\bar{y}\upharpoonleft w)=
H'_1(\bar{y}\upharpoonleft u_1) =H'_2(\bar{y}\upharpoonleft u_2).\]
\end{enumerate}
\end{enumerate}
It follows from $(*)_1+(*)_4$ that $\frac{|\pos^{u_\ell}_i|}{\|\cF''_\ell\|}
\leq 64^{k_*}\cdot \frac{|\pos^{u_\ell}_i|}{\|\cF_\ell\|}<64^{\ell^*_i}
\cdot \frac{|\pos^{u_\ell}_i|}{\|\cF_\ell\|}$ and hence (remembering that
$\nor^0_i(\cF_\ell)>(\ell^*_i)^2$) we have
\[\begin{array}{l}
\displaystyle
\nor^0_i(\cF''_\ell)\geq k^*_i-\log_3\Big(\log_{k^*_i}\big(\frac{k^*_i\cdot
  |\pos^{u_\ell}_i|}{\|\cF_\ell\|}\cdot 64^{\ell^*_i}\big)\Big)\geq 
k^*_i-\log_3\Big(\log_{k^*_i}\big(\frac{k^*_i\cdot
  |\pos^{u_\ell}_i|}{\|\cF_\ell\|}\cdot k^*_i\big)\Big)\geq \\
\ \\
\displaystyle
k^*_i-\log_3\Big(\log_{k^*_i}\big(\big(\frac{k^*_i\cdot
  |\pos^{u_\ell}_i|}{\|\cF_\ell\|}\big)^3\big)\Big)=
k^*_i-\log_3\Big(3\log_{k^*_i}\big(\frac{k^*_i\cdot
  |\pos^{u_\ell}_i|}{\|\cF_\ell\|}\big)\Big)=\nor^0_i(\cF_\ell)-1>\ell^*_i.
\end{array}\]
In particular,
$\|\cF_\ell''\|/|\pos^{u_\ell}_i|>(k^*_i)^{1-3^{k^*_i-\ell^*_i}}$ and by
\ref{x38}(2) we get 
\[\frac{\|\cF''_1*\cF''_2\|}{|\pos^u_i|}\geq
\Big(\frac{1}{2}(k^*_i)^{1-3^{k^*_i-\ell^*_i}}\Big)^3,\]
so
\begin{enumerate}
\item[$(*)_7$] $\nor^0_i(\cF_1''*\cF_2'')\geq k^*_i- \log_3\big(
  \log_{k^*_i}\big(k^*_i\cdot (2(k^*_i)^{3^{k^*_i-\ell^*_i}-1}
  \big)^3\big)>\ell^*_i-2>0$.  
\end{enumerate} 
Now we claim that 
\begin{enumerate}
\item[$(*)_8$] in clause $(*)_6$ before, the possibility $(\beta)_{\bar{x}}$
  cannot occur.
\end{enumerate}
Suppose towards contradiction that for some $\bar{x}\in\bS_{u,i}$ the
statement in $(\beta)_{\bar{x}}$ holds true. Then, remembering
$\bh_\ell:\bS_{u_\ell,i} \times\bS_{w,i+1}\longrightarrow {}^{n_*(i)}2$, we have
\begin{enumerate}
\item[$(\circledast)$] if $h\in\set(\cF''_1*\cF''_2)$ and
  $\bar{y}=\suc_{\bar{x}}(h)$ and $\eta_\ell=\bh_\ell(\bar{x}\upharpoonleft
  u_\ell, \bar{y}\upharpoonleft w)$ (for $\ell=1,2$),\\
 then $\big(g_{y_i}(\alpha_1)^{-1}\circ f_{y_i}(\alpha_1)\big)(\eta_1) =
  \big(g_{y_i}(\alpha_2)^{-1}\circ f_{y_i}(\alpha_2)\big)(\eta_2)$.
\end{enumerate} 
Since $\alpha_1\neq\alpha_2$ are in $u\setminus w$ we may apply Lemma
\ref{2q.29}(2) to get that $\nor^0_i(\cF''_1*\cF''_2)=0$, contradicting 
$(*)_7$. 

Thus, putting together $(*)_3$ and $(*)_6+(*)_8$ we conclude that 
\begin{enumerate}
\item[$(*)_9$] if $h\in\set(\cF''_1*\cF''_2)$, $\bar{x}\in \bS_{u,i}$ and 
  $\bar{y}=\suc_{\bar{x}}(h)$, $\eta_\ell=H_\ell(\bar{y}\upharpoonleft
  u_\ell)$ (for $\ell=1,2$), then 
\[\eta_1=\eta_2\quad\Rightarrow\quad \big(g_{y_i}(\alpha_1)^{-1}\circ
f_{y_i}(\alpha_1)\big)(\eta_1) \neq\big(g_{y_i}(\alpha_2)^{-1}\circ
f_{y_i}(\alpha_2)\big)(\eta_2).\] 
\end{enumerate} 
Now we set $\gd_\ell=(\cF''_\ell,m_{\gc_\ell})$ (for $\ell=1,2$). Since
$\cF''_\ell\leq \cF'_\ell\leq \cF_\ell$ and $\nor^0_i(\cF''_\ell)\geq
\nor^0_i(\cF_\ell)-1>m_{\gc_\ell}$, we know that $\gd_\ell\in
\underline{\Sigma}(\gc_\ell)$, and since $(\cF''_1,\cF''_2)$ is 
balanced we conclude $\proj_w(\gd_1)=\proj_w(\gd_2)$. Also $\nor_i(\gd_\ell)
\geq \nor_i(\gc_\ell)-1$ and thus $\gd_1,\gd_2$ are as required in
$(\alpha),(\beta)$. Finally, the demand $(\gamma)$ is given by $(*)_9$.
\end{proof}

\begin{lemma}
\label{3c.34} Assume that 
\begin{enumerate}
\item[(a)] $u_1,u_2\subseteq \text{\rm Ord}$ are finite non-empty sets of
  the same size, $|u_1\setminus u_2|=|u_2\setminus u_1|$,
\item[(b)] $w=u_1\cap u_2$, $u=u_1\cup u_2$, and for $\ell=1,2$:
\item[(c)] $p_\ell\in\underline{\bbQ}_{u_\ell}$ and $\alpha_{\ell,k}\in
  u_\ell \setminus w$ and $\name{\rho}_{\ell,k}$ is a
  $\underline{\bbQ}_{u_\ell}$--name for a branch of
  $\name{t}_{\alpha_{\ell,k}}$ (i.e., this is forced) for $k<\omega$, and  
\item[(d)] $\bj_{w,u_1}(p_1),\bj_{w,u_2}(p_2)$ are compatible in
  $\underline{\bbQ}_w$ (see \ref{2q.48}, \ref{2q.45}).   
\end{enumerate}
Then there is $q\in\underline{\bbQ}_u$ such that
$p_\ell\le_{\underline{\bbQ}_{u_\ell}}\bj_{u_\ell,u}(q)$ for $\ell=1,2$ and  
\[q\Vdash_{\underline{\bbQ}_u}\mbox{`` }{\name{\rho}_{1,k}},{\name{\rho}_{2,k}}\mbox{
  have bounded intersection ''}.\]
\end{lemma}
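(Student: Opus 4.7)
The plan is a fusion-style construction: first assemble $p_1, p_2$ into a single initial condition $q_0\in\underline{\bbQ}_u$, then iteratively refine $q_0$ via the Crucial Corollary~\ref{cruccor} so that at every large enough level, whenever the branches $\name{\rho}_{1,k}$ and $\name{\rho}_{2,k}$ agree they are forced to separate into disjoint cones thereafter. As preparation I would use the assumed compatibility of $\bj_{w,u_1}(p_1), \bj_{w,u_2}(p_2)$ in $\underline{\bbQ}_w$ together with the $\lessdot$-completeness of the projections (Proposition~\ref{2q.45}) and the lifting in Proposition~\ref{3.10A}(2) to replace each $p_\ell$ by a stronger $p'_\ell\geq_{\underline{\bbQ}_{u_\ell}} p_\ell$ with $\bj_{w,u_1}(p'_1)=\bj_{w,u_2}(p'_2)=:p_0$. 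Since $\nor^0_i(\cF)=\nor^0_i(\cF\upharpoonleft w)$ by Proposition~\ref{3.10A}(1), this equality of projections makes the pair $(\cF_{\gc^{p'_1}_i},\cF_{\gc^{p'_2}_i})$ balanced (Definition~\ref{2q.11}) at every level $i$, and strongly balanced by the hypothesis $|u_1\setminus u_2|=|u_2\setminus u_1|\neq 0$.

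Next, because $2^{n_*(i)}<k^*_{i+1}$, each coordinate $\name{\rho}_{\ell,k}(i)\in {}^{n_*(i)}2$ fits into the rapid continuous reading of Proposition~\ref{2q.43}(c) when indexed as ``$\name{h}(i+1)$''. A standard diagonal fusion inside each $\underline{\bbQ}_{u_\ell}$, carried out symmetrically above $p_0$ so that the projections to $\underline{\bbQ}_w$ remain equal, then yields strengthenings of $p'_1, p'_2$ and functions $H^{k,\ell}_i\colon \bS_{u_\ell,i+1}\to {}^{n_*(i)}2$ such that every $\bar{y}\in\pos(p'_\ell)\cap\bS_{u_\ell,i+1}$ forces $\name{\rho}_{\ell,k}(i)=H^{k,\ell}_i(\bar{y})$. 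I now glue $p'_1, p'_2$ into $q_0\in\underline{\bbQ}_u$ as in Proposition~\ref{2q.52}: the coordinatewise union gives $\bar{x}_{q_0}$, and $\gc^{q_0}_i:=(\cF_{\gc^{p'_1}_i}*\cF_{\gc^{p'_2}_i},m_i)$; Proposition~\ref{x38}(2) together with balancedness ensure $\nor^0_i(\cF^{q_0}_i)\geq \nor^0_i(\cF_{\gc^{p'_1}_i})-1$, so $\bj_{u_\ell,u}(q_0)\geq p'_\ell$ and norms still tend to infinity.

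The main fusion is then carried out level-by-level. Fix a slowly growing $K(i)\to\infty$ with $K(i)<\nor_i(\gc^{q_0}_i)-3$ for all large $i$. At each level $i\geq\bi(q_0)$ I iterate Crucial Corollary~\ref{cruccor} for $k=0,1,\ldots,K(i)-1$: starting with the current pair $(\gc^{\cdot}_i\upharpoonleft u_1,\gc^{\cdot}_i\upharpoonleft u_2)$, which is balanced by induction, apply the corollary with $\alpha_\ell=\alpha_{\ell,k}$ and $H_\ell=H^{k,\ell}_i$ to get $\gd_1,\gd_2$ satisfying $\proj_w(\gd_1)=\proj_w(\gd_2)$ and $\nor_i(\gd_\ell)\geq \nor_i(\gc_\ell)-1$. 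The $w$-agreement makes $(\gd_1,\gd_2)$ balanced again, so the next iteration is legal; after all $K(i)$ steps set $\gc^q_i=(\cF^{\text{final}}_1*\cF^{\text{final}}_2,m_i)$. The resulting $q\in\underline{\bbQ}_u$ satisfies $p_\ell\leq \bj_{u_\ell,u}(q)$. For each $k$, once $i\geq i_k:=\min\{i:K(i)>k\}$ the Crucial Corollary conclusion at level $i$ combined with Proposition~\ref{2q.3}(5) says that if $H^{k,1}_i(\bar{y}\upharpoonleft u_1)=H^{k,2}_i(\bar{y}\upharpoonleft u_2)=\eta$, then the $t^*_{\alpha_{1,k}}$-cone and $t^*_{\alpha_{2,k}}$-cone above $\eta$ are disjoint. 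Since the branches $\name{\rho}_{\ell,k}$ pass through $\eta$ and lie in those cones at all higher levels, $\name{\rho}_{1,k}$ and $\name{\rho}_{2,k}$ disagree at every level past the first agreement (and thus on a cofinite set), which is precisely bounded intersection.

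The main obstacle is the careful bookkeeping of this fusion: one must guarantee that after each Crucial Corollary application at level $i$ the refined creatures still project identically to $\underline{\CR}^w_i$, so that the balancedness required to invoke the corollary for the next index $k$ is preserved, while composing up to $K(i)$ successive losses of $1$ in $\nor_i$ without collapsing. This is affordable only because the gap $\nor_i(\gc^{q_0}_i)\to\infty$ from the gluing step~3 is large enough to absorb $K(i)+2$ losses at every level, and because Crucial Corollary~\ref{cruccor}($\alpha$) was stated precisely to keep $\proj_w$ matched. Choosing $K(i)$ to dominate every fixed $k$ but stay well below $\nor_i(\gc^{q_0}_i)$ ensures $q$ is a legal condition in $\underline{\bbQ}_u$ with $\nor_i(\gc^q_i)\to\infty$, completing the construction.
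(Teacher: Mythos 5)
Your proposal is correct and follows essentially the same route as the paper: pass to $p'_1,p'_2$ with equal $\bj_{w,\cdot}$-projections via $\lessdot$-completeness, arrange continuous reading of the $\name{\rho}_{\ell,k}$, then at each level $i$ apply the Crucial Corollary~\ref{cruccor} a slowly increasing number of times to the $u_1$/$u_2$-pair while tracking the norm loss, and finally combine with Proposition~\ref{2q.3}(5) to conclude bounded intersection. The only organizational difference is that you glue to $q_0\in\underline{\bbQ}_u$ before iterating and then restrict back to the $u_\ell$-sides, whereas the paper keeps $\gc^{p_1}_i,\gc^{p_2}_i$ as a pair throughout and only forms the product $\cF_{\gd^1_i}*\cF_{\gd^2_i}$ at the end; this saves one superfluous round-trip through $*$ and $\upharpoonleft$ but makes no substantive difference.
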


\begin{proof}  
Without loss of generality
\begin{enumerate}
\item[$(\circledast)$] for $\underline{\bbQ}_{u_\ell}$, for each $j<\omega$
  the sequence $\name{\rho}_{\ell,j}$ can be read continuously above
  $p_\ell$; moreover for every large enough $i$, say $i\geq i_\ell(j)$ the 
  sequence $\name{\rho}_{\ell,j} \rest i$ can be read from
  $\name{\bar{x}}_{u_\ell}\rest i$.
\end{enumerate}
[Why? First by Proposition \ref{2q.45} there is $q_1$ such that $p_1
\le_{\underline{\bbQ}_{u_1}} q_1$ and  
\[(\forall q)[q_1\le_{\underline{\bbQ}_{u_1}} q\ \Rightarrow\ \bj_{w,u_1}(q),
\bj_{w,u_2}(p_2)\mbox{ are compatible in }\underline{\bbQ}_w].\] 
Second, by \ref{2q.9}+\ref{2q.43}, there is $p'_1\in\underline{\bbQ}_{u_1}$
satisfying $(\circledast)$ and such that $q_1\le_{\underline{\bbQ}_{u_1}}
p'_1$. Third, we may choose $q_2\ge_{\underline{\bbQ}_{u_2}} p_2$ such that  
\[(\forall q)[q_2 \le_{\underline{\bbQ}_{u_2}} q \Rightarrow \bj_{w,u_1}(p'_1),
\bj_{w,u_2}(q)\mbox{ are compatible in }\underline{\bbQ}_w].\]
Fourth, by \ref{2q.43}, there is $p'_2\in\underline{\bbQ}_{u_2}$ satisfying
$(\circledast)$ and such that $q_2 \le_{\underline{\bbQ}_{u_2}} p'_2$.  Clearly
$(p'_1,p'_2)$ are as required.]\\
Passing to stronger conditions if needed we may also require that
$\bi(p_1)=\bi(p_2)=\bi$, $\bj_{w,u_1}(p_1)=\bj_{w,u_2}(p_2)$ (note $(*)_4$
from the proof of \ref{2q.45}), $|u|<n_*(\bi-1)$ and
$\nor_i(\gc^{p_\ell}_i)>100$ for $i\geq\bi$. Without loss of generality,
letting $i(j)=\max\{i_1(j),i_2(j)\}$, it satisfies $i(0)=\bi$,
$i(j+1)>i(j)+10$ and 
\[\nor_i(\gc_i^{p_1})=\nor_i(\gc_i^{p_2})>2j+2\qquad \mbox{ for }i\geq
i(j).\]   
Fix $i\geq \bi$ for a moment. Let $k$ be such that $i(k)\leq i<i(k+1)$. We
shall shrink $\gc^{p_1}_i,\gc^{p_2}_i$ in order to take care of $(\alpha_{1,m},
\name{\rho}_{1,m},\alpha_{2,m},\name{\rho}_{2,m})$ for $m\leq k$. By
$(\circledast)$ from the beginning of the proof we know that 
\begin{enumerate}
\item[(i)] if $\bar{y}\in \bS_{u_\ell,i+1}\cap \pos(p_\ell)$,\\ 
then the condition $(p_\ell)^{[\bar{y}]}\in\underline{\bbQ}_{u_\ell}$
decides $\name{\rho}_{\ell,m}(i)$ for $m\leq k$, say $(p_\ell)^{[\bar{y}]}
\Vdash_{\underline{\bbQ}_{u_\ell}}$``
$\name{\rho}_{\ell,m}(i)=H_{\ell,m}(\bar{y})$ '', where
$H_{\ell,m}:\bS_{u_\ell,i+1}\longrightarrow {}^{n_*(i)}2$.
\end{enumerate}
Use Crucial Corollary \ref{cruccor} $(k+1)$ times to choose $\gd^1_i \in
\underline{\Sigma}(\gc^{p_1}_i)$ and $\gd^2_i\in
\underline{\Sigma}(\gc^{p_2}_i)$ such that:
\begin{enumerate}
\item[(ii)] $\proj_w(\gd^1_i)=\proj_w(\gd^2_i)$,
\item[(iii)]  $\nor_i(\gd^\ell_i)\geq \nor_i(\gc^{p_\ell}_i)-(k+1)$ (for
  $\ell=1,2$), 
\item[(iv)] {\em if\/} $h\in\set(\cF_{\gd^1_i}*\cF_{\gd^2_i})$, $\bar{x}\in\bS_{u,i}$,
  $\bar{y}=\suc_{\bar{x}}(h)\in \bS_{u,i+1}$, $m\leq k$, $\ell=1,2$ and
  $\eta_\ell=H_{\ell,m}(\bar{y}\upharpoonleft u_\ell)\in {}^{n_*(i)}2$, {\em
    then} 
\[\eta_{1,m}=\eta_{2,m}\quad\Rightarrow\quad \big(g_{y_i}(\alpha_{1,m})^{-1}
\circ f_{y_i}(\alpha_{1,m}))(\eta_{1,m})\neq
(g_{y_i}(\alpha_{2,m})^{-1}\circ f_{y_i}(\alpha_{2,m})) (\eta_{2,m}).\] 
\end{enumerate}
After this construction is carried out for every $i\geq \bi$ we define 
\begin{itemize}
\item $q_\ell=(\bar{x}_{p_\ell},\bar{\gd}^\ell)$, where $\bar{\gd}^\ell
  =\langle \gd^\ell_i: i\in [\bi,\omega)\rangle$, $\ell=1,2$,
\item $q=(\bar{x}_{p_1}\cup \bar{x}_{p_2},\bar{\gd})$, where $\bar{\gd}=
  \langle \gd_i:i\in [\bi,\omega)\rangle$, $\cF_{\gd_i}=\cF_{\gd^1_i}*
  \cF_{\gd^2_i}$, $m_{\gd_i}=m_{\gd^1_i}=m_{\gd^2_i}$.
\end{itemize}
It follows from (iii) (and the choice of $i(j)$) that
$q_\ell\in\underline{\bbQ}_{u_\ell}$ and, by \ref{x38}(2), $q\in
\underline{\bbQ}_u$. Plainly $p_\ell\leq_{\underline{\bbQ}_{u_\ell}} q_\ell
\leq_{\underline{\bbQ}_{u_\ell}} \bj_{u_\ell,u} (q)$. 

Now, let $k<\omega$ and consider $i\geq i(k)$. It follows from
(iv)+\ref{2q.3}(5) that for each $\bar{x}\in\bS_{u,i}\cap\pos(q)$ and $h\in
\set(\cF_{\gd_i})$, if $\bar{y}=\suc_{\bar{x}}(h)$ and $\eta_{\ell,k}=
H_{\ell,k}(\bar{y}\upharpoonleft u_\ell)$, then 
\[\eta_{1,k}=\eta_{2,k}\quad \Rightarrow\quad q^{[\bar{y}]}
\Vdash_{\underline{\bbQ}_u} \mbox{`` }\{\rho:\eta_{1,k}
<_{t_{\name{\varkappa}_{\alpha_{1,k}}}} \rho\}\cap \{\rho:\eta_{2,k}
<_{t_{\name{\varkappa}_{\alpha_{2,k}}}} \rho\}=\emptyset\mbox{ ''}.\]
Since $q^{[\bar{y}]}\Vdash_{\underline{\bbQ}_u}\mbox{`` }
\name{\rho}_{\ell,k}(i)=\eta_{\ell,k}\mbox{ ''}$ (for $\ell=1,2$) we may
conclude that
\[q^{[\bar{y}]}\Vdash_{\underline{\bbQ}_u}\mbox{`` either }
\name{\rho}_{1,k}(i)\neq \name{\rho}_{2,k}(i)\mbox{ or }(\forall j>i)(
\name{\rho}_{1,k}(j)\neq \name{\rho}_{2,k}(j))\mbox{ ''.}\]
Hence immediately we see that $q$ is as required in the assertion of the
lemma. 
\end{proof}

\begin{remark}
\begin{enumerate}  
\item If we can deal only with one case (i.e., one $k$ in clause (c) of
  \ref{3c.34}), we have to use $\cA=\bT^*_\omega$, not ``any uncountable''
  $\cA\subseteq\bT^*_\omega$. But actually it is enough in \ref{3c.34} to
  deal with finitely many pairs. 
\item We can prove in \ref{3c.34} that there is a pair $(p'_1,p'_2)$ such
  that:  
\begin{enumerate}
\item[(a)] $p_\ell\le_{\bbQ_{u_\ell}} p'_\ell$ for $\ell=1,2$,
\item[(b)] $\bj_{w,u_1}(p'_1),\bj_{w,u_2}(p'_2)$ are compatible,
\item[(c)] if $p\in\bbQ_u$ satisfies $p'_\ell\le_{\bbQ_{u_\ell}}
  \bj_{u,u_\ell}(p)$, then $p$ is as required.
\end{enumerate}
\end{enumerate}
\end{remark}

If $u=\{\alpha\}$ is a singleton, then considering $\OB^u_i,\bS_{u,i},
\bS_u,\pos^u_i,\wpos^u_i,\underline{\bbQ}_u$ we may ignore $u$ (and
$\alpha$) in a natural way arriving to the definitions of $\OB_i,\bS_i,
\bS,\pos_i,\wpos_i,\underline{\bbQ}$, respectively. Let
$\varkappa:\bS_\omega\longrightarrow \bT_\omega$ be the mapping given by
$\varkappa(\bar{x})=\langle f_{x_i}:i<\omega\rangle$ (on $\bT$ see
Definition \ref{4d.3}(2), concerning $\varkappa$ compare Definition
\ref{2q.1}(G)). 
\medskip

The following proposition finishes the proof of Theorem \ref{4d.1}. 

\begin{proposition}
\label{3c.37} 
Let $N_*\prec(\cH(\beth^+_7),\theta)$ be countable.
\begin{enumerate}
\item There is a perfect subtree $\bS^*\subseteq\bS$ (so $\bS^*_\omega =
  \lim_\omega(\bS^*)\subseteq\bS_\omega$) such that: 

if $n<\omega$, $\bar{x}_\ell\in\bS^*_\omega$ for $\ell<n$ are pairwise
distinct then $(\bar{x}_0,\ldots,\bar{x}_{n-1})$ is a generic for
$\underline{\bbQ}_n$ over $N_*$.
\item Moreover, $\varkappa[\bS^*_\omega]\subseteq\bT_\omega$ is strongly pbd
  (see Definition \ref{4d.7}(3)) and $\arcl\{A_{\varkappa(\bar{x})}:
  \bar{x}\in \bS^*_\omega\}$ is Borel.  
\end{enumerate}
\end{proposition}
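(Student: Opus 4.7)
The plan is to build, by induction on $n<\omega$, a symmetric fusion sequence $\langle p_n:n<\omega\rangle$ with $p_n\in\underline{\bbQ}_{{}^n 2}$, following the scheme of Discussion \ref{0z.19}(b)$_1$. The key symmetry requirement I impose is that for every lifting $h:{}^n 2\to{}^{n+1}2$ satisfying $h(\rho)\rest n=\rho$ for all $\rho$, the isomorphism $\hat h$ from Definition \ref{2q.8}(L) sends $p_n$ onto $\bj_{{\rm Rang}(h),{}^{n+1}2}(p_{n+1})$. I also arrange $\bi(p_n)$ so that $|{}^n 2|<n_*(\bi(p_n)-1)$, and the norms $\nor_i(\gc^{p_n}_i)$ to diverge fast enough to absorb all shrinkings performed at each stage. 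From such a sequence, for each $\eta\in{}^\omega 2$ the $(\eta\rest n)$-coordinate of $\bar x_{p_n}$ stabilises as $n$ grows, producing a limit $\bar x_\eta\in\bS_\omega$; then $\bS^*\subseteq\bS$ is the collection of initial segments of these $\bar x_\eta$'s, clearly a perfect subtree.

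First I fix an enumeration $\langle(\cI_k,\bar\tau_k):k<\omega\rangle\in N_*$ listing all open dense $\cI\in N_*$ of every $\underline{\bbQ}_u$ (for finite $u\subseteq{\rm Ord}$), together with all finite lists $\bar\tau\in N_*$ of the kind needed by Lemma \ref{3c.34}. To pass from $p_n$ to $p_{n+1}$ I form the $\hat h$-copies of $p_n$ for every lifting $h:{}^n 2\to{}^{n+1}2$ and amalgamate them into a single symmetric $q_n\in\underline{\bbQ}_{{}^{n+1}2}$ by iterating Proposition \ref{2q.52}; the inductive symmetry of $p_n$ makes the $\hat h$-copies pairwise compatible on overlaps (for distinct liftings $h,h'$, injectivity and the relation $h(\rho)\rest n=\rho$ force $h(\rho)=h'(\rho')\Rightarrow\rho=\rho'$), exactly what Proposition \ref{2q.52} needs. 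Next, to meet $\cI_n\subseteq\underline{\bbQ}_u$, I use the $\lessdot$-completeness of $\bj_{u',{}^{n+1}2}$ from Proposition \ref{2q.45} (with $u'\subseteq{}^{n+1}2$ an order-preserving copy of $u$) to strengthen into $\cI_n$, lift back, and re-symmetrise by Proposition \ref{2q.52}. Finally, for every pair of distinct $\alpha_1,\alpha_2\in{}^{n+1}2$ and every $\bar\tau_k$ with $k\le n$, I invoke Crucial Corollary \ref{cruccor} repeatedly, exactly as in the proof of Lemma \ref{3c.34}, to force bounded intersection of the named branches.

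To verify (1), fix pairwise distinct $\bar x_{\eta_0},\ldots,\bar x_{\eta_{n-1}}\in\bS^*_\omega$ and choose $m$ so large that $\eta_0\rest m,\ldots,\eta_{n-1}\rest m$ are pairwise distinct. On the support $u=\{\eta_j\rest m:j<n\}\subseteq{}^m 2$ the projected sequence $\langle\bj_{u,{}^k 2}(p_k):k\ge m\rangle$ is a fusion in $\underline{\bbQ}_u$ meeting every dense subset of $\underline{\bbQ}_u$ that lies in $N_*$, so via the order-isomorphism $u\cong n$ the tuple $(\bar x_{\eta_0},\ldots,\bar x_{\eta_{n-1}})$ is $\underline{\bbQ}_n$-generic over $N_*$. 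For (2), any pair of branches witnessing a failure of $\otimes^{\rm wk}_{\bT^*_\omega}$ would be definable in $(\bbN,A_{\varkappa(\bar x_{\eta_0})},\ldots,A_{\varkappa(\bar x_{\eta_{n-1}})})$ for pairwise distinct $\bar x_{\eta_j}\in\bS^*_\omega$; by genericity, the defining formulas with parameters translate into a $\underline{\bbQ}_u$-name list appearing on my enumeration, which was handled at some finite stage, contradicting the failure. Continuous reading of names (Proposition \ref{2q.43}) together with the explicit perfect structure of $\bS^*$ renders $\arcl\{A_{\varkappa(\bar x)}:\bar x\in\bS^*_\omega\}$ Borel.

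The hard part is preserving the full symmetry throughout the combined operations of duplication, dense-set meeting and branch-intersection shrinking, while keeping $\nor_i(\gc^{p_n}_i)\to\infty$. Each invocation of Proposition \ref{2q.52} or Crucial Corollary \ref{cruccor} costs a bounded amount of norm, but re-symmetrising a non-symmetric strengthening may require up to $2^{2^n}$ successive applications of \ref{2q.52} at stage $n$; that this fits within the norm budget relies on the fast growth of $\bi(p_n)$ together with the strong $k^*_i$-bigness and halving from Proposition \ref{2q.23}(d,e).
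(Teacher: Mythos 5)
Your construction follows the sketch in Discussion \ref{0z.19}(b)$_1$ (fusion over the full binary tree ${}^n 2$ with full symmetry), but the paper's actual proof of \ref{3c.37} takes the alternative hinted at parenthetically in that same Discussion: ``one duplicating at a time.'' Concretely, the paper fixes a perfect tree $\cT\subseteq{}^{\omega>}2$ in which \emph{exactly one} node splits at each level, so $|\cT_{i+1}|=|\cT_i|+1$, and builds a fusion sequence $p_i\in\bbQ_{n_i}$ where $n_i=|\cT_i|$. Passing from level $i$ to level $i+1$ then requires amalgamating only \emph{one} $\Delta$--system pair of copies of $p_i$ (the ranges $u_{i,0},u_{i,1}$ differ in a single coordinate), which is exactly a single application of \ref{2q.52}; the remaining work at stage $i$ (meeting the listed dense sets via (ii) and applying \ref{3c.34}/\ref{cruccor} via (iii)) is a bounded number of norm-costing steps, and the paper's clause (i) asks only for an \emph{inequality} $\hat g_{i,\ell}(p_i)\le\bj_{u_{i,\ell},n_{i+1}}(p_{i+1})$. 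This sidesteps the whole ``re-symmetrisation'' problem you identify at the end.

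There are in fact two genuine gaps in your version. First, your ``key symmetry requirement'' is stated as an \emph{equality} $\hat h(p_n)=\bj_{\mathrm{Rang}(h),{}^{n+1}2}(p_{n+1})$ for all $2^{2^n}$ liftings $h$. For the $\bar x$--parts this is harmless, but for the creatures it is generally not achievable: requiring the marginal $\cF_{\gc^{p_{n+1}}_i}\upharpoonleft\mathrm{Rang}(h)$ to equal $\hat h(\cF_{\gc^{p_n}_i})$ for \emph{every} lifting $h$ forces $\cF_{\gc^{p_n}_i}$ to factor as a product of its singleton marginals (check with the product candidate and with indicator functions), which already fails after the very first application of \ref{3c.30}/\ref{3c.26}. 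You need to relax this to ``$\le$ with controlled norm loss,'' which brings you to the second gap: re-symmetrising a non-symmetric strengthening. This is not just ``up to $2^{2^n}$ applications of \ref{2q.52}'': after strengthening along one order-embedded copy of $u$ you must restore the condition's invariance under all liftings (and, since (ii) must hold for every $u\subseteq{}^{n+1}2$ of the relevant size, effectively under the permutations of ${}^{n+1}2$), and \ref{2q.52} only handles a single $\Delta$--system pair with $\OP_{u_1,u_2}$ mapping one condition to the other, so iterating it to merge many non-identical copies is not directly licensed by the statement. That the norm budget absorbs all this is asserted, not checked, and it is precisely what the paper's one-split-at-a-time tree is designed to avoid having to check. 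Your verification of parts (1) and (2) given the fusion is fine and matches the paper, but the fusion construction itself needs to be replaced by (or carefully reduced to) the single-split scheme.
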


\begin{proof}  
By \ref{2q.43} and \ref{2q.52} and (for part (2)) by \ref{3c.34}.  In
details, let $\cT$ be a perfect subtree of ${}^{\omega>}2$ such that in each
level only in one node we have splitting and let $\cT_i = \{\eta\in
\cT:\eta$ of the $i$-th level$\}$. 

Let $h_i:|\cT_i|\longrightarrow \cT_i$ be a bijection such that 
\[m'<m''< n_i\ \Leftrightarrow\ h_i(m') <_{\text{lex}} h_i(m''),\]
where $n_i=|\cT_i|$. Let $\langle(m_j,k_j,\name{\rho}_j):j<\omega\rangle$
list all the triples $(m,k,\name{\rho})$ satisfying: $m<\omega$, $k<m$ and
$\name{\rho}$ is a $\bbQ_{m \setminus \{k\}}$--name of a branch of
$\name{t}_k$ such that $\name{\rho}$ belongs to $N_*$.

Let $\eta_i$ be the unique member of $\cT_i$ such that $\{\eta_i\conc
\langle 0 \rangle,\eta_i \conc\langle 1 \rangle\}\in \cT_{i+1}$. For
$\ell=0,1$ let  $f_{i,\ell}:\cT_i\longrightarrow \cT_{i+1}$ be such that 
\[[\eta\in \cT_i\setminus \{\eta_i\}\quad \Rightarrow\quad f_{i,\ell}(\eta)
\rest i = \eta]\qquad\mbox{ and }\qquad
f_{i,\ell}(\eta_i)=\eta_i\conc\langle\ell\rangle.\]  
Let $u_{i,\ell}=\text{ Rang}(g_{i,\ell})$ where $g_{i,\ell}=h^{-1}_{i+1}
\circ f_{i,\ell} \circ h_i$.  For an order preserving function $g$ from the
finite $u \subset \text{Ord}$ into $\text{Ord}$ let $\hat{g}$ be the
isomorphism from $\bbQ_u$ onto $\bbQ_{g[u]}$ induced by $g$.

Let $\langle \cI_{n,i}:i<\omega\rangle$ list all the dense open subsets of
$\bbQ_n$ which belong to $N_*$. By induction on $i<\omega$ choose $p_i$ such
that if $\ell \in \{1,2\}$ then (recalling $\bj_{u_{i,\ell},n_j}$ is a
complete projection from $\bbQ_{n_j}$ onto $\bbQ_{u_{i,\ell}}$) we have 
\begin{enumerate}
\item[(i)] $p_i\in\bbQ_{n_i}$, $\hat{g}_{i,\ell}(p_i)
  \le_{\bbQ_{u_{i,\ell}}} \bj_{u_{i,\ell},n_{i+1},}(p_{i+1})$ for
  $\ell=0,1$. 
\item[(ii)] If $u\subseteq n_i$ and $h^*_u$ is $\OP_{u,|u|}$, i.e., the
  order preserving function from $\{0,\ldots,|u|-1\}$ onto $u$, and
  $\hat{h}^*_u$ is defined as above and $k<i$, then $\bj_{u,n_i}(p_i) \in
  \bbQ_u$ belongs to $\hat{h}^*_u(\cI_{|u|,k})$.
\item[(iii)] Assume that for $\ell=0,1$ the objects $j_\ell<\omega$, $u_\ell
  \subseteq \cT_i$ satisfy 
\[\eta_i\in u_\ell,\ |u_\ell|=m_{j_\ell},\ h^*_{u_\ell}(k_{j_\ell})=
  h^{-1}_i(\eta_i)\]
and let $\name{\rho}_\ell=\hat{g}_{i,\ell}(\hat{h}^*_{u_\ell}(
\name{\rho}_{j_\ell}))$ (so it is a $\bbQ_{n_{i+1}}$--name for a branch of
$\name{t}_{g_{i,\ell}(h^*_u(\eta_i))}$). \\
{\em Then\/} $\Vdash_{\bbQ_{n_{i+1}}}$`` the branches $\name{\rho}_0$ of
$\name{t}_{f_{i,0}(\eta_i)}$ and $\name{\rho}_1$ of
$\name{t}_{f_{i,1}(k_{\eta_i})}$ have bounded intersection ''. 
\end{enumerate}
This is straightforward.
\end{proof}

\begin{theorem}
\label{3c.47}
\begin{enumerate}
\item There is a Borel arithmetically closed set $\bB\subseteq \cP(\omega)$
  such that there is no arithmetically closed 2-Ramsey ultrafilter on it.
\item Moreover, there is a Borel\footnote{to eliminate it we have to
   force over $\bbN$} $\cA_*\subseteq \cB$ such that for every uncountable
 $\cA'\subseteq A$, there is no definably closed minimal ultrafilter on the
 arithmetic closure of $\arcl(\cA')$ of $\cA'$.
\item We can demand that above each $\arcl(\cA')$ is a standard system.
\end{enumerate}
\end{theorem}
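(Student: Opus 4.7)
The proof is a direct combination of Proposition \ref{3c.37} and Theorem \ref{4d.17}. First I apply Proposition \ref{3c.37} to a countable $N_* \prec (\cH(\beth^+_7),\in)$ to obtain a perfect subtree $\bS^* \subseteq \bS$ such that $\bT^*_\omega := \varkappa[\bS^*_\omega]$ is strongly pbd. I then put $\cA_* := \{A_\varkappa : \varkappa \in \bT^*_\omega\}$ and $\bB := \arcl(\cA_*)$; by \ref{3c.37}(2), $\bB$ is Borel; by construction it is arithmetically closed; and $\cA_*$ itself has size continuum (hence is uncountable) because $\bS^*_\omega$ is perfect.

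For parts (1) and (2), the key point is that a 2-Ramsey ultrafilter is weakly definably closed by Definition \ref{0z.7}(5), and the ``definably closed'' ultrafilter appearing in (2) is exactly the ``weakly definably closed'' of \ref{0z.7}(5) (this is Kirby's terminology, as recorded there). But for every uncountable $\cA' \subseteq \cA_*$, Theorem \ref{4d.17}(2) applied to the pbd set $\bT^*_\omega$ rules out the existence of any weakly definably closed ultrafilter on $\arcl(\cA')$. Taking $\cA' = \cA_*$ yields (1); taking an arbitrary uncountable $\cA' \subseteq \cA_*$ and noting that a definably closed minimal ultrafilter is in particular weakly definably closed yields (2).

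For part (3) we must realize each $\arcl(\cA')$ as the standard system of some nonstandard model of $\PA$. I would start from the model of $\Th(\bbN_{\cA'})$ supplied by \cite{Sh:73} (already invoked in Theorem \ref{4d.1}(3)) in which definable trees have no undefinable branches; in that model the standard system automatically contains $\arcl(\cA')$, since every $A_\varkappa \in \cA'$ is interpreted by its predicate $P_\varkappa$. A L\"owenheim--Skolem reduction, exploiting the Borel parameterization of $\bT^*_\omega$ to keep track of exactly which subsets of $\bbN$ become definable, then carves out a submodel whose standard system is precisely $\arcl(\cA')$, as opposed to some larger Scott set.

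The main obstacle is part (3): realizing an uncountable Scott set \emph{exactly} (not merely as a subset of) a standard system is delicate in ZFC without CH. Here, however, the explicit Borel coding of $\cA_*$ through $\bT^*_\omega$, combined with the omitting-types technology implicit in \cite{Sh:73}, should provide enough uniform control to carry out the realization without additional set-theoretic hypotheses; this mirrors the footnoted remark in \ref{3c.47}(2) that eliminating the Borel assumption would require forcing over $\bbN$.
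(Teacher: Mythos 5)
Your treatment of parts (1) and (2) is sound and does reach the same conclusion as the paper, although by a slightly different organization. You obtain a strongly pbd $\bT^*_\omega$ from Proposition~\ref{3c.37}, set $\cA_*=\{A_\varkappa:\varkappa\in\bT^*_\omega\}$ and $\bB=\arcl(\cA_*)$, and then invoke Theorem~\ref{4d.17}(2) to exclude any weakly definably closed ultrafilter on $\arcl(\cA')$ for uncountable $\cA'\subseteq\cA_*$; since (by Definition~\ref{0z.7}(5)) 2-Ramsey implies weakly definably closed and ``definably closed'' is Kirby's name for the same thing, parts (1) and (2) follow. The paper's own proof of \ref{3c.47}(1,2) does not cite \ref{4d.17}: instead it re-runs the underlying argument directly against a putative minimal ultrafilter $D$, defining the branch $\langle b^t_i:i<\omega\rangle$, using 2-Ramseyness to make it definable, using minimality to extract a set $C_t\in D$ on which the level function $g_t$ is injective, and then applying a $\Delta$-system argument and Lemma~\ref{3c.34} to force $C_{t_1}\cap C_{t_2}$ finite, a contradiction with $C_{t_1},C_{t_2}\in D$. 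Your citation-based route is cleaner and arguably what \ref{4d.17} was set up for; the paper's re-derivation exploits the minimality hypothesis more explicitly (via $C_t$). Either way, (1) and (2) are correctly established.

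Part (3), however, is a genuine gap. The paper's proof of \ref{3c.47}(3) is a one-line appeal to the iterated creature forcing of \cite{Sh:F834} with $\lambda\ge\beth_{\omega_1}$, which is a forcing construction over a model of set theory designed to realize the relevant arithmetic closure exactly as a standard system. Your alternative --- start from the $\Th(\bbN_{\cA'})$-model of \cite{Sh:73} (where the standard system of the $\PA$-reduct merely \emph{contains} $\arcl(\cA')$) and then ``carve out'' by a L\"owenheim--Skolem reduction a submodel whose standard system is \emph{exactly} $\arcl(\cA')$ --- is not a proof. Passing to an elementary submodel of the $\PA$-reduct has no reason to shrink the standard system down to $\arcl(\cA')$: the standard system is determined by which reals are coded by (nonstandard) elements, and an arbitrary elementary submodel will code reals outside $\arcl(\cA')$. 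For $|\cA'|\le\aleph_1$ the result follows from the Knight--Nadel theorem that every Scott set of size $\le\aleph_1$ is a standard system, but you do not cite that, and for $|\cA'|>\aleph_1$ the general problem (whether every Scott set is a standard system) is open in ZFC, so the ``Borel coding provides enough uniform control'' claim needs real justification. Your appeal to the footnote is also misplaced: that footnote concerns removing the Borel hypothesis on $\cA_*$, not the realization-as-standard-system claim in (3). A correct proof of (3) should follow the paper's route through \cite{Sh:F834} (or supply a substitute forcing argument), not an omitting-types/L\"owenheim--Skolem sketch.
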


\begin{proof}
(1) and (2)\quad Let $\cA=\bT^*_\omega$ be as in the proof of \ref{3c.37}
and let $\cB$ be the arithmetic closure $\arcl(\cA)$ of $\cA$. For every
$A_t\in \cA$ there towards contradiction assume $D$ is a $\bB$--minimal
ultrafilter where $\name{B}=\arcl(\cA')$, $\cA' \subseteq \cA$ is
uncountable. 

Now for every $A_t\in\cA'$, $(\bbN,<_t)$ is a tree with finite levels (hence
finite splittings), a root and the set of levels is $\bbN$. For every
$i<\omega$ the set $\{n<\omega$: in $<^*_t$ the level of $n$ is $<i\}$ is
finite and hence its compliment belongs to $D$. The rest is divided to
$\langle\{m:b \le^*_t m\}:b$ is of level exactly $i$ for $<^*_t\rangle$.
This is a finite division hence for some unique $b=b^t_i$ of level $i$ such
that $\{m:b \le^*_t m\} \in D$. As $D$ is a 2-Ramsey ultrafilter
\begin{enumerate}
\item[(i)]  $\langle b^t_i:i < \omega\rangle$ is definable in $\bbN_{\cA'}$.
\end{enumerate}
We define a function $g_t$ on $\bbN$ by $g_t(c)=\max\{i:b^t_i\le_t
c\}$. Again 
\begin{enumerate}
\item[(ii)] $g_t$ is definable in $\bbN_{\cA'}$.
\end{enumerate}
As $D$ is minimal there is $C_t\subseteq\bbN$ definable in $\bbN_{\cA'}$ and
such that
\begin{enumerate}
\item[(iii)] $g_t\rest C_t$ is one-to-one.
\end{enumerate}
Let $C_t$ be the first order definable in $\bbN_{\cA_t}$ where
$\cA_t\subseteq \cA'$ is finite, $t\in \cA_t$ for simplicity and so is the
set $\{b^t_i:i < \omega\}$. As each $\bbQ_u$ is ${}^\omega \omega$--bounding
and we can further shred $c_t$ below there is $h_*\in N_*$ such that [recall
we are forcing over the countable $N_*\prec (H(\chi),\in)$, so our $\cB$ is
$\bigcup\{\cP(\omega)\cap N[t_0,\ldots,t_{n-1}]:t_\ell \in T^*_\omega\}$]
such that 
\begin{enumerate}
\item[(iv)] $h_*\in {}^\omega\omega$ is increasing, $h_*(0)=0$, and 
\item[(v)]  if $c\in C_t$ and $g_t(c)<_t h_*(i)$ then $c<_\bbN h_*(i+1)$.
\end{enumerate}
Without loss of generality now by the infinite $\Delta$--system for finite
sets for some $t_1 \ne t_2$ we have $\{t_1,t_2\}\cap
(\cA_{t_1}\cap\cA_{t_2}) = \emptyset$, etc. 

Moreover, replacing $\cA_{t_1}\cup A_2$, $\cA_1,\cA_2,t_1,t_2$ by $u=u_1\cup
u_2$, $u_1,u_0$, $\alpha_1\in u_1\setminus u_2$, $\alpha_2\in u_2\setminus
u_1$ we have the situation in \S2 by similar proof. We get $C_{t_2}\cap
C_{t_2}$ is finite, but both are in an ultrafilter, so we are done. 
\medskip

(3) We let $\bbQ$ be as in \cite{Sh:F834} for $\lambda \ge
\beth_{\omega_1}$, use what is proved there.   
\end{proof}

%

\end{document}